\documentclass[a4paper, 11pt]{article}

\usepackage{xcolor}
\usepackage{amsmath,amsfonts,amssymb} % for math
\usepackage{enumitem}
\usepackage{amsthm} % theorems
\usepackage{bbm}
\usepackage{graphicx,caption,subcaption} % figures and subfigures
\usepackage{fullpage}

\allowdisplaybreaks % otherwise the align environment is stupid.

% this next bit makes theorems play nice with parskip
\begingroup
    \makeatletter
    \@for\theoremstyle:=definition,remark,plain\do{%
        \expandafter\g@addto@macro\csname th@\theoremstyle\endcsname{%
            \addtolength\thm@preskip\parskip
            }%
        }
\endgroup

\parindent=0pt
\parskip=3pt

\newtheorem{thm}{Theorem}[section]
\newtheorem{cor}[thm]{Corollary}

\newtheorem{rem}[thm]{Remark}
\newtheorem{claim}[thm]{Claim}
\newtheorem{lemma}[thm]{Lemma}

\newtheorem{definition}[thm]{Definition}

\newtheorem{subclaim}{Subclaim}

\begin{document}

\title{\vspace{-0.5in} Ore-type conditions for existence of a jellyfish in a graph}

\author{{{Jaehoon Kim}}\thanks{
\footnotesize {KAIST, Daejeon, South Korea. E-mail: \texttt {jaehoon.kim@kaist.ac.kr}.
Research is supported by the Fulbright Visiting Scholar Fellowship and by the National Research Foundation of Korea (NRF) grant funded by the Korean government(MSIT) No. RS-2023-00210430. 
}}
\and
{{Alexandr Kostochka}}\thanks{
\footnotesize {University of Illinois at Urbana--Champaign, Urbana, IL 61801.
 %and Sobolev Institute of Mathematics, Novosibirsk 630090, Russia. 
 E-mail: \texttt {kostochk@illinois.edu}.
 Research %%% of this author
is supported in part by  NSF  Grant DMS-2153507 and by Campus Research Board Award RB24000 of the University of Illinois Urbana-Champaign.
}}
\and
{{Ruth Luo}}\thanks{
\footnotesize {University of South Carolina, Columbia, SC 29208. E-mail: \texttt {ruthluo@sc.edu}.
}}
}

\date{ \today}
\maketitle

\vspace{-0.3in}

\begin{abstract}
The famous Dirac's Theorem states that for each $n\geq 3$ every $n$-vertex graph $G$ with minimum degree 
$\delta(G)\geq n/2$ has  a hamiltonian cycle. When $\delta(G)< n/2$, this cannot be guaranteed, but the existence of some other specific subgraphs can be provided. Gargano,  Hell,  Stacho and  Vaccaro proved that every connected $n$-vertex graph $G$ with $\delta(G)\geq (n-1)/3$ contains a spanning {\em spider}, i.e., a spanning tree with at most one vertex of degree at least $3$. Later, Chen,  Ferrara, Hu,   Jacobson and  Liu proved the stronger (and exact) result that for $n\geq 56$ every connected $n$-vertex graph $G$ with $\delta(G)\geq (n-2)/3$ contains a spanning {\em broom}, i.e.,  a spanning spider obtained by joining the center of a star to an endpoint of a path. They also showed that a $2$-connected graph $G$ with $\delta(G)\geq (n-2)/3$ and some additional properties contains a spanning {\em jellyfish} which is a graph obtained by gluing the center of a star to a vertex in a cycle disjoint from that star. Note that every spanning jellyfish contains a spanning broom.

The goal of this paper is to prove an exact Ore-type bound which guarantees the existence of a spanning jellyfish: We prove that if $G$ is a $2$-connected graph on $n$ vertices such that every non-adjacent pair of vertices $(u,v)$ satisfies $d(u) + d(v) \geq \frac{2n-3}{3}$, then $G$ has a spanning jellyfish. As  corollaries, we obtain strengthenings   of two results by Chen et al.: a minimum degree condition guaranteeing the existence of a spanning jellyfish, and an Ore-type sufficient condition for the existence of a spanning broom. The corollaries are sharp for infinitely many $n$.
One of the main ingredients of our proof is a modification of the Hopping Lemma due to Woodall.

%
%The goal of this paper is to prove that for $n\geq 13$, every $2$-connected $n$-vertex graph $G$ with $\delta(G)\geq (n-1)/3$ contains a spanning jellyfish. The bound $(n-1)/3$ is exact. Moreover, we prove the somewhat stronger bound of Ore-type: For $n\geq 13$, every $2$-connected $n$-vertex graph $G$ with $\sigma_2(G)\geq (2n-3)/3$ contains a spanning jellyfish, where $\sigma_2(G)$ is the minimum sum of degrees of two non-adjacent vertices in $G$. This also yields a corresponding Ore-type condition for the existence of a spanning broom in a connected $n$-vertex graph.

\medskip\noindent
{\bf{Mathematics Subject Classification:}}  05C07, 05C38, 05C35.\\
{\bf{Keywords:}}  Minimum degree, Dirac Theorem, Ore-type problems.
\end{abstract}

\section{Introduction}

\subsection{Terminology and known results}

The {\em degree} $d_G(v)$ of a vertex $v$ in a graph $G$ is the number of edges containing $v$. The {\em minimum degree}, $\delta(G)$, is the minimum over degrees of all vertices of $G$, and the  {\em maximum degree}, $\Delta(G)$, is the corresponding maximum. 
%By $\sigma_2(G)$ we denote the minimum of the sum $d_G(u)+d_G(v)$ over all pairs $(u,v)$ of nonadjacent vertices in $G$. 
 When there is no confusion, we often drop subscripts and  write $d(v)$ for $d_G(v)$. 
%The {\em circumference}, $c(G)$, is the length of a longest cycle in $G$.

A {\em hamiltonian cycle} (respectively, path)  in a graph is a cycle  (respectively, path)  that visits every vertex. %We say a graph is {\em hamiltonian} if it contains a hamiltonian cycle. 
 Sufficient conditions for existence of hamiltonian cycles and paths in graphs have been well studied. 
A central result in graph theory is Dirac's Theorem~\cite{D} which states that for $n\geq 3$, every $n$-vertex graph $G$ with 
$\delta(G)\geq n/2$ has a hamiltonian cycle and every $n$-vertex graph $G$ with 
$\delta(G)\geq (n-1)/2$ has a hamiltonian path.

Generalizing Dirac’s theorem, the minimum degree conditions guaranteeing spanning structures have been investigated for extensive list of graphs, such as spanning trees with bounded maximum degree, powers of Hamilton cycles, graphs with small bandwidths and so on. Note that the required minimum degree conditions for all these results are at least $(n-1)/2$ as the disjoint union $K_{\lfloor n/2 \rfloor}\cup K_{\lceil n/2 \rceil}$ is a disconnected graph with minimum degree $\lfloor n/2 \rfloor - 1$ that contains no connected spanning subgraph.

The bottleneck in this example is the connectivity rather than the minimum degree. To better analyze the relation between the minimum degree and the substructures of graphs, it is natural to ask for minimum degree conditions on $G$ that guarantee some spanning subgraph $H$ provided $G$ also satisfies some given connectivity conditions. For instance, it is necessary for $G$ to be $1$-connected and $2$-connected in the cases when $H$ is a spanning tree and a spanning cycle, respectively.

%This makes sense for the cases where the disjoint union of cliques provides an (almost) tight examples for lacking a copy of $H$, i.e. when $H$ is a graph close to being a tree or a cycle.

In the case where $H$ is a spanning tree and $G$ is connected, a number of results were proved for special classes of trees showing that minimum degrees that are strictly less than $(n-1)/2$ guarantee those trees. For example, Win~\cite{Win} proved that every $n$-vertex  connected
graph with $\delta(G)\geq (n-1)/k$ contains a spanning tree $T$ with $\Delta(T)\leq k$. Broersma and 
Tuinstra~\cite{BT} showed that every $n$-vertex  connected
graph with $\delta(G)\geq (n-k+1)/2$ contains a spanning tree  with 
 at most $k$ leaves. Several results have been obtained on the existence of spanning trees with bounded number of {\em branching vertices}, that is, vertices of degree at least $3$, see e.g.~\cite{CS,FKKLR,
  GHSV1,GHSV2,OY}. 
  
  Trees with at most one branching vertex are called {\em spiders}. Gargano,  Hell,  Stacho and  Vaccaro~\cite{GHSV2} proved that every connected $n$-vertex graph $G$ with $\delta(G)\geq (n-1)/3$ contains a spanning { spider}. Chen,  Ferrara, Hu,   Jacobson and  Liu~\cite{CFHJL} showed that the same condition implies existence of a more restricted kind of spiders. A {\em broom} is
   a spanning spider obtained by joining the center of a star to an endpoint of a path. In other words,
   a  {\em broom} is a spider with all but at most one leg of length $1$.

\begin{thm}[\cite{CFHJL}]\label{Chen}
If $G$ is a connected graph of order $n\geq 56$ with $\delta(G)\geq \frac{n-2}{3}$, then $G$ contains
a spanning broom. The condition $\delta(G)\geq \frac{n-2}{3}$ is sharp.
\end{thm}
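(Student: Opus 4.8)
The plan is to proceed by contradiction: assume $G$ is connected, $n \ge 56$, $\delta := \delta(G) \ge (n-2)/3$, and $G$ has no spanning broom. Since a Hamiltonian path is a (degenerate) spanning broom, $G$ is not traceable, so fix a longest path $P = v_1 v_2 \cdots v_k$ and set $R := V(G) \setminus V(P)$; then $R \neq \emptyset$ and, by maximality of $P$, $N(v_1) \cup N(v_k) \subseteq V(P)$. Two quick structural facts come first. If $G[V(P)]$ had a Hamiltonian cycle then, since $G$ is connected and $R \neq \emptyset$, some vertex of $R$ would attach to that cycle and produce a path longer than $P$; hence $G[V(P)]$ has no Hamiltonian cycle, so $v_1 v_k \notin E(G)$ and there is no crossover, i.e. no index $i$ with $v_1 v_i \in E(G)$ and $v_k v_{i-1} \in E(G)$. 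Counting on the index set $\{2,\dots,k\}$ then gives $k \ge 2\delta + 1$, hence $|R| \le n - 2\delta - 1 \le (n+1)/3$ and $k \ge (2n-1)/3$. So $R$ is not large; but, as the extremal example (three near-cliques of order about $n/3$ whose tips are joined pairwise into a triangle) shows, it need not be small either.

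\textbf{Reformulating the goal.} A spanning broom with centre $c$ is the same as a set $R' \subseteq V(G)$ and a vertex $c \notin R'$ with $R' \subseteq N(c)$ such that $G - R'$ has a Hamiltonian path with $c$ as an endpoint (the vertices of $R'$ become the leaves at $c$). So I want to produce such a pair $(c, R')$, ideally with $R' = R$. The naive choice $c = v_1$ fails because $N(v_1) \cap R = \emptyset$, which forces rerouting. I would run Pósa rotations on $P$, fixing the endpoint $v_k$, to obtain the set $S$ of vertices that can occur as the moving endpoint of a longest path; each vertex of $S$ has all its neighbours in $V(P)$, and Pósa's lemma bounds $|N(S)|$ linearly in $|S|$, which together with the degree condition forces $|S|$ to have order $\delta$. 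Then I would combine the spread of these endpoints with the way $R$ attaches to $V(P)$ to locate a vertex $c$ that dominates $R$ and can simultaneously be turned into an endpoint of a Hamiltonian path of $G[V(P)]$ by rotations internal to $G[V(P)]$; attaching all of $R$ as pendant leaves at such a $c$ then yields a spanning broom. The regime $|R| \in \{1,2\}$ I would treat separately, since there $G - R$ is almost all of $G$ and traceability from a prescribed neighbour of $R$ is comparatively easy to force.

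\textbf{The main obstacle.} The difficulty is precisely that the prospective broom centre must do two competing things: dominate $R$, and be flexible enough to serve as a path endpoint after $V(P)$ is reorganised. The extremal configuration shows these demands genuinely conflict, and with $\delta \ge (n-2)/3$ there is essentially no slack to play with. I therefore expect the argument to split according to how well connected $G$ is, equivalently according to the size of the smallest separator that isolates part of $R$: when $G$ is well connected, $G[V(P)]$ is rich enough that rotations deliver the desired endpoint freely; when it is poorly connected, one shows $G$ is pushed into the extremal family (three large near-cliques sharing a separator of size $2$ or $3$) and then checks by hand that a spanning broom exists unless $G$ is exactly an extremal graph. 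This structural rigidity is what forces the cliques to be large, and hence is where the hypothesis $n \ge 56$ is spent. Making the rotation bookkeeping mesh cleanly with the structural dichotomy, while correctly accounting for edges inside $R$ and for the three residues of $n$ modulo $3$, is the crux. Sharpness is then witnessed by the three-clique graph above, padded to have $n$ vertices, which has minimum degree just below $(n-2)/3$ and no spanning broom.
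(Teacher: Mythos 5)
This statement is Theorem~\ref{Chen}, which the paper quotes from Chen, Ferrara, Hu, Jacobson and Liu~\cite{CFHJL} and does not prove; the closest thing the paper proves is the Ore-type variant Theorem~\ref{broom}, and it does so by an entirely different route (reduce to the $2$-connected case via the spanning-jellyfish Theorem~\ref{main}, since a spanning jellyfish contains a spanning broom, and handle a cut vertex $w$ directly by analysing the components of $G-w$ together with Lemma~\ref{lem: voss}). So your longest-path/rotation strategy is not the one used anywhere in this paper, and it must be judged on its own.

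Judged on its own, there is a genuine gap: what you have written is a plan, not a proof. The preliminary steps are sound --- a Hamiltonian path is a degenerate spanning broom, so $R\neq\emptyset$; $G[V(P)]$ has no Hamiltonian cycle; the crossover count gives $k\geq 2\delta+1$ and $|R|\leq (n+1)/3$; and the reformulation of a spanning broom as a pair $(c,R')$ with $R'\subseteq N(c)$ and $G-R'$ traceable from $c$ is correct. But the entire content of the theorem lies in the step you describe only in the conditional mood: producing a vertex $c$ that \emph{simultaneously} dominates $R$ and can be made an endpoint of a Hamiltonian path of $G-R$. P\'osa rotation with $v_k$ fixed does give an endpoint set $S$ with $|N(S)|<2|S|$ and hence $|S|>\delta/2$, but nothing you write forces any vertex of $S$ (or of $N(S)$, or of anything rotation produces) to be adjacent to all of $R$, nor shows that $G[V(P)]$ is traceable from such a vertex; you yourself flag this as ``the crux'' and as the place where the extremal configuration creates a genuine conflict, and then you only ``expect'' a dichotomy between the well-connected and near-extremal cases without establishing either branch. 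The cases $|R|\in\{1,2\}$, the stability analysis forcing the three-clique structure, and the role of $n\geq 56$ are likewise all deferred. Until the existence of the dominating-and-reachable centre $c$ is actually proved in both branches of the dichotomy, the argument does not establish the theorem.
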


For the case where $H$ is a cycle, Dirac~\cite{D} proved that the every $2$-connected graph $G$ contains a cycle of length at least $2\delta(G)$. 
Chen et al.~\cite{CFHJL} also considered a spanning structure similar to brooms in $2$-connected graphs---a {\bf jellyfish} consists of a cycle $C$ and a set of vertices $X\subseteq V(G) - V(C)$ all adjacent to the same vertex in $C$, see Fig. 1.
Observe that a jellyfish can be obtained from a broom by the addition of one edge. See~\cite{keyring1, keyring2, keyring3} for other results related to jellyfish (also called {\em keyrings}).

%Chen at al.~\cite{CFHJL} also considered the existence of spanning jellyfish in a $2$-connected graph. Note that it is easy to check that $2$-connectedness is required as in Remark~\ref{rmk:1}.

\begin{figure}[h]
    \centering
    \begin{minipage}{0.45\textwidth}
        \centering        \includegraphics[width=0.7\textwidth]{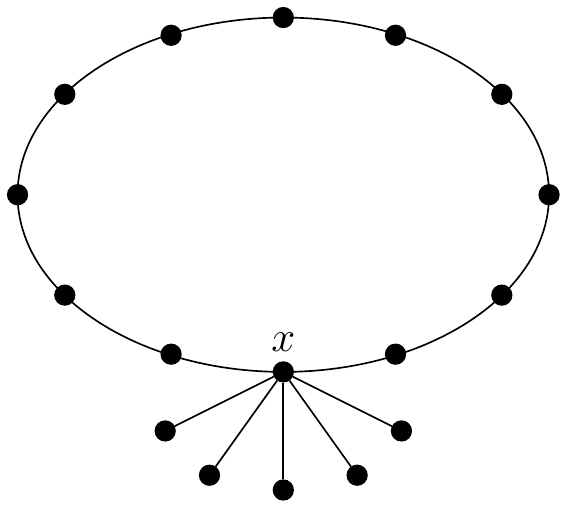} 

       \caption{A jellyfish graph}
        \label{jellypic}
        
    \end{minipage}
    \begin{minipage}{0.5\textwidth}
        \centering
        \includegraphics[width=0.7\textwidth]{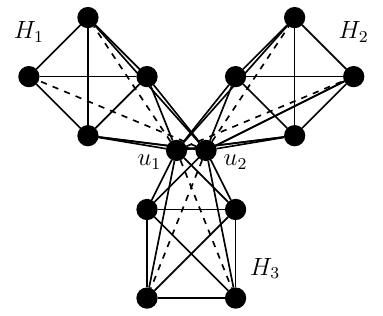} 
        \caption{A graph with no spanning jellyfish}
        \label{ex2}
    \end{minipage}
\end{figure}

 Let $c(G)$ denote the length of a longest cycle in $G$ and $p(G)$ denote the number of vertices in a longest path in $G$. 

\begin{thm}[\cite{CFHJL}]\label{Chen2}
Let  $G$ be a $2$-connected graph of order $n$ such that $\delta(G)\geq \frac{n-2}{3}$ and
$p(G)-c(G)\geq 1$. 
 Then $G$ contains
a spanning jellyfish. 
\end{thm}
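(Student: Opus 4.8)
The plan is to derive the spanning jellyfish from the spanning broom supplied by Theorem~\ref{Chen}, using the hypothesis $p(G)-c(G)\ge 1$ to close the handle of the broom into a cycle.

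First observe that $G$ has no hamiltonian cycle: such a cycle would give $c(G)=n\ge p(G)$, contrary to $p(G)\ge c(G)+1$. We may assume $n$ is large enough for Theorem~\ref{Chen} to apply (the remaining bounded range can be handled directly, using the bound $c(G)\ge\min\{n,2\delta(G)\}$ for $2$-connected $G$). Since $G$ is connected, Theorem~\ref{Chen} gives a spanning broom $B$, say with handle $u_1u_2\cdots u_k$ and bristles $w_1,\dots,w_j$, each adjacent to $u_1$, where $k+j=n$. Two simple reductions handle most cases. If $u_1u_k\in E(G)$, then $u_1u_2\cdots u_ku_1$ is a cycle and $w_1,\dots,w_j\in N(u_1)$ are pendant at a vertex of it, so we obtain a spanning jellyfish; and if $u_kw_i\in E(G)$ for some $i$, then the cycle $u_1u_2\cdots u_kw_iu_1$ together with pendant set $\{w_1,\dots,w_j\}\setminus\{w_i\}\subseteq N(u_1)$ does the same. (When $j=0$, so $B$ is a hamiltonian path, the parallel reductions are $u_1u_{k-1}\in E(G)$ and $u_2u_k\in E(G)$.) Thus we may assume $N(u_k)\subseteq\{u_2,\dots,u_{k-1}\}$, so that $k-2\ge d(u_k)\ge\frac{n-2}{3}$ and the handle is long.

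For the surviving case I would run P\'osa rotations at the handle end $u_k$, keeping $u_1$ and all bristles fixed. Let $\mathcal E$ be the set of vertices that occur as the non-brush end of a spanning broom of $G$ with brush $u_1$, bristle set $\{w_1,\dots,w_j\}$, and handle spanning $\{u_1,\dots,u_k\}$; then $u_k\in\mathcal E$. Applying the two reductions above to each such broom shows that no vertex of $\mathcal E$ is adjacent to $u_1$ or to a bristle, so $N(\mathcal E)\subseteq\{u_2,\dots,u_k\}$; combined with the rotation move (a neighbour of $z\in\mathcal E$ along a handle, other than the vertex just before $z$, forces its successor into $\mathcal E$) and the standard P\'osa estimate $|N(\mathcal E)|<2|\mathcal E|$, this forces $\mathcal E$ to be large, $|\mathcal E|>\frac{1}{2}\delta(G)$. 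Now I would bring in $p(G)-c(G)\ge 1$: an edge $zz'$ with $z,z'\in\mathcal E$ completes the corresponding handle to a cycle on all of $\{u_1,\dots,u_k\}$, of length $k\le c(G)$, whereas by pushing the ends of handles as far apart as possible inside the bristle-free interior and splicing in long sub-paths one obtains either a path on more than $c(G)$ vertices lying inside a $2$-connected piece, which can then be closed into a cycle of length exceeding $c(G)$ --- impossible --- or a cycle through $u_1$ containing all of $u_3,u_4,\dots,u_k$; in the latter case the vertices outside that cycle all lie in $\{u_2,w_1,\dots,w_j\}\subseteq N(u_1)$, so it extends to a spanning jellyfish with attachment vertex $u_1$.

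The step I expect to be the real obstacle is precisely the last one: converting the surplus $p(G)-c(G)\ge1$ into a cycle through the brush vertex that covers the entire handle. This is where the near-extremal graphs --- essentially three cliques of order about $\frac{n-2}{3}$ strung consecutively together, for which $p(G)=c(G)$ and no spanning jellyfish exists --- must be ruled out, and making the rotation/splicing bookkeeping genuinely yield more length than a longest cycle permits is the technical heart of the argument.
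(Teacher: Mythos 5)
First, a point of orientation: the paper does not prove Theorem~\ref{Chen2} at all --- it is quoted from~\cite{CFHJL}, and the authors note in Section~\ref{secb} that the proof there goes through a longest cycle together with van den Heuvel's modification of the Hopping Lemma (the ancestor of Lemma~\ref{hopping}). Your route --- start from the spanning broom of Theorem~\ref{Chen} and try to close its handle by P\'osa rotations --- is therefore genuinely different from the known argument, and your opening reductions (non-hamiltonicity from $p>c$, closing the handle when $u_k$ sees $u_1$ or a bristle, hence $N(u_k)\subseteq\{u_2,\dots,u_{k-1}\}$ and $k\ge\delta(G)+2$, and the lower bound $|\mathcal E|>\tfrac12\delta(G)$ for the rotation endpoint set) are all sound.

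The proof nevertheless has a genuine gap, and it sits exactly where the content of the theorem lies. Two concrete problems. (i) The assertion that an edge $zz'$ with $z,z'\in\mathcal E$ ``completes the corresponding handle to a cycle on all of $\{u_1,\dots,u_k\}$'' is false: $z'$ is an endpoint of \emph{some} broom in the rotation family, but on the handle ending at $z$ it is an interior vertex, so the edge $zz'$ produces a cycle with a path hanging off an arbitrary cycle vertex (a lollipop), not a jellyfish --- the pendant part is not attached at a single vertex, let alone at $u_1$. Closing a handle requires an edge from a member of $\mathcal E$ to $u_1$ or to a bristle, which is precisely what you have already excluded. (ii) The final ``splicing'' step, which you yourself flag as the real obstacle, is the only place the hypothesis $p(G)-c(G)\ge1$ does any work beyond ruling out hamiltonicity, and it is not an argument but a description of what one would like to prove. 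This matters because the hypothesis is not a technicality: the graph of Fig.~\ref{ex2} (three cliques of order $(n-2)/3$ joined by two connector vertices) satisfies $\delta\ge(n-2)/3$, contains a spanning broom, and has no spanning jellyfish; it is excluded from Theorem~\ref{Chen2} only because $p=c$ there. Any broom-first proof must therefore locate, inside the rotation bookkeeping, the exact mechanism by which $p>c$ forbids this configuration, and that mechanism is absent here. As written, the argument proves nothing beyond what Theorem~\ref{Chen} already gives.
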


Here, $2$-connectedness is required, see Remark~\ref{rmk:1}.
Note that the inequality $p(G)-c(G)\geq 1$ is a serious restriction. In particular,
if we drop this restriction with no other changes, then the conclusion of  Theorem~\ref{Chen2} would not hold:
 there are 
 $2$-connected graphs of order $n$ with $\delta(G)\geq \frac{n-2}{3}$ that do not contain a spanning jellyfish.
 One example $H$ for $n=3s+2$ is obtained from $3$ disjoint copies $H_1,H_2,H_3$  of the complete graph $K_s$ by adding two nonadjacent vertices, say $u_1,u_2$ and making each of $u_1,u_2$ adjacent to all but one vertex in each of $H_1,H_2,H_3$ so that every vertex in $V(H_1)\cup V(H_2)\cup V(H_3)$ is adjacent to at least one of 
 $u_1,u_2$; see Fig. 2. Indeed, $\delta(H) = s = (n-2)/3$, each cycle in $H$ intersects at most two $H_i$'s, and the vertices of the remaining $H_j$ do not have a single common neighbor. 
 
 In this paper, we give an exact degree condition for a $2$-connected $n$-vertex graph to guarantee the existence of a spanning jellyfish. Moreover, we prove a slightly stronger bound of Ore-type. For a graph $G$,  define $\sigma_2(G) = \min_{uv \notin E(G)} d(u) +d(v)$. Our main result is:

\begin{thm}\label{main}
For $n \geq 13$, every  $2$-connected $n$-vertex graph $G$ with $\sigma_2(G) \geq \frac{2n-3}{3}$  contains a spanning jellyfish.
\end{thm}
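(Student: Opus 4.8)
\emph{Outline of the intended proof.} The plan is to assume $G$ is not Hamiltonian (otherwise a Hamiltonian cycle is already a spanning jellyfish, with no pendant vertices), fix a longest cycle $C$ with a chosen cyclic orientation, and set $R = V(G)\setminus V(C)$, $r = |R|\ge 1$, $c = |C| = n-r$; for $v\in V(C)$ I write $v^{+},v^{-}$ for its successor and predecessor on $C$, for $S\subseteq V(C)$ I write $S^{+}=\{v^{+}:v\in S\}$ and $S^{-}=\{v^{-}:v\in S\}$, and $N_C(x)$ denotes the set of neighbours of $x$ on $C$. I would first record the routine maximality facts: every component $D$ of $R$ has at least two neighbours on $C$ (by $2$-connectivity), $N_C(D)$ has no two consecutive vertices of $C$, each of $\{x\}\cup N_C^{+}(x)$ and $\{x\}\cup N_C^{-}(x)$ is independent for $x\in D$, and no vertex of $N_C^{+}(D)\cup N_C^{-}(D)$ has a neighbour in $D$ --- each forbidden adjacency producing a cycle longer than $C$ by rerouting a sub-arc through $D$. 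Feeding these non-edges into $\sigma_2(G)\ge \tfrac{2n-3}{3}$ then gives, for $x\in R$ and $v\in N_C(x)$, inequalities such as $d(x)+d(v^{+})\ge \tfrac{2n-3}{3}$ and $d(u^{+})+d(v^{+})\ge \tfrac{2n-3}{3}$ for distinct $u,v\in N_C(x)$. (It is also worth noting that, because $G$ is $2$-connected and non-Hamiltonian, a longest path has more than $c$ vertices --- take $u\in D$ adjacent to $v\in V(C)$ and the path $v^{+},v^{++},\dots,v,u$ --- so P\'osa rotations are available as an alternative, but I find it cleaner to work with $C$.)

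The conceptual heart would be to apply the modified form of Woodall's Hopping Lemma to a component $D$ of $R$ together with $C$. This should give a dichotomy: either a cycle longer than $C$ (contradiction), or a ``hopping-closed'' configuration in which, in addition, no vertex of $D$ is insertible into $C$ and each arc of $C$ between two cyclically consecutive vertices of $N_C(D)$ contains at least two vertices outside $N_C(D)\cup N_C^{+}(D)\cup N_C^{-}(D)$, while the ``hopping-reachable'' vertices enlarge the independent sets found above. Plugging this extra independence and the extra room on $C$ back into the $\sigma_2$ inequalities --- using the non-edges both between $R$ and $N_C^{\pm}(R)$ and inside $N_C^{\pm}(R)$ --- I expect to cut the problem down to a short list of near-extremal shapes: $R$ a single component, $N_C(R)$ small, and the ``deficiency'' of the natural candidate centres tightly bounded. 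The object these estimates must exclude is the graph $H$ described above, which has $\sigma_2(H)=\tfrac{2n-4}{3}$, exactly $\tfrac13$ below the hypothesis; that $\tfrac13$ of slack, together with $n\ge 13$, is what the counting should consume. If $r=1$ the proof is already finished, since the unique vertex of $R$ has a neighbour on $C$.

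The remaining task would be to exhibit the jellyfish in the surviving configurations. If some $w\in V(C)$ is adjacent to all of $R$, then $C$ with $R$ pendant at $w$ is the desired spanning jellyfish. Otherwise, I would use the room around an attachment point guaranteed by the Hopping Lemma to reroute $C$ via a short detour through $D$, obtaining a cycle $C'$ through a high-degree vertex $w$ --- chosen among $N_C^{+}(R)$ or $N_C(R)$ --- for which $V(G)\setminus V(C')$ is a short arc of the old cycle together with the few leftover vertices of $R$, all adjacent to $w$; here the degree inequalities are exactly what forces such a $w$ to exist and the discarded set to be dominated by it. I expect this endgame --- enumerating the small cases and checking in each that one of these two alternatives succeeds and that the rerouting is genuinely a cycle --- to be the main obstacle, with the modified Hopping Lemma playing the essential role of reducing an a priori unbounded problem to a finite verification.
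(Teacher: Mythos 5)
Your outline correctly identifies the central tool (a modified Hopping Lemma applied to a longest cycle) and the rough two-stage shape of the argument, but as written it has two genuine gaps, and each would stop the proof.

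First, the Hopping Lemma cannot be applied ``to a component $D$ of $R$ together with $C$'' when $D$ contains edges. The iterative construction $Y_0=V(G-C)$, $X_i=N_C(Y_{i-1})$, $Y_i=\{v: v^-,v^+\in X_i\}\cup Y_{i-1}$ and its conclusions ($X$ has no two consecutive vertices of $C$, $Y$ is independent, $N(Y)\subseteq X$) require that every vertex outside $C$ has all of its neighbours on $C$; otherwise the longer-cycle/good-path contradictions underlying the lemma fail. The paper therefore spends an entire section proving that for a cycle that is longest and, subject to that, contains as many \emph{low} vertices as possible (an $L$-maximal cycle, where $L$ is the clique of vertices of degree at most $(n-2)/3$), the graph $G-C$ has no edges at all. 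That reduction is itself a long case analysis (all components of $G-C$ are complete, then there is only one component, then $C$ would have to be a ``$2$-frame,'' which is then excluded), and it is where the Ore-type hypothesis and the low-vertex clique do most of their work. Your outline never refines the cycle choice beyond ``longest,'' and without the secondary maximization of $|C\cap L|$ the later swapping arguments break: exchanging a cycle vertex with an outside vertex can destroy optimality precisely when the swapped-out vertex is low.

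Second, the endgame is not a finite verification of near-extremal shapes, and you explicitly defer it (``I expect this endgame \dots to be the main obstacle''). In the paper it is a single global counting argument: for every normal $x\in Y_0$ one proves $|R(x)|\ge \tfrac13(|Y|+|Y_0|-2)$, where $R(x)$ is the set of cycle positions into which $x$ can be inserted; by averaging one finds $x\in X$ with $d_{Y\setminus L}(x)>\tfrac23|Y\setminus L|$; one then chooses $C$ and $x$ maximizing $N_{Y_0}(x)$ under swaps, takes a non-neighbour $y\in Y_0$ of $x$, and observes that $R(y)$ and $N_{Y\setminus L}(x)$ are almost disjoint subsets of $Y$, so the two lower bounds sum to more than $|Y|-1$, a contradiction. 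Neither this insertion-counting step nor the swap-maximality device appears in your plan, so the outline does not yet constitute a proof of the theorem.
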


This theorem implies a similar result with a bound on the minimum degree.
\begin{cor}\label{maindirac} For $n \geq 13$, every  $2$-connected $n$-vertex graph $G$ with 
$\delta(G) \geq \frac{n-1}{3}$ contains a spanning jellyfish.
\end{cor}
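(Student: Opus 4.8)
The plan is to derive Corollary~\ref{maindirac} directly from Theorem~\ref{main}: one only needs to check that the minimum degree hypothesis $\delta(G)\geq \frac{n-1}{3}$ forces the Ore-type hypothesis $\sigma_2(G)\geq \frac{2n-3}{3}$, and then quote Theorem~\ref{main}. So I would let $G$ be a $2$-connected $n$-vertex graph with $n\geq 13$ and $\delta(G)\geq \frac{n-1}{3}$.

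First I would dispose of the degenerate case in which $G$ has no pair of non-adjacent vertices, i.e.\ $G=K_n$. By the usual convention $\sigma_2(K_n)=+\infty$, so the hypothesis of Theorem~\ref{main} holds vacuously; alternatively, since $n\geq 13\geq 3$, the graph $K_n$ has a hamiltonian cycle, which is a spanning jellyfish (with empty set of ``tentacle'' vertices), so there is nothing to prove. Otherwise $G$ has at least one non-adjacent pair, and for every such pair $(u,v)$ we have
\[
d(u)+d(v)\;\geq\;2\delta(G)\;\geq\;\frac{2(n-1)}{3}\;=\;\frac{2n-2}{3}\;\geq\;\frac{2n-3}{3}.
\]
Taking the minimum over all non-adjacent pairs yields $\sigma_2(G)\geq \frac{2n-3}{3}$, and Theorem~\ref{main} then produces a spanning jellyfish in $G$.

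I do not expect any genuine obstacle here: the whole argument is a one-line reduction to Theorem~\ref{main}, and the only points requiring (entirely routine) care are the complete-graph case, where $\sigma_2$ is vacuously large, and the trivial inequality $2n-2\geq 2n-3$. All the real content lies in Theorem~\ref{main} itself.
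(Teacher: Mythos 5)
Your proposal is correct and matches the paper's (implicit) argument exactly: Corollary~\ref{maindirac} is derived from Theorem~\ref{main} by the observation that $\delta(G)\geq \frac{n-1}{3}$ gives $\sigma_2(G)\geq \frac{2n-2}{3}\geq \frac{2n-3}{3}$ for any non-adjacent pair. The handling of the complete-graph case is routine and fine.
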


Note that the graph $H$ in Fig. 2  %defined a paragraph above Theorem~\ref{main} 
shows the sharpness of both,
Theorem~\ref{main} and Corollary~\ref{maindirac}.

From Theorem~\ref{main} we derive a slight strengthening for $n\neq 2 \pmod 3$
of Theorem~\ref{Chen}:
%a result by~\cite{CFHJL}] on the existence of a spanning broom.

\begin{thm}\label{broom}Let $n \geq 13$. If $G$ is a connected, $n$-vertex graph with $\sigma_2(G) \geq \frac{2n-3}{3}$, then $G$ contains a spanning broom.
\end{thm}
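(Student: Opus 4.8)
The plan is to reduce to Theorem~\ref{main}. The key observation, noted in the excerpt, is that every spanning jellyfish contains a spanning broom: deleting one edge of the cycle $C$ of a jellyfish (the edge incident to the center $v$ of the star on the side away from the rest of $C$, or more simply any edge of $C$ incident to $v$) turns $C$ into a path with one endpoint at $v$, and $v$ is still joined to all the leaves of the star, so the result is a broom. Hence it suffices to show that a connected $n$-vertex graph $G$ with $\sigma_2(G)\ge \frac{2n-3}{3}$ either is already $2$-connected (so Theorem~\ref{main} applies), or else admits a spanning broom directly by an ad hoc argument exploiting its cut vertices.

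So the first step is: if $G$ is $2$-connected, apply Theorem~\ref{main} to get a spanning jellyfish and then delete a cycle edge at the center to obtain a spanning broom; done. The remaining case is that $G$ is connected but has a cut vertex $x$. The plan here is to use the $\sigma_2$ bound to severely restrict the block structure. If $G-x$ has components $A_1,\dots,A_k$ with $k\ge 2$, pick vertices $a_i\in A_i$ that are non-adjacent (they lie in different components, hence are non-adjacent), so $d(a_1)+d(a_2)\ge \frac{2n-3}{3}$; but $d(a_i)\le |A_i\cup\{x\}|-1=|A_i|$, forcing $|A_1|+|A_2|$ to be large, and iterating over all pairs shows $k=2$ and that at least one component, say $A_2$, together with $x$ induces a set in which the non-neighbors of any high-degree vertex are confined, so $A_2\cup\{x\}$ must be close to complete; more precisely one shows $k=2$ and one side is small. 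One then builds the broom explicitly: take a longest path $P$ inside the large side $A_1\cup\{x\}$ starting or ending at $x$ (using that $G[A_1\cup\{x\}]$ is $2$-connected or close to it, or using Theorem~\ref{main} on $G[A_1\cup\{x\}]$ if it is $2$-connected and large enough), and hang all of $A_2$ off $x$ as the star — this works precisely because $x$ is a cut vertex so every vertex of $A_2$ that is not adjacent to $x$ has all its neighbors in $A_2\cup\{x\}$, forcing (by the $\sigma_2$ bound applied to such a vertex and any vertex of $A_1$) that in fact $A_2$ is small and highly connected to $x$, or even that every vertex of $A_2$ is adjacent to $x$.

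The main obstacle, I expect, is the bookkeeping in the cut-vertex case: one must handle the possibility that neither side $A_i\cup\{x\}$ is $2$-connected (so Theorem~\ref{main} cannot be applied recursively there), and ensure the path one constructs on the large side genuinely reaches $x$ as an endpoint while covering every vertex of that side. A clean way around this is to note that the $\sigma_2$ hypothesis is inherited, with a slightly weaker but still adequate bound, by $G[A_1\cup\{x\}]$ together with the vertices of $A_2$ contracted appropriately, or simply to observe that the $\sigma_2$ bound for $G$ combined with $|A_2|$ small forces $G[A_1\cup\{x\}]$ to have $\sigma_2$ at least $\frac{2n-3}{3}-|A_2|\ge \frac{2(n-|A_2|)-3}{3}$ whenever $|A_2|\le (n-|A_2|)$ roughly, and to finish by an explicit path-building argument rather than invoking the theorem recursively. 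The boundary cases with very small $n$ near $13$, and the parity remark that the strengthening only gains something when $n\not\equiv 2\pmod 3$, would be checked separately at the end.
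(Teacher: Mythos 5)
Your first step is fine and matches the paper: if $G$ is $2$-connected, Theorem~\ref{main} gives a spanning jellyfish, and deleting a cycle edge at the branch vertex yields a spanning broom. The cut-vertex case, however, contains a genuine gap, and your plan for it points in the wrong direction. A minor issue first: the degree-sum condition does not force the number $t$ of components of $G-x$ to be $2$. The case $t=3$ survives the counting (three pairwise nonadjacent vertices $u_i\in S_i$ give only $\sum_i d(u_i)\ge n-\frac{3}{2}$, which is compatible with $\sum_i |S_i|=n-1$) and must be handled separately; the paper does so by observing that the resulting equality forces $x$ to be adjacent to every vertex, yielding a spanning star.

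The serious problem is in the two-component case. You propose to hang the small side $A_2$ off $x$ as the star and run the long path through the large side $A_1\cup\{x\}$. Neither ingredient is forced by the hypotheses: (i) the vertices of $A_2$ need not all be adjacent to $x$ --- for instance $A_2$ can be a clique of size roughly $n/3$ with a single vertex joined to $x$, and every degree-sum constraint is still satisfied --- so $A_2$ cannot in general serve as the star; and (ii) you have no argument producing a hamiltonian path of $G[A_1\cup\{x\}]$ ending at $x$, since $G[A_1]$ may fail to be hamiltonian, and that is exactly the hard case. The paper's resolution reverses the roles. It shows via Ore's theorem that any side whose induced subgraph fails to be hamiltonian must be the \emph{large} side $S_1$ with $|S_1|\ge \frac{2n}{3}-2$; hence the small side is hamiltonian and supplies a path $P_2$ from $x$ through all of $S_2$, which becomes the handle of the broom. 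It then analyzes $H=G[S_1]$ with $x$ joined to all of $S_1$, notes $\sigma_2(H)\ge |S_1|$, and invokes Lemma~\ref{lem: voss} (a $2$-connected nonhamiltonian graph with $\sigma_2=n-1$ contains $K_{k,k+1}$) to conclude that either $H$ is hamiltonian, giving a hamiltonian path of $G$, or $G[S_1]\supseteq K_{s-1,s+1}$, in which case $G[S_1+x]$ has a spanning broom whose long path starts at $x$ and whose star sits inside the \emph{large} side. That bipartite structural lemma is the missing idea in your sketch; without it, explicit path-building on the large side cannot get past the possibility that $G[S_1]$ is nonhamiltonian.
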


\begin{rem}\label{rmk:1}
	The property of $2$-connectedness is necessary in Theorem~\ref{main} and Corollary~\ref{maindirac}: the connected graph $G$ consisting of disjoint cliques of order $\lfloor n/2 \rfloor$ and $\lceil n/2 \rceil$ with a single additional edge has $\delta(G)=\lfloor n/2\rfloor-1$, $\sigma_2(G) = n-2$ and no spanning jellyfish.
\end{rem}

%For $n=2 ({\rm mod}\, 4)$, there are $\frac{n-2}{4}$-connected graphs not containing spanning spiders. Thus, $g(n)\geq \frac{n-1}{4}$.

%{\bf Remark 2}. The bound in Theorem~\ref{main} is best possible for infinitely many $n$ due to the following construction. 
%Let $n = 2 \pmod{3}$, $n \geq 8$, and take $3$ copies $H_1, H_2, H_3$ of $K_{(n+4)/3}$ all sharing two common vertices $u$ and $v$. In each $H_i$, remove one edge incident to $u$ and one edge incident to $v$ such that these edges do not share a common endpoint. Call this graph $H$.
%We have $\delta(H) = (n+4)/3 - 2 = (n-2)/3$, $\sigma_2(H) = \frac{2n-4}{3}$, any cycle in $H$ intersects at most two $H_i$'s outside of $\{u,v\}$, and the vertices of the remaining $H_j$ do not have a single common neighbor. 

\bigskip
The structure of the paper is as follows. In the next section, we introduce some notation and prove or cite some claims for graphs with ``high" Ore-degree, $\sigma_2(G)$. In Section~\ref{seca} we consider a minimum counter-example $G$ to Theorem~\ref{main} and show that all components of a specially chosen longest cycle in $G$ are single vertices. In Section~\ref{secb} we finish the proof of Theorem~\ref{main}. An important tool in this section is a modification of the $50$-year-old Hopping Lemma by Woodall~\cite{W}. We prove Theorem~\ref{broom} in Section~\ref{secc}.

\section{Preliminaries}

For a graph $G$, $|G|$ denotes the number of vertices of $G$.
%For two graphs $G$ and $H$, we use $G\cap H$ to denote the vertex set $V(G)\cap V(H)$ and $G-H$ to denote the induced subgraph $G[V(G)\setminus V(H)]$. 
For a statement $S$, $\mathbbm{1}_{S}$ is an indicator function which is $1$ if $S$ is true and $0$ otherwise. 
For sets $A$ and $B$, we define $I_A=\mathbbm{1}_{A\neq \emptyset}$ and $J_{A,B} = I_{A\setminus B} + I_{B\setminus A}$.

\begin{lemma}\label{neighbor2}
Let $C=v_1 \dots v_c$ be a cycle and $2\leq q\leq c/2$. Suppose that $A$ and $B$ are nonempty subsets of nonconsecutive vertices of $C$ with $|A\cup B|\geq 2$ such that the distance between $v_i$ and $v_j$ in $C$ is either $0$ or at least $q+1$ for every $v_i \in A, v_j \in B$. 
\begin{enumerate}[label={\rm (N\arabic*)}]
	\item \label{N1} If $A=B$, then  $c\geq (q+1)|A|$.
	\item \label{N2} If $A\neq B$, then 
$c > (2-\frac{1}{2}\cdot \mathbbm{1}_{q=2})(|A|+|B|) + (2q-6) + 2J_{A,B} + (\frac{5}{2}-J_{A,B})\cdot \mathbbm{1}_{q=2}.	$	
\item \label{N3} If $A \neq B$ and $q=2$, then $c\geq |A|+2|B| + 1$.
\end{enumerate}
\end{lemma}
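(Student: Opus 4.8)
The plan is to prove all three parts by estimating the lengths of the arcs of $C$ cut off by cyclically consecutive vertices of $A\cup B$.

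Part \ref{N1} is immediate: since $A=B$ and $|A\cup B|\ge2$ we have $|A|\ge2$, and any two distinct $v_i,v_j\in A=B$ satisfy $d_C(v_i,v_j)\ge q+1$ by hypothesis; summing the $|A|$ arc-lengths between cyclically consecutive vertices of $A$ gives $c\ge(q+1)|A|$.

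For \ref{N2} and \ref{N3}, set $D=A\cap B$ and list the vertices of $A\cup B$ in cyclic order around $C$. Partition them into maximal \emph{runs}: a run is a maximal block of cyclically consecutive vertices of $A\cup B$ all lying in $A\setminus B$, or such a block all lying in $B\setminus A$, or a single vertex of $D$. Two observations drive the count. First, if $v_i,v_j$ are consecutive in the cyclic order of $A\cup B$ but lie in different runs, then $\{v_i,v_j\}\not\subseteq A\setminus B$ and $\{v_i,v_j\}\not\subseteq B\setminus A$, so after possibly swapping names we may assume $v_i\in A$ and $v_j\in B$; as $v_i\ne v_j$, the hypothesis gives $d_C(v_i,v_j)\ge q+1$. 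Second, consecutive vertices inside a single run are at distance $\ge2$ in $C$, since neither $A$ nor $B$ induces an edge of $C$. Writing $t$ for the number of runs (equivalently, the number of ``long'' arcs), and using $|A\cup B|=|A|+|B|-|D|$,
$$c\ \ge\ (q+1)\,t+2\bigl(|A\cup B|-t\bigr)\ =\ 2\bigl(|A|+|B|\bigr)-2|D|+(q-1)\,t .$$
Since each nonempty one of $A\setminus B$ and $B\setminus A$ accounts for at least one run and each vertex of $D$ for exactly one, $t\ge |D|+J_{A,B}$; and when $D=\emptyset$ the $(A\setminus B)$-runs and $(B\setminus A)$-runs alternate cyclically, so $t$ is even and $t\ge2$. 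Feeding $t\ge|D|+J_{A,B}$ into the display yields $c\ge2(|A|+|B|)+(q-3)|D|+(q-1)J_{A,B}$. For \ref{N3} ($q=2$) this becomes $c\ge 2|A|+2|B|-|D|+J_{A,B}=|A|+2|B|+|A\setminus B|+J_{A,B}\ge|A|+2|B|+1$, using $J_{A,B}\ge1$ (valid as $A\ne B$). For \ref{N2} with $q\ge3$ the term $(q-3)|D|$ is nonnegative and the claimed bound follows by a direct computation, the one point needing the parity remark being the boundary case $D=\emptyset$, $J_{A,B}=2$, where $t\ge2$ is used.

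The genuinely technical part is \ref{N2} for $q=2$, where the crude bound $c\ge 2|A\cup B|+t$ is too weak to reach $c>\tfrac32(|A|+|B|)+J_{A,B}+\tfrac12$ when $|A\setminus B|+|B\setminus A|$ is small. There one must look one step further: track the lengths of \emph{both} arcs flanking each run (a vertex of, say, $B$ has both incident arcs of length $\ge3$ unless its run-neighbour also lies in $B\setminus A$), and combine this with the run count and, when $D=\emptyset$, with the alternation of runs. The main obstacle I anticipate is exactly this refined accounting: pinning down the constants $\tfrac12$ and $\tfrac52$ and verifying the estimate in the extreme configurations — $|A\cup B|$ small, or one of $A,B$ obtained from the other by adding a single vertex — will take a careful, slightly tedious case analysis, whereas \ref{N1}, \ref{N3}, and \ref{N2} for $q\ge3$ drop out routinely once the run decomposition is set up.
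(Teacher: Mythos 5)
Your run/segment decomposition and the resulting master inequality
$c\ \ge\ 2(|A|+|B|)+(q-3)|A\cap B|+(q-1)J_{A,B}$
are exactly the paper's inequality \eqref{N33}, and your treatments of \ref{N1}, \ref{N3}, and \ref{N2} for $q\ge 3$ coincide with the paper's (for $q\ge 3$, $J_{A,B}=1$ you do also need $|A\cap B|\ge 1$, which holds since $A,B$ are nonempty and one contains the other). The genuine gap is that you never prove \ref{N2} for $q=2$: you declare the master inequality ``too weak,'' sketch a ``refined accounting'' of the arcs flanking each run, and then defer the whole case as an anticipated tedious case analysis. A proof attempt that ends by listing the cases it has not checked is not a proof of those cases, and $q=2$ is the case the paper actually uses (it is invoked with $s=2$ in Claim~\ref{app1}).

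The irony is that no refinement is needed: the paper closes $q=2$ from the very inequality you already derived. Since $A\neq B$, at least one of $A\setminus B$, $B\setminus A$ is nonempty, i.e.\ $|A\cap B|\le \tfrac12(|A|+|B|-1)$, so for $q=2$
\[
c\ \ge\ 2(|A|+|B|)-|A\cap B|+J_{A,B}\ \ge\ \tfrac32(|A|+|B|)+J_{A,B}+\tfrac12,
\]
and the right-hand side is precisely the bound in \ref{N2} for $q=2$. What stopped you was insisting on the strict inequality ``$>$'': in the boundary configurations (e.g.\ $|A\setminus B|+|B\setminus A|=1$, or $q\ge3$ with $A\cap B=\emptyset$ and $J_{A,B}=2$) the master inequality yields only ``$\ge$'' --- but that is all the paper's own proof establishes as well, it is all that is ever used downstream (see \eqref{eq:AneqB}), and in fact only ``$\ge$'' can be true (for $q=2$, $A=\{v_1,v_4\}$, $B=\{v_1\}$ on a $6$-cycle attains equality). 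So read \ref{N2} with ``$\ge$,'' substitute the bound $|A\cap B|\le\tfrac12(|A|+|B|-1)$ into your display, and your argument is complete and identical to the paper's.
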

\begin{proof}
If $A = B$ then every pair of vertices in $A$ are at distance at least $q+1$ so $c\geq (q+1)|A|$, yielding \ref{N1}. 

Now suppose $A \neq B$. The vertices of $A \cup B$ partition $E(C)$ into segments. Each such segment has at least two edges, and if a segment connects a vertex in $A$ to a vertex in $B$, then it has at least $q+1$ edges. The total number of segments is $|A|+|B|-|A\cap B|$ and at least $|A\cap B| + J_{A,B}$ segments contains at least $q+1$ edges. Hence
\begin{align}\label{N33}
c &\geq (q+1)(|A\cap B|+J_{A,B})+2(|A|+|B|-2|A\cap B|-J_{A,B}) \\
\nonumber & = 2|A|+2|B| + (q-3)|A\cap B|+ (q-1) J_{A,B}.
 \end{align}

 If $J_{A,B}=1$, then $|A\cap B|\geq 1$ as $A,B$ are nonempty. Also $A\neq B$ implies $|A\cap B| \leq \frac{1}{2}(|A|+|B|-1)$, so we can verify that it is at least $(2-\frac{1}{2}\cdot \mathbbm{1}_{q=2})(|A|+|B|) + (2q-4) + \frac{3}{2}\cdot \mathbbm{1}_{q=2}$. On the other hand, if $J_{A,B}=2$, then it is at least $(2-\frac{1}{2}\cdot \mathbbm{1}_{q=2})(|A|+|B|) + (2q-2) + \frac{1}{2}\cdot \mathbbm{1}_{q=2}$. This provides \ref{N2}.
 
  If $A \neq B$ and $q=2$, then~\eqref{N33} together with $|A\cap B|\leq |A|$ implies \ref{N3}. \end{proof}

\begin{lemma}\label{pathbound}Let $Q = w_1, w_2, \ldots, w_q$ be a path and $A, B \subseteq V(Q)$ be two nonempty subsets such that each set does not contain any consecutive vertices in $Q$, and $w_{q-1} \notin B$. Suppose that for all $w_i \in A, w_j \in B$ with $i>j$, we have $i-j \geq t+1$. Then
\begin{enumerate}
	\item[(a)] if all $w_i \in A, w_j \in B$ satisfy $i \leq j$, then $q \geq 2(|A| + |B| - 1) -1$, and 
	\item[(b)] if $t\geq 2$ and there exists $i'>j'$ with $w_{i'}\in A$ and $w_{j'}\in B$,  then 
	$$q \geq 2|A| + |B| + t-4 + \mathbbm{1}_{w_q \notin A} + \mathbbm{1}_{A \cap B = \emptyset}.$$ 
	\end{enumerate}
\end{lemma}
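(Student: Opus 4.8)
For part (a), let $a^{*}=\max\{i:w_i\in A\}$, $b_{*}=\min\{i:w_i\in B\}$ and $b^{*}=\max\{i:w_i\in B\}$. Since $A$ (respectively $B$) contains no two consecutive vertices of $Q$, we have $a^{*}\ge 2|A|-1$ and $b^{*}\ge b_{*}+2(|B|-1)$; the hypothesis of (a) gives $a^{*}\le b_{*}$, so $q\ge b^{*}\ge b_{*}+2|B|-2\ge a^{*}+2|B|-2\ge 2|A|+2|B|-3=2(|A|+|B|-1)-1$. (The assumption $w_{q-1}\notin B$ is not needed for this part.)

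For part (b) the plan is to count the edges of $Q$ in the blocks cut out by $A\cup B$, in the spirit of the proof of Lemma~\ref{neighbor2}. Write $w_{c_1},\dots,w_{c_m}$ for the vertices of $A\cup B$ with $c_1<\dots<c_m$ and $m=|A|+|B|-|A\cap B|$. They split the edge set of $Q$ into a \emph{head} of $c_1-1$ edges, a \emph{tail} of $q-c_m$ edges, and $m-1$ \emph{internal} segments, the $j$-th having $c_{j+1}-c_j$ edges. Because $A$ and $B$ each omit consecutive vertices and because of the distance hypothesis, every internal segment has at least $2$ edges, with exactly two types of exceptions: a segment whose left endpoint lies in $A\setminus B$ and whose right endpoint lies in $B\setminus A$ has only at least $1$ edge (a \emph{short} segment), while a segment whose left endpoint lies in $B$ and whose right endpoint lies in $A$ has at least $t+1$ edges (a \emph{long} segment); these two types cannot coincide. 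Moreover, the hypothesis of (b) forces at least one long segment: if $w_{i'}\in A$, $w_{j'}\in B$ and $i'>j'$, then walking along the vertices of $A\cup B$ from $w_{j'}$ to $w_{i'}$ one must at some step pass from a vertex of $B$ to a vertex of $A$, and that step is a long segment. Letting $\alpha$ be the number of short internal segments and $\lambda\ge 1$ the number of long ones, the internal segments carry at least $2(m-1)+(t-1)\lambda-\alpha$ edges, so since $Q$ has $q-1$ edges, $q\ge 1+(\mathrm{head})+(\mathrm{tail})+2(m-1)+(t-1)\lambda-\alpha$.

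After substituting $m=|A|+|B|-|A\cap B|$, this inequality already yields the desired bound when $A\cap B=\emptyset$, using only $\lambda\ge1$, the crude estimate $\alpha\le|B\setminus A|$ (each short segment ends at a distinct vertex of $B\setminus A$), and $\mathrm{head},\mathrm{tail}\ge 0$; the extra term $\mathbbm{1}_{w_q\notin A}$ is extracted from the tail using $w_{q-1}\notin B$ and the fact that $B$ omits consecutive vertices (if $w_{c_m}\in B$ then $c_m\ne q-1$, so the tail has $0$ or at least $2$ edges, and if $w_q\notin A$ then either $c_m<q$, contributing a tail edge, or $w_{c_m}\in B\setminus A$, which is treated apart). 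When $A\cap B\ne\emptyset$ the quantity $2(m-1)$ is smaller by $2|A\cap B|$, and this deficit must be recovered from the observation that the vertices of $A\cap B$ are pairwise at $Q$-distance at least $t+1$ (a later such vertex lies in $A$, an earlier one in $B$), so the internal segments between consecutive vertices of $A\cap B$ carry more edges than the crude ``$\ge 2$'' accounts for, together with the fact that a configuration of $A\cup B$-vertices with many short segments is forced to contain many long ones. (An alternative that uses (b) only as induction on $q$: take $w_{i^{*}}$ to be the first vertex of $A$ that has a vertex of $B$ before it and $w_{j^{*}}$ the last vertex of $B$ before $w_{i^{*}}$, so $i^{*}-j^{*}\ge t+1$ and no vertex of $A\cup B$ lies strictly between them; apply part (a) to the prefix $w_1\dots w_{j^{*}}$, whose $A$-vertices all precede its $B$-vertices, apply part (a), the inductive hypothesis, or a trivial estimate to the suffix $w_{i^{*}}\dots w_q$, and add the at least $t$ edges in between.)

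The structural content is light; the real work is the bookkeeping needed to hit the constant $-4$ together with both indicator terms simultaneously. The two awkward regimes I anticipate are $|A\cap B|\ge 2$, where one must use the refined segment estimate above rather than just ``$\ge 2$ per internal segment and $\lambda\ge1$'', and the boundary case $w_q\in B\setminus A$, where the bonus $\mathbbm{1}_{w_q\notin A}$ cannot be paid out of the tail and has to be found among the internal segments. I expect both to be disposed of by a short case analysis on the value of $|A\cap B|$ and on which of $w_{q-1},w_q$ belong to $A$ or $B$, while also keeping $q$ large enough that these boundary effects do not overlap.
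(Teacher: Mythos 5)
Your part (a) is correct and is just the paper's one-line observation (that the hypothesis forces $|A\cap B|\le 1$) made explicit. Part (b), however, is a plan rather than a proof: the two regimes you yourself flag as ``awkward'' --- $|A\cap B|\ge 2$ and the boundary case $w_q\in B\setminus A$ --- are exactly where the content of the lemma lies, and you defer them to a ``short case analysis'' that you never carry out. Concretely, your inequality $q\ge 1+2(m-1)+(t-1)\lambda-\alpha$ with only $\lambda\ge 1$ and $\alpha\le|B\setminus A|$ gives $q\ge 2|A|+|B|-|A\cap B|+t-2$. This does cover $|A\cap B|\le 1$ (including both indicator bonuses), but it falls short of $2|A|+|B|+t-4$ as soon as $|A\cap B|\ge 3$, and misses the bonus $\mathbbm{1}_{w_q\notin A}$ already at $|A\cap B|=2$. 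To repair it you would need to prove something like $\lambda\ge |A\cap B|-1$ (consecutive vertices of $A\cap B$ force a $B$-to-$A$ transition between them), and even then the cases $t=2$ and $|A\cap B|=2$ come out tight, so the term $\mathbbm{1}_{w_q\notin A}$ must still be hunted down separately when the last vertex of $A\cup B$ equals $w_q\in B\setminus A$ and the tail is empty. None of that is present, and your parenthetical inductive alternative is equally unquantified. So there is a genuine gap in part (b).

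For comparison, the paper does not decompose $Q$ into segments at all. It fixes an inversion pair $i'>j'$ with $i'-j'$ minimal and exhibits three pairwise disjoint vertex sets: the one- and two-step predecessors of the vertices of $A-\{w_1\}$ (size $\ge 2(|A|-1)$), the set $B$ itself, and the $\ge t-2$ interior vertices $w_{j'+1},\dots,w_{i'-3}$ of the minimal gap. Their disjointness uses $t\ge2$ and the minimality of $i'-j'$, and already yields $q\ge 2|A|+|B|+t-4$ uniformly in $|A\cap B|$; the two indicator terms are then obtained by producing one or two further vertices (such as $w_{q-1}$, the last $A$-vertex, or a suitable earlier $A$-vertex) outside this union. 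That vertex-packing argument sidesteps precisely the $A\cap B$ bookkeeping that your segment-counting approach leaves unresolved; if you want to keep your route, you must supply the strengthened bound on $\lambda$ and the boundary analysis explicitly.
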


\begin{proof}
Part (a) follows from the fact that $A \cap B$ contains at most one vertex. 
To prove part (b), fix a pair $i',j'$ as in the statement of the lemma with $i'-j'$ minimized.
Define
\[X = \{w_{k-1}, w_{k-2}: w_k \in A - \{w_1\}\} \cap V(Q), \qquad Y = \{v_{j'+1}, \ldots, v_{i'-3}\}.\] Since $i'-j' \geq t+1$, $|Y| \geq t-2$. Since $t\geq 2$, $X, Y,$ and $B$ are pairwise disjoint sets. Therefore $q \geq |X| + |B| + |Y| \geq 2(|A|-1) + |B| + t-2 = 2|A| + |B| + t-4$.

If $w_q \notin A$, then $w_{q-1} \notin X \cup B \cup Y$, so we further obtain $q \geq 2|A| + |B| + t-4 + 1$. 

Next suppose $A \cap B = \emptyset$. Let $\alpha$ be the largest index such that $w_\alpha \in A$. If $\alpha <q-1$, then $w_\alpha, w_{q-1}$ are two vertices not in $X \cup B \cup Y$ and we're done. If $\alpha \in \{q-1, q\}$, then $w_\alpha \notin X \cup B \cup Y$. We will find another vertex not in this set.
There exists a largest index $r < \alpha$ such that $w_r \in B$. If there exists $r' < r$ such that $w_{r'} \in A$, then the vertex with the largest such index is also not in $X \cup B \cup Y$, and we are done. If no such $r'$ exists, then $w_1 \notin A$, so $|X| \geq 2|A| - 1$. We instead get $q \geq 2|A| - 1 + |B| + t-2 + |\{w_{\alpha}\}|$.
\end{proof}

The following two extensions of Ore's theorem will be useful.
\begin{thm}[Linial \cite{L}]\label{thm: long cycle Ore}
For each $n\geq 3$, every $2$-connected $n$-vertex graph $G$ contains a cycle with length at least $\min\{ \sigma_2(G), n\}$.	
\end{thm}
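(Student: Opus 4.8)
Since this bound is attributed to Linial, the paper will presumably cite it without proof; I describe how one would prove the circumference bound $c(G)\ge\min\{\sigma_2(G),n\}$ directly. Suppose it fails and let $C$ be a longest cycle, so $c:=|V(C)|<\min\{\sigma_2(G),n\}$; in particular $c<n$ (hence $V(G)\setminus V(C)\ne\emptyset$) and $c<\sigma_2(G)$. Orient $C$ and write $v^+,v^-$ for the successor and predecessor of $v$ on $C$. I would split according to whether $G-V(C)$ is an independent set.

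\emph{Case 1: $G-V(C)$ is independent.} Then every $y\notin V(C)$ has $N(y)\subseteq V(C)$, and $N_C(y)$ has no two cyclically consecutive vertices (else an edge $v_iv_{i+1}$ of $C$ could be replaced by $v_i\,y\,v_{i+1}$, giving a cycle of length $c+1$), so $d(y)=|N_C(y)|\le\lfloor c/2\rfloor$. If $|V(G)\setminus V(C)|\ge 2$, two such vertices $y,y'$ are nonadjacent, so $\sigma_2(G)\le d(y)+d(y')\le c$, a contradiction. If $V(G)\setminus V(C)=\{y\}$, then every vertex of $C$ has all neighbours in $V(C)\cup\{y\}$; fix $v_a\in N_C(y)$, so $v_a^+\not\sim y$ and $d(v_a^+)>c-d(y)$, i.e.\ $v_a^+$ misses fewer than $d(y)-1$ vertices of $V(C)\setminus\{v_a^+\}$. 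Since $\{w^+:w\in N_C(y)\setminus\{v_a\}\}$ has $d(y)-1\ge1$ elements (all distinct from $v_a^+$), we get $v_a^+\sim w^+$ for some $w\in N_C(y)\setminus\{v_a\}$, and (noting $w\notin\{v_a^+,v_a^-\}$) the walk $v_a\,y\,w$, followed by the arc of $C$ from $w$ back to $v_a^+$, the chord $v_a^+w^+$, and the arc from $w^+$ to $v_a$, is a cycle through $V(C)\cup\{y\}$ of length $c+1$ --- again a contradiction.

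\emph{Case 2: $G-V(C)$ contains an edge.} Then some component $H$ of $G-V(C)$ has $|V(H)|\ge 2$, and using $2$-connectedness one obtains a path $P=v_a\,x_1x_2\cdots x_m\,v_b$ with $v_a,v_b\in V(C)$ distinct, $m\ge 2$, and $x_1,\dots,x_m\in V(H)$; choose such a $P$ with $m$ maximum. The crossing argument then gives: $N_C(x_1)$ and $N_C(x_m)$ each have no two consecutive vertices of $C$; and if $v_i\in N_C(x_1)$, $v_j\in N_C(x_m)$ with $v_i\ne v_j$, then the $v_i$--$v_j$ path $v_i\,x_1\cdots x_m\,v_j$ has $m+1$ edges and interior off $C$, so replacing the shorter $v_i$--$v_j$ arc of $C$ by it forces that arc to have at least $m+1$ edges; hence the cyclic distance between $N_C(x_1)$ and $N_C(x_m)$ is $0$ or at least $m+1$. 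As $v_a\in N_C(x_1)$, $v_b\in N_C(x_m)$, $v_a\ne v_b$, this yields $m+1\le d_C(v_a,v_b)\le c/2$, so $c\ge 2m+2$. Thus $A:=N_C(x_1)$, $B:=N_C(x_m)$ and $q:=m$ satisfy the hypotheses of Lemma~\ref{neighbor2}, which bounds $|N_C(x_1)|+|N_C(x_m)|$ from above (in the sub-cases $A=B$, $A\ne B$ with $q=2$, and $A\ne B$ with $q\ge3$). Feeding these bounds, together with $c\ge 2m+2$ and the bound $|N_H(x_1)|+|N_H(x_m)|\le 2(m-1)$ valid when $H=\{x_1,\dots,x_m\}$, one checks in each sub-case that $d(x_1)+d(x_m)\le c$; if moreover $x_1\not\sim x_m$, this contradicts $\sigma_2(G)>c$.

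The two steps I expect to be the main obstacles --- and presumably why the authors cite Linial rather than reprove this --- are: (i) the sub-case $x_1\sim x_m$, where the pair $\{x_1,x_m\}$ is useless and one must either locate a different nonadjacent pair of small degree sum or carry out a further cycle rerouting (a hopping-lemma-type move); and (ii) the degree bookkeeping inside $H$ when $V(H)\supsetneq\{x_1,\dots,x_m\}$, i.e.\ showing that maximality of $m$ really confines the $H$-neighbours of $x_1$ and $x_m$ tightly enough to preserve $d(x_1)+d(x_m)\le c$. Lemma~\ref{neighbor2} is tailored to absorb the ``$N_C(x_1)$ and $N_C(x_m)$ are spread out'' combinatorics once the crossing structure has been extracted, so the remaining work is exactly this casework.
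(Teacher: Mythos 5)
The paper does not prove this statement at all: it is quoted verbatim as Linial's theorem and used as a black box, so there is no internal proof to compare your argument against, and your opening prediction to that effect is correct. Judged on its own terms, your sketch is sound and essentially complete in Case~1 (the independent-remainder case, including the one-vertex subcase, where the counting of non-neighbours of $v_a^+$ against the set $\{w^+: w\in N_C(y)\setminus\{v_a\}\}$ and the resulting rerouted cycle of length $c+1$ both check out). Case~2, however, is not a proof, and you say so yourself; the two obstacles you flag are genuine and not merely technical. First, if $x_1\sim x_m$ --- which you cannot avoid by re-choosing the path, e.g.\ when $H$ is a clique --- the inequality $d(x_1)+d(x_m)\le c$ contradicts nothing, since $\sigma_2$ only constrains nonadjacent pairs. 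Second, when $V(H)\supsetneq\{x_1,\dots,x_m\}$, the bound $d_H(x_1)+d_H(x_m)\le 2(m-1)$ is not justified by maximality of $m$ alone, and without it the degree count against Lemma~\ref{neighbor2} does not close.

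The missing idea, which is how the classical proofs (Linial's, and the presentations by Bondy and by Woodall) handle exactly this case, is to abandon pairs inside $H$ altogether. Let $u_1,\dots,u_k$ be the vertices of $N_C(H)$ in cyclic order ($k\ge 2$ by $2$-connectedness) and let $y\in V(H)$. Maximality of $C$ forces each successor $u_i^+$ to have no neighbour in $H$ and forces $\{u_1^+,\dots,u_k^+\}\cup\{y\}$ to be an independent set (an edge $u_i^+u_j^+$ or $u_i^+y$ yields a longer cycle by the same rerouting you use in Case~1). One then runs the degree count on a guaranteed-nonadjacent pair $(y,u_i^+)$, whose neighbourhoods are confined to $V(H)\cup N_C(H)$ and to $V(C)$ respectively, with the spacing constraints supplied by~\eqref{eq: replace}-type arguments. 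This sidesteps both of your obstacles at once, which is why I would not try to repair your Case~2 as written but rather replace its engine. As it stands, your proposal is an honest partial argument with a correctly identified gap, not a complete alternative proof.
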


\begin{thm}[Ore \cite{O2}]\label{thm: ham conn}
For each $n\geq 2$, every $n$-vertex graph $G$ with $\sigma_2(G) \geq n+1$ is hamiltonian-connected. That is, for every $u,v \in V(G)$, $G$ contains a hamiltonian $u,v$-path.
\end{thm}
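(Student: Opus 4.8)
The plan is to reduce hamiltonian-connectedness to ordinary hamiltonicity via an auxiliary vertex, and then run the classical edge-maximal (Ore-type) argument on the enlarged graph. Fix distinct $u,v\in V(G)$; I want a hamiltonian $u,v$-path. Form $G^+$ from $G$ by adding one new vertex $w$ whose only neighbors are $u$ and $v$. Since $w$ has degree exactly $2$ in $G^+$, every hamiltonian cycle of $G^+$ must use both edges $wu$ and $wv$, so it traverses $u,w,v$ consecutively; deleting $w$ from such a cycle yields precisely a hamiltonian $u,v$-path of $G$. Thus it suffices to prove that $G^+$, which has $n+1$ vertices, is hamiltonian.

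To prove $G^+$ hamiltonian, suppose not, and let $H\supseteq G^+$ be an edge-maximal non-hamiltonian spanning supergraph on $V(G)\cup\{w\}$ (this exists because $K_{n+1}$ is hamiltonian). The engine is the standard rotation fact: for any pair $a,b$ nonadjacent in $H$, maximality gives a hamiltonian $a,b$-path $y_1\cdots y_{n+1}$ of $H$, and writing $S=\{i: ay_{i+1}\in E(H)\}$ and $T=\{i: y_ib\in E(H)\}$, a common index $i\in S\cap T$ would produce the hamiltonian cycle $y_1\cdots y_i y_{n+1}y_n\cdots y_{i+1}y_1$. Since $H$ is non-hamiltonian, $S\cap T=\emptyset$, and as $S,T\subseteq\{1,\dots,n\}$ with $|S|=d_H(a)$, $|T|=d_H(b)$, this forces $d_H(a)+d_H(b)\le n$ for every nonadjacent pair $a,b$ of $H$.

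Now I would exploit the hypothesis. For any $x,y\in V(G)$ with $xy\notin E(G)$ we have $d_H(x)+d_H(y)\ge d_G(x)+d_G(y)\ge \sigma_2(G)\ge n+1>n$, so by the previous paragraph $x,y$ cannot be nonadjacent in $H$. Hence $V(G)$ is a clique in $H$, and in particular every $z\in V(G)$ satisfies $d_H(z)\ge n-1$. Because $H\ne K_{n+1}$ (it is non-hamiltonian), some pair is nonadjacent in $H$, and since $V(G)$ is a clique this pair must be $\{w,b\}$ for some $b\in V(G)\setminus\{u,v\}$. Applying the degree bound to this pair gives $d_H(w)+d_H(b)\le n$, whence $d_H(w)\le n-d_H(b)\le 1$; but $w$ is adjacent to the two distinct vertices $u,v$, so $d_H(w)\ge 2$, a contradiction. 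Therefore $G^+$ is hamiltonian, which by the reduction completes the proof.

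The step I expect to be the genuine obstacle is the low degree of the auxiliary vertex $w$: since $d_{G^+}(w)=2$, the enlarged graph $G^+$ fails the Ore condition outright (the pairs $\{w,z\}$ with $z$ of small degree are deficient), so one cannot simply quote Ore's hamiltonicity theorem for $G^+$. The resolution built into the plan is to first upgrade $V(G)$ to a clique inside the maximal counterexample $H$, and only then observe that any non-edge at $w$ would force $d_H(w)\le 1$, contradicting $d_H(w)\ge 2$; sequencing the argument in this order is exactly what sidesteps the deficiency. A secondary point to state explicitly is that the reduction is faithful, i.e.\ that every hamiltonian cycle of $G^+$ really routes through $u,w,v$ in order, which is immediate from $d_{G^+}(w)=2$. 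The tiny boundary cases ($n=2$, or $u,v$ already adjacent) are handled directly by the same construction.
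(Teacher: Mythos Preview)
Your proof is correct. The paper does not prove this statement at all: it is quoted as a classical result of Ore~\cite{O2} and used as a black box, so there is no ``paper's proof'' to compare against. Your argument---adding an auxiliary degree-$2$ vertex $w$ adjacent to $u,v$, passing to an edge-maximal non-hamiltonian supergraph $H$, and using the standard rotation bound $d_H(a)+d_H(b)\le n$ for nonadjacent pairs---is a clean and standard route. The key ordering you flag (first force $V(G)$ to be a clique in $H$, only then analyze the remaining non-edge at $w$) is exactly what makes the argument go through despite $w$ having low degree, and you have stated it clearly.
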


\section{Structure of $G-C$ for an $L$-maximum cycle $C$}\label{seca}

\begin{definition}
For a given graph $G$ and a vertex set $X\subseteq V(G)$,
a cycle $C$ in $G$ is {\bf $X$-maximal} if the following holds.
\begin{enumerate}[label={\rm (L\arabic*)}]
\item \label{L1} $|C|$ is as large as possible,
\item \label{L2} subject to \ref{L1}, $|C\cap X|$ is as large as possible.
\end{enumerate}
\end{definition}

For a given graph $G$, let
$L = L(G)$ be the set of vertices in an $n$-vertex graph $G$ with degree less than $\sigma_2(G)/2$. 
We later utilize several properties of  $L$-maximum cycles.
In this section, we prove the following lemma.

\begin{lemma}\label{mainlem1} Let $n\geq 13$.
Suppose that $G$ is an $n$-vertex $2$-connected graph with $\sigma_2(G)\geq \frac{2n-3}{3}$. If $G$ does not contain a spanning jellyfish, then $G-C$ contains no edges for every $L(G)$-maximal cycle $C$.\end{lemma}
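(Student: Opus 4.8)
The plan is to argue by contradiction: suppose $G$ is a counterexample, so $G$ is $2$-connected with $\sigma_2(G)\geq\frac{2n-3}{3}$, $G$ has no spanning jellyfish, but some $L(G)$-maximal cycle $C$ has an edge in $G-C$. Let $H$ be a component of $G-C$ containing an edge, so $|H|\geq 2$. Write $C=v_1\dots v_c$ and let $D=G-C$. Since $C$ is a longest cycle (property \ref{L1}), $H$ has no chord-path back to $C$ that would let us reroute $C$ through more vertices; the standard way to exploit this is to pick a path $P$ in $H$ on $p$ vertices that is as long as possible, and look at the neighborhoods on $C$ of its two endpoints $x,y$. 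Because $C$ is longest, the sets $N_C(x)$ and $N_C(y)$ consist of nonconsecutive vertices of $C$, and—by the usual rotation/extension argument along $P$—if $v_i\in N_C(x)$ and $v_j\in N_C(y)$ are distinct then the two arcs of $C$ between $v_i$ and $v_j$ each have length at least $p+1$ (otherwise splice $P$ in and enlarge $C$). This is exactly the hypothesis needed to apply Lemma~\ref{neighbor2} with $A=N_C(x)$, $B=N_C(y)$, $q=p$. If instead $H$ itself contains a cycle or $x,y$ have a common neighbor structure, one gets an even longer cycle directly.

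Next I would turn the conclusion of Lemma~\ref{neighbor2} into a degree bound. Every vertex of $H$ off the path $P$ has all its neighbors inside $H$, hence degree at most $|H|-1\leq |D|-1$; in particular such a vertex has small degree, so it lies in $L(G)$. The endpoints $x,y$ have $d(x)=|N_C(x)|+d_H(x)$ and similarly for $y$, and $|N_C(x)|+|N_C(y)|$ is bounded above by roughly $c/(p+1)\cdot(\text{const})$ via Lemma~\ref{neighbor2}. Meanwhile $x$ and $y$ (or $x$ and a suitable non-neighbor of it, exploiting that $P$ is a longest path in $H$ so $x,y$ are non-adjacent when $|H|\geq 3$, and handling $|H|=2$ separately) form a non-adjacent pair, so $d(x)+d(y)\geq\frac{2n-3}{3}$. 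Combining these with $c+|D|=n$ and $|D|\geq|H|\geq p$ should force $p$ to be small and $c$ to be large, and then push $c$ so large that $C$ together with $H$ already spans "almost everything," at which point I use $2$-connectedness: there are two vertex-disjoint $C$–$H$ paths, and with $C$ nearly spanning one can absorb the rest of $D$ to either extend $C$ or build a spanning jellyfish (a single long cycle plus the leftover low-degree vertices all attached at one well-chosen hub of $C$), contradicting the choice of $G$. The role of property \ref{L2} is to break ties: when two longest cycles are available, the one meeting $L(G)$ maximally lets us assume the "bad" low-degree vertices are on $C$, not stranded in $D$, which is what makes the counting close.

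The main obstacle I expect is the bookkeeping in the case $p$ small (say $p=2$, i.e. $H$ is a single edge), where Lemma~\ref{neighbor2}\ref{N2}–\ref{N3} give the weakest bounds and the constants are delicate—this is presumably why the lemma statement carries all those $\mathbbm{1}_{q=2}$ correction terms and why Lemma~\ref{pathbound} is stated with its extra $+\mathbbm{1}_{w_q\notin A}+\mathbbm{1}_{A\cap B=\emptyset}$ refinements. A secondary difficulty is the endgame: once $c$ is forced to be large, converting "long cycle $+$ one nontrivial component $+$ scattered low-degree vertices" into an actual spanning jellyfish needs a clean argument that all the stray vertices can be hung off one common vertex of $C$; here I would use $\sigma_2(G)\geq\frac{2n-3}{3}$ to show every vertex of $D$ has many neighbors on $C$, a pigeonhole/interval argument on $C$ to locate a common attachment point, and the hamiltonian-connectivity tool Theorem~\ref{thm: ham conn} on a dense subsegment to reroute $C$ through the rest. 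Theorem~\ref{thm: long cycle Ore} gives the starting point $c\geq\sigma_2(G)\geq\frac{2n-3}{3}$, which should already make these arguments tight enough to work for $n\geq 13$.
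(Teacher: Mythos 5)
Your opening moves do match the paper's: assume a nontrivial component $H$ of $G-C$, take a longest path $P$ in $H$, use rotation/extension to show the endpoint neighborhoods on $C$ are spread out (this is \eqref{eq: replace} in the paper), and feed $A=N_C(x)$, $B=N_C(y)$ into Lemma~\ref{neighbor2}. But two of your intermediate claims are false. First, a vertex of $H$ off the longest path $P$ of $H$ can perfectly well have neighbors on $C$; maximality of $P$ inside $H$ says nothing about edges to $C$, so such vertices need not be low. Second, the endpoints of a longest path in $H$ need not be non-adjacent --- in fact the paper proves (Claim~\ref{cl: complete}) that every component of $G-C$ is complete, so in the main case the endpoints \emph{are} adjacent and the pair $(x,y)$ gives you no $\sigma_2$ bound at all; the paper must instead extract degree sums from pairs such as an endpoint of $P$ together with the cycle-successor of an attachment vertex of $H$ (the high vertices $v_2,v_{k+1}$ and the sets $X_1,X_2,Y_1,Y_2,W$ in the final claim of Section~3.3).

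More importantly, your endgame --- ``push $c$ so large that $C$ nearly spans $G$, then absorb the rest'' --- fails on the actual extremal configuration. The counting does not force $c$ close to $n$: it funnels everything into what the paper calls a $2$-frame, where $c\approx 2n/3$, $H$ is a complete graph on roughly $n/3$ vertices, and $|N_C(H)|=2$, which is exactly the shape of the sharpness example in Fig.~2. In that situation $C$ is far from spanning, no common attachment hub for $V(H)$ exists, and the contradiction requires the separate segment analysis of $I_1,I_2$ in Section~3.4, which trades cycle length against the number of low vertices using the tie-breaking condition \ref{L2}. Your sketch neither anticipates this configuration nor supplies tools to handle it (your proposed uses of Theorem~\ref{thm: ham conn} and pigeonhole on $C$ are not developed enough to substitute), so the proposal has a genuine gap at its core, in addition to the intermediate steps above. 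The paper's route --- components are complete, then $G-C$ has one component, then $C$ is a $2$-frame, then $2$-frames yield jellyfish --- is the content you would still need to produce.
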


Suppose that $G$ is a $2$-connected $n$-vertex graph with $\sigma_2(G) \geq \frac{2n-3}{3}$ and no spanning jellyfish.  By Theorem~\ref{thm: long cycle Ore}, $|C|\geq \sigma_2(G)$.  We call a vertex $v$ {\bf low} if $d(v) \leq (n-2)/3$, {\bf normal} if $d(v) \geq (n-1)/3$, and {\bf high} if there exists a low vertex that is not adjacent to $v$. In particular, $d(v) \geq \lceil n/3 \rceil$ holds for every high vertex $v$. Then $L=L(G)$ denotes the set of low vertices in $G$. By the definition of $\sigma_2(G)$, all low vertices are adjacent to each other and $G[L]$ is a complete graph. 

Observe that if $u,v$ are non-adjacent vertices say with $d(u) \leq d(v)$, then either they are both normal, or $u$ is low and $v$ is high, in particular, $d(v)\geq \frac{n}{3}$. Thus in both cases, 
\begin{equation}\label{+2d}
d(u) + 2d(v) \geq (3n-3)/3 = n-1.
\end{equation}

We collect the following two definitions.
\begin{definition}
	Given an $L$-maximum cycle $C$, we call a pair $(x,y)$ of distinct vertices $C$-{\bf usable}, if $x,y\notin V(C)$,
each of $x$ and $y$ has a neighbor in $C$, and $|N_C(x)\cup N_C(y)|\geq 2$.
\end{definition}

\begin{definition}
	We say that a cycle $C$ is a {\bf semi-$2$-frame} if $G-C$ contains a single component $H$ with exactly $s:=\lceil \frac{n}{3} -2 \rceil$ vertices and $|N_C(H)|=2$.
	In addition, if $H=K_s$ and $H$ contains at least one low vertex, then it is a {\bf $2$-frame}. 
\end{definition}

Suppose $C = v_1, \ldots, v_c, v_1$. We write $v_{c+i}=v_i$ for each $i\geq 0$.  For a subset $X\subseteq V(C)$ and $k\in \mathbb{N}$, %we write $v^{+k}_i=v_{k+i}$ and $v^{-k}_{i}=v_{i-k}$ and 
\begin{align*}
X^{+k}= \{v_{i+k}: v_i\in X\} \text{ and } X^{-k}= \{v_{i-k}: v_i\in X\}.
\end{align*} When $X$ contains a single vertex $x$, we write $x^{+k}$ or $x^{-k}$ to mean $\{x\}^{+k}$ or $\{x\}^{-k}$, respectively. 
For brevity, we write superscripts $+$ and $-$ instead of $+1$ and $-1$, respectively.

To prove Lemma~\ref{mainlem1}, we assume there exists a component in $G-C$ with at least two vertices. From this assumption, we will deduce that $C$ must be a $2$-frame, and prove that it also contains a spanning jellyfish.

\subsection{All components in $G-C$ are complete.}

In order to reduce our case to $2$-frames, we first aim to prove that all components in $G-C$ are complete graphs. %We first make t
The following observation is helpful.
\begin{align}\label{eq: replace}
\begin{split}
	&\textit{For a path $P=u_1,\dots, u_s$ in $G-C$ and
distinct $v_i \in N_C(u_1) , v_j \in N_C(u_s)$,} 
\\ &\textit{the distance between $v_i$ and $v_j$ in $C$ is at least $s+1+I_{P\cap L}$. }	
\end{split}
\end{align}

Indeed, assuming $i<j$, if the segment between $v_i$ and $v_j$ contains no low vertices or $I_{P\cap L}=0$, then 
the cycle $v_1,\dots, v_i, u_1,\dots, u_s, v_j,\dots, v_s$ yields a longer cycle than $C$ or a cycle containing more low vertices, contradicting the choice of $C$ as an $L$-maximum cycle $C$.
Otherwise, choose a low vertex $v_{\ell}$ with $i<\ell < j$ and a low vertex $u_{\ell'}$ in $P$. As all low vertices are adjacent, either 
$v_1,\dots, v_i, u_1,\dots, u_{\ell'}, v_\ell, \dots, v_s$ or 
$v_1,\dots, v_{\ell}, u_{\ell'}, u_{\ell'+1}, \dots, u_{s},  v_j, \dots, v_s$
 yields a longer cycle, confirming \eqref{eq: replace}. 
 
 Using~\eqref{eq: replace}, we can obtain some properties of paths in $G-C$ as follows.

\begin{claim}\label{app1}
Let $P=u_1,\ldots,u_s$ be a path in $G-V(C)$. If $(u_1,u_s)$ is $C$-{usable},
then we have the following.
    \begin{enumerate}[label={\rm (A\arabic*)}]
\item \label{A1} $d_C(u_1)+d_C(u_s)\leq  \frac{2n+8}{3}-2s$.
\item \label{A2} $s\leq \frac{n-2}{3} -\frac{2}{3} \cdot  I_{P\cap L}$.
\item \label{A3} If $\min\{d_C(u_1),d_C(u_s)\}\geq \frac{n-1}{3}-(s-1)$, then $N_C(u_1)=N_C(u_s)$. Moreover, if  in addition $P\cap L\neq \emptyset$ holds, then $C$ is a semi-$2$-frame.\end{enumerate}
\end{claim}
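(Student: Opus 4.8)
The plan is to analyze the structure of the cycle $C$ together with the path $P = u_1, \ldots, u_s$, using the key observation \eqref{eq: replace}. Since $(u_1, u_s)$ is $C$-usable, both $u_1$ and $u_s$ have neighbors in $C$, and $|N_C(u_1) \cup N_C(u_s)| \geq 2$. Set $A = N_C(u_1)$ and $B = N_C(u_s)$. By \eqref{eq: replace}, for any $v_i \in A$ and $v_j \in B$ with $v_i \neq v_j$, the distance in $C$ between $v_i$ and $v_j$ is at least $s + 1 + I_{P\cap L} \geq s+1$. Also $A$ and $B$ each contain no two consecutive vertices of $C$ (apply \eqref{eq: replace} with the trivial sub-path $u_1$, resp.\ $u_s$, which gives distance $\geq 2$ between distinct neighbors of the same endpoint — or note that two consecutive neighbors would immediately give a longer cycle). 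So I am exactly in the situation of Lemma~\ref{neighbor2} with the cycle $C$ of length $c = |C| \geq \sigma_2(G) \geq \frac{2n-3}{3}$ and parameter $q = s$ (assuming $s \geq 2$; the case $s=1$ is easy since then $P$ is a single vertex and \ref{A1}, \ref{A2} are immediate, while \ref{A3} is vacuous or trivial).

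For \ref{A1}: apply Lemma~\ref{neighbor2}. If $A = B$, part \ref{N1} gives $c \geq (s+1)|A|$, hence $|A| \leq c/(s+1)$, and $d_C(u_1) + d_C(u_s) = 2|A| \leq 2c/(s+1)$; then use $c \leq n$ (in fact $c \leq n - s < n$ since $P$ is disjoint from $C$) to get the desired bound after a short computation. If $A \neq B$, part \ref{N2} bounds $|A| + |B| = d_C(u_1) + d_C(u_s)$ from above in terms of $c$ and $s$; again combine with $c \leq n - s$. I expect the arithmetic here to be the part requiring the most care, since one must check the bound $\frac{2n+8}{3} - 2s$ holds in each sub-case (and the coefficient $2 - \frac12\mathbbm{1}_{s=2}$ in \ref{N2} means the $s=2$ case needs separate attention), but it should go through because $\frac{2n+8}{3} - 2s$ is precisely calibrated against $c \leq n - s$ and $c \geq (s+1)|A|$-type estimates. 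For \ref{A2}: rewrite \ref{A1} as $2s \leq \frac{2n+8}{3} - (d_C(u_1) + d_C(u_s)) \leq \frac{2n+8}{3} - 2$ since $d_C(u_1), d_C(u_s) \geq 1$ (each endpoint has a $C$-neighbor); this gives $s \leq \frac{n+1}{3} - ?$ which is slightly weaker than claimed, so the sharper route is: if $I_{P\cap L} = 1$, then in \eqref{eq: replace} distances are $\geq s+2$, so I should redo Lemma~\ref{neighbor2} with $q = s+1$, picking up the extra savings; combining with $c \leq n - s$ (or better, noting $G-C$ contains the $s$ vertices of $P$ plus possibly more) yields the $-\frac23 I_{P\cap L}$ term.

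For \ref{A3}: suppose $\min\{d_C(u_1), d_C(u_s)\} \geq \frac{n-1}{3} - (s-1)$, so $d_C(u_1) + d_C(u_s) \geq \frac{2n-2}{3} - 2(s-1) = \frac{2n-2}{3} - 2s + 2$. Comparing with the case $A \neq B$ bound from \ref{N2}, which gives roughly $d_C(u_1)+d_C(u_s) = |A|+|B| < \frac{c}{2} + O(s) \leq \frac{n-s}{2} + O(s)$ — this is strictly smaller than $\frac{2n-2}{3} - 2s + 2$ for $n$ large (using $n \geq 13$ and the bound on $s$ from \ref{A2}), a contradiction. Hence $A = B$, i.e., $N_C(u_1) = N_C(u_s)$. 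For the "moreover" part, suppose additionally $P \cap L \neq \emptyset$. Then distances between distinct vertices of $A = B$ in $C$ are at least $s + 2$ (by \eqref{eq: replace} with $I_{P\cap L}=1$), so by \ref{N1}-type reasoning $|A| \leq c/(s+2)$; but the degree hypothesis forces $|A| = d_C(u_1) \geq \frac{n-1}{3} - (s-1)$, and combining these two inequalities with $c \leq n$ and $s \leq \frac{n-2}{3} - \frac23$ (from \ref{A2}) pins down $|A| = 2$, $c = n - s$ exactly (so $G - C$ is exactly $V(P)$, a single component $H$ of size $s = \lceil n/3 - 2\rceil$ — one must check $s$ takes this value, which follows from the squeeze), and $|N_C(H)| = 2$; that is precisely the definition of a semi-$2$-frame. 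The main obstacle throughout is bookkeeping: making sure all the floor/ceiling and parity issues (especially the special behavior at $q=2$ in Lemma~\ref{neighbor2}) are handled, and confirming that the various linear inequalities in $n$ and $s$ are tight enough given only $n \geq 13$ and $s \geq 2$; I would organize this by first disposing of $s \leq 1$, then treating $s \geq 2$ uniformly via Lemma~\ref{neighbor2}, keeping $I_{P\cap L}$ as a bookkeeping parameter throughout.
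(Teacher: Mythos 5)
Your overall strategy---feeding $A=N_C(u_1)$, $B=N_C(u_s)$ into Lemma~\ref{neighbor2} with $q=s$ (or $q=s+1$ when $P\cap L\neq\emptyset$) and combining with $c\le n-s$---is exactly the paper's, and your outlines of \ref{A1} and \ref{A3} follow the same lines. But your plan for \ref{A2} has a genuine gap, and it is not just bookkeeping. First, deriving \ref{A2} from \ref{A1} via $d_C(u_1)+d_C(u_s)\ge 2$ only yields $s\le\frac{n+1}{3}$, which you correctly notice is weaker than the claimed $\frac{n-2}{3}$; your proposed repair (redoing the estimate with $q=s+1$) only recovers the $-\frac23$ term when $I_{P\cap L}=1$ and does nothing for the base case $I_{P\cap L}=0$. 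Second, and more seriously, the order of deduction is circular: in the case $A=B$ your proof of \ref{A1} comes down to showing $\frac{2n+2s^2}{s+1}\le\frac{2n+8}{3}$, and this quadratic inequality in $s$ holds precisely for $2\le s\le\frac{n-2}{3}$, with equality at both endpoints. So \ref{A1} is simply false without first knowing $s\le\frac{n-2}{3}$ (e.g.\ at $s=\frac{n+1}{3}$ the left side strictly exceeds the right), which means you cannot prove \ref{A1} first and then extract \ref{A2} from it. The missing idea is the paper's direct, independent proof of \ref{A2}: $C$-usability gives $|A\cup B|\ge 2$, so $A\cup B$ cuts $C$ into at least two segments, and at least two of these segments join a vertex of $A$ to a vertex of $B$ and hence have at least $s+1+I_{P\cap L}$ edges by \eqref{eq: replace}; therefore $n-s\ge c\ge 2(s+1+I_{P\cap L})$, i.e.\ $s\le\frac{n-2}{3}-\frac23 I_{P\cap L}$. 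Once this is in hand, your computations for \ref{A1} (and the convexity/endpoint evaluation over $s\in[2,\frac{n-2}{3}]$) go through.

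A smaller caution on \ref{A3}: the comparison you describe as ``strictly smaller for $n$ large'' is in fact equality-tight---in the subcase $(s,J_{A,B})=(2,1)$ the contradiction obtained is $n-2\ge n-1$, a margin of exactly one---so the argument must be run with the precise constants of \ref{N2} (including the $-\frac12\mathbbm{1}_{q=2}$ correction and the value of $J_{A,B}$) rather than an asymptotic estimate. The direction of your argument is right, and the case split you flag ($s=2$ versus $s\ge3$, $J_{A,B}\in\{1,2\}$) is exactly what closes it, but ``$n\ge 13$ and $n$ large'' will not substitute for that case analysis.
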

\begin{proof} 
Let $A=N_C(u_1)$ and $B=N_C(u_s)$.
By the maximality of $C$ and  \eqref{eq: replace} with $s=1$, each of $A$ and $B$ consists of nonconsecutive vertices on $C$.
Since $(u_1,u_s)$ is $C$-{usable}, $A\cup B$ partition $C$ into segments and at least two of them has $s+1+I_{P\cap L}$ edges. Thus $n\geq 3s+2+2\cdot I_{P\cap L}$, hence $s\leq \frac{n-2}{3}-\frac{2}{3}\cdot I_{P\cap L}$, confirming \ref{A2}.
Moreover, $C$-usability implies that the cycle $C$ with the sets $A$ and $B$ satisfies the conditions of Lemma~\ref{neighbor2}.

If $A\neq B$, then Lemma~\ref{neighbor2} implies
\begin{align}\label{eq:AneqB}
	n-s\geq c \geq \left(2-\frac{1}{2}\cdot \mathbbm{1}_{s=2}\right)\left(|A|+|B|\right) +(2s-6) + 2J_{A,B}+ \mathbbm{1}_{s=2}\cdot(\frac{5}{2}-J_{A,B}).
\end{align}
If $s=2$, then~\eqref{eq:AneqB} becomes
$|A|+|B|\leq \frac{2}{3}n -2 \leq \frac{2}{3}n -2s+\frac{8}{3}.$
If $s\geq 3$, then we have 
$$|A|+|B|\leq \frac{n-3s+4}{2}\leq \frac{2}{3}n -2s+\frac{8}{3},$$
where the final inequality holds as $s\leq \frac{n-2}{3}$ by (A2). This confirms \ref{A1} when $A\neq B$.

Consider the case $A=B$. Then Lemma~\ref{neighbor2} yields $n-s\geq c\geq \frac{s+1}{2}(|A|+|B|)$, hence 
$$|A|+|B|\leq \frac{2(n-s)}{s+1} = \frac{2(n+s^2)}{s+1} -2s.$$
As  $2\leq s\leq \frac{n-2}{3}$ and the function $f(s)=\frac{2n+2s^2}{s+1}$ is convex, it  maximizes at either $s=2$ or $s=\frac{n-2}{3}$.  Since 
$f(2)= f(\frac{n-2}{3})=\frac{2n+8}{3}$, we get $f(s)\leq \frac{2n+8}{3} - 2s$ yielding \ref{A1}.

To prove \ref{A3}, assume $d_C(u_1), d_C(u_s)\geq \frac{n-1}{2} -(s-1)$ and $A\neq B$.
Because we assume $A\neq B$, if $|A|=|B|$ then $J_{A,B}=2$. Thus $J_{A,B}\leq 1$ implies $|A|+|B|\geq \frac{2n-2}{3}-2(s-1)+1$.
By this fact, if $(s,J_{A,B})=(2,1)$ or $(2,2)$, then  \eqref{eq:AneqB} yields
$n-2\geq n-1$ or $n-2\geq n-\frac{3}{2}$, respectively, a contradiction.
If $s\geq 3$, then for each case $J_{A,B}\in \{1,2\}$, \eqref{eq:AneqB} yields 
$n-s\geq \frac{4n-4}{3}-2s+2$, a contradiction to $s\leq \frac{n-2}{3}$.
 Thus $d_C(u_1), d_C(u_s)\geq \frac{n-1}{2} -(s-1)$ implies $A=B$, confirming the first statement of \ref{A3}.

If  in addition $P$ contains a low vertex, then Lemma~\ref{neighbor2}~\ref{N1} together with \eqref{eq: replace} implies that 
$n-s \geq \frac{s+2}{2}(|A|+|B|)$ instead.
Thus 
$$|A|\leq \frac{(n-s)}{s+2}= \frac{n+s^2+s}{s+2} -s.$$
Again,  $2\leq s\leq\frac{n-4}{3}$ in this case, and $f(s)=\frac{n+s^2+s}{s+2}$ is convex.
If $s$ is between $2$ and $\frac{n-5}{3}$, then this is maximized at $\frac{n-5}{3}$ as $f(2)< f(\frac{n-5}{3}) \leq \frac{n+1}{3}+\frac{3}{n+1}$, which is a contradiction because $2|A|=|A|+|B|\geq \frac{2n-2}{3} -2(s-1)> 2f(\frac{n-5}{3}) -2s$. Thus $s= \frac{n-4}{3}$ must hold with
$|A|\leq \frac{(n-s)}{s+2}\leq 2$.
As $G$ is $2$-connected, $A=B$ must have size exactly two. As $s=\frac{n-4}{3}\geq \lceil \frac{n}{3}-2\rceil$, this implies that the path together with $C$ covers all vertices in $G$, thus $C$ is a semi-$2$-frame.
This proves the second statement of \ref{A3}.
\end{proof}

We now prove the following claim regarding the length of maximal paths in $G-V(G)$.
\begin{claim}\label{cl: s}
For any component $H$ of $G-C$ with $|H|\geq 2$, 
there exists a longest path $P$ in $H$ whose pair of the end vertices of $P$ is $C$-usable.
Moreover, each path in $G-C$ has  at most $\frac{n-2}{3} -\frac{2}{3}\cdot I_{P\cap L}$ vertices.
\end{claim}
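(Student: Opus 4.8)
The plan is to prove Claim~\ref{cl: s} in two movements: first, to locate a longest path $P$ of $H$ whose endpoints form a $C$-usable pair; second, to upgrade the length bound \ref{A2} from ``a path with a $C$-usable endpoint pair'' to ``every path in $G-C$.''

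\textbf{Finding a $C$-usable longest path.} Fix a component $H$ of $G-C$ with $|H|\geq 2$ and take any longest path $P = u_1,\dots,u_s$ in $H$, so $s\geq 2$. Since $G$ is $2$-connected and $s\geq 2$, the vertex set $V(H)$ has at least two neighbors on $C$; I would first argue that $u_1$ and $u_s$ each have a neighbor on $C$ — indeed, by maximality of $P$ inside $H$, every neighbor of $u_1$ (resp.\ $u_s$) outside $P$ lies on $C$, and if $u_1$ (or $u_s$) had no neighbor on $C$ at all then, using $2$-connectedness of $G$, there would be a path from $u_1$ leaving $H$ only through $C$; combined with a $C$-neighbor of some other vertex of $H$ one produces a longer path in $H$ — so $u_1,u_s$ each have a $C$-neighbor. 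It then remains to guarantee $|N_C(u_1)\cup N_C(u_s)|\geq 2$. If this fails, then $N_C(u_1)=N_C(u_s)=\{v\}$ is a single vertex. Since $G$ is $2$-connected, $V(H)$ must have a second neighbor $w\neq v$ on $C$, say $w\in N_C(u_i)$ for some internal $u_i$. Rotating $P$ about $u_i$ (take the path $u_{i},u_{i-1},\dots,u_1$ or $u_i,u_{i+1},\dots,u_s$, whichever is longer, has at least $\lceil s/2\rceil +1$ vertices; more carefully, splice: the walk $u_s,\dots,u_i,\dots,u_1$ has a $C$-neighbor $v$ at both ends and $w$ in the middle) — here I would instead take the longest path through $u_i$ with one endpoint $u_1$ or $u_s$; among the two sub-paths of $P$ from $u_i$ to an endpoint, the longer one has at least $s/2 + 1\geq 2$ vertices, its two endpoints are $u_i$ and an endpoint of $P$, and their $C$-neighbor sets contain $w$ and $v$ respectively, hence have union of size $\geq 2$. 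Iterating this finitely often (each step either certifies $C$-usability or produces a path whose endpoint has a strictly larger $C$-neighbor set), we reach a longest path of $H$ whose endpoint pair is $C$-usable. (A cleaner phrasing: choose $P$ a longest path in $H$ that, subject to being longest, maximizes $|N_C(u_1)\cup N_C(u_s)|$; the above rotation argument shows this maximum is $\geq 2$.)

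\textbf{The uniform length bound.} Once we have such a $P$, Claim~\ref{app1}\ref{A2} gives $s\leq \frac{n-2}{3}-\frac23 I_{P\cap L}$. Now let $P'$ be an \emph{arbitrary} path in $G-C$, with $|P'|=s'$. I claim $s' \le \frac{n-2}{3} - \frac23 I_{P'\cap L}$. If $P'$ lies inside a single component $H'$ with $|H'|\ge 2$, then a longest path of $H'$ has length $\geq s'$ and (by the first part) we may take it $C$-usable, so \ref{A2} applied to that path gives $s' \leq |H'\text{-longest path}| \leq \frac{n-2}{3} - \frac23 I$, where $I$ is $1$ as soon as the longest path meets $L$; since $P'\subseteq$ that component and $G[L]$ is complete, if $P'\cap L\neq\emptyset$ then the longest path can be chosen to still meet $L$ (or one observes $s'\le$ the bound with $I_{P'\cap L}$ directly because $P'\cap L\neq\emptyset$ forces the component's longest path to contain a low vertex). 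Paths contained in a trivial component are single vertices and the bound is immediate for $n\geq 13$.

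\textbf{Main obstacle.} The delicate point is the rotation/splicing step that produces a $C$-usable endpoint pair: one must make sure the rotated object is genuinely a path (no repeated vertices) of the \emph{same maximum length} in $H$, or else argue via a well-chosen extremal $P$ (longest, then most $C$-neighbors at the ends) so that the rotation contradicts extremality rather than needing to be iterated. I also need to be careful that ``$C$-usable'' requires $x\ne y$ and both having a $C$-neighbor — the $2$-connectedness of $G$ is exactly what prevents $N_C(V(H))$ from being a single vertex and is the only place it enters here. Propagating the low-vertex indicator $I_{P\cap L}$ correctly between $P'$ and the chosen longest path is a minor bookkeeping nuisance but follows since $L$ induces a clique and low vertices of $H'$ can always be absorbed into a longest path of $H'$.
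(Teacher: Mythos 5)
Your first movement contains a genuine gap at its very first step: you assert that the maximality of $P$ in $H$ together with $2$-connectedness forces \emph{both} endpoints $u_1,u_s$ to have a neighbor on $C$. This is false. Maximality only gives $N(u_1)\subseteq V(P)\cup V(C)$; it is entirely possible that $N(u_1)\subseteq V(P)$, i.e.\ that an endpoint of every longest path of $H$ has no $C$-neighbor at all, while $H$ still attaches to $C$ through two interior vertices of $P$ (so $2$-connectedness is satisfied). Your sketched repair --- ``a path from $u_1$ leaving $H$ only through $C$, combined with a $C$-neighbor of some other vertex of $H$, produces a longer path in $H$'' --- does not work, because such a path exits $H$ via vertices already on $P$ and yields no extension of $P$ inside $H$. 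This bad configuration is precisely the hard case, and it cannot be excluded by connectivity alone: the paper devotes the bulk of its proof (its Cases 1 and 2) to it, and the argument there leans essentially on the degree hypothesis $\sigma_2(G)\geq\frac{2n-3}{3}$ and on $|C|\geq\sigma_2(G)$ --- e.g.\ if $u_1u_s\notin E(G)$ and $u_s$ has all neighbors in $P$, then $2(s-2)\geq\sigma_2(G)$ forces $s$ too large; when $u_s$ is normal one builds explicit cycles of length at least $1+\lceil c/2\rceil+d(u_s)+\ell_1$ to contradict $c\leq n-s$; when $u_s$ is low one uses that $L$ induces a clique to propagate ``lowness'' along rotated paths and contradict $2$-connectedness. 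None of this machinery appears in your proposal, and no argument using only $2$-connectedness can replace it.

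Your rotation step has a second, independent flaw: when $N_C(u_1)=N_C(u_s)=\{v\}$ and some interior $u_i$ sees $w\neq v$, you pass to a sub-path of $P$ from $u_i$ to an endpoint. That sub-path is shorter than $P$, hence is not a longest path of $H$, so it cannot witness the claim; and the proposed iteration (``each step either certifies usability or strictly enlarges the endpoint's $C$-neighborhood'') is not justified because the rotated object need not preserve length. (For what it is worth, the case $N_C(u_1)=N_C(u_s)=\{v\}$ with both degrees positive is actually not the problem --- the paper handles the genuinely hard case $d_C(u_s)=0$ --- but as written your argument does not dispose of either case.) The second movement, deducing the uniform bound for arbitrary paths from \ref{A2} applied to a $C$-usable longest path, is essentially the paper's own one-line reduction and is fine modulo the bookkeeping of $I_{P\cap L}$ that you flag.
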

\begin{proof}
By Claim~\ref{app1}~\ref{A2}, the first statement implies the second statement.
To show the first statement, take a component $H$ of $G-C$ and assume for a contradiction that the 
pair of the end vertices of each
 longest path in $H$ is not $C$-usable. Suppose each longest path in $H$ has $s$ vertices.
Recall that $|C|\geq \frac{2n-3}{3}$, so $s\leq \frac{n}{3}+1$.

First, we claim that there exists a longest
path $P=u_1,\dots, u_s$ in $H$ with $d_C(u_1)>0$.
Indeed, if $u_1u_s\in E(G)$, then the $2$-connectedness of $G$ ensures a path from $P$ to $C$, yielding a desired path.
Otherwise, the maximality of $P$ ensures 
$2(s-2)\geq d(u_1)+d(u_s)\geq \sigma_2(G)\geq \frac{2n-3}{3}$, implying $s\geq \frac{n}{3}+\frac{3}{2}$, a contradiction to $s\leq \frac{n}{3}+1$. 
Thus there exists a longest path $P=u_1,\dots, u_s$ in which $u_1$ has a neighbor, say $v_c$, in $C$. Let us denote $u_0=v_c$.  By the maximality of $P$, 
%$u_1$ has no neighbors in the component of $G-C$ containing $P$, so 
 all neighbors of $u_s$ are in $V(C)\cup V(P)$.

Among those paths, choose a path $P=u_1,\dots, u_s$ so that if possible, $d(u_s)$ is normal.
If $u_s$ has a neighbor in $C-u_0$, then $(u_1,u_s)$ is a $C$-usable pair, so we are done. Assume that $u_s$ has no neighbors in $C-u_0$.
Let $u_{\ell_1},\dots, u_{\ell_t}$ be the neighbors of $u_s$ with $0\leq \ell_1\leq \dots \leq \ell_t$. Then 
\begin{equation}\label{dus}
\mbox{\em
$ s\geq d(u_s)+\ell_1$ with equality only when $N(u_s)=\{u_{\ell_1},u_{\ell_1+1},\ldots,u_{s-1}\}$.
}
\end{equation}
{\bf Case 1:} $u_s$ is normal.
Since $G$ is 2-connected, $G-v_c$ has a path $Q$ from some $v_i \in V(C)$ to some $u_j \in V(P)$  internally disjoint from $V(C) \cup V(P)$.
 Choose such $Q$ so that the index  $j$ is largest possible. 

 By symmetry, we may assume $i \geq \lceil c/2 \rceil$.  If $\ell_1 < j$, let $j'$ be the largest index smaller than $j$ such that $u_{j'} \in N(u_s)$.  Then the cycle \[C'=v_c, v_1, \ldots, v_i, Q, u_j, u_{j+1}, \ldots, u_s, u_{j'}, \ldots, u_1, v_c\] has at least $1 + \lceil c/2 \rceil + d(u_s)+\ell_1$ vertices. Since $|C'| \leq c\leq n-s$, by~\eqref{dus} we get
 $1+d(u_s)+\ell_1 \leq \frac{c}{2}\leq \frac{n-d(u_s)-\ell_1}{2}$ and hence $d(u_s)+\ell_1\leq \frac{n-2}{3}$. 
 By the case, this is not true, which means $j\leq \ell_1$, in particular $\ell_1\geq 1$.
 Then by~\eqref{dus}, $s\geq d(u_s)+\ell_1 \geq \frac{n+2}{3}+\ell_1-1$. Since $s\leq n-c\leq \frac{n}{3}+1$, this is possible only if $j=1$, $\ell_1=1$ and $N(u_s)=\{u_1,\ldots,u_{s-1}\}$. Moreover, by the maximality of $s$, $Q$ is just the edge $v_iu_1$. Since $G-u_1$ is connected, it  has a path $Q'$ from some $v_{i'} \in V(C)$ to some $u_{j'} \in V(P)-u_1$  internally disjoint from $V(C) \cup V(P)$. By the symmetry between $v_c$ and $v_i$, we may assume that $i'\neq c$, and then assume that $i' \geq \lceil c/2 \rceil$. 
 As $u_{j'-1}\in N(u_s)$, similarly to $C'$ the cycle \[C''=v_c, v_1, \ldots, v_{i'}, Q', u_{j'}, u_{j'+1}, \ldots, u_s, u_{j'-1}, \ldots, u_1, v_c\] has at least $1 + \lceil c/2 \rceil + d(u_s)+\ell_1$ vertices, and we again get that
 $d(u_s)+\ell\leq \frac{n-2}{3}$, a contradiction.
 
{\bf Case 2:} $u_s$ is low. By the choice of $P$, we can assume all such paths $P'=u_1', u_2', \ldots, u_s'$ with $u_1' = u_1$ end with a low vertex.
Suppose also that we chose $P = u_1, \ldots, u_s$ so that the smallest index $\ell_1$ of a
 neighbor of $u_s$ in $P$, $u_{\ell_1}$ is as small as possible. As the path $u_1\dots u_{\ell_1} u_s u_{s-1}\dots u_{\ell_1+1}$ is also a longest path with $u_1v_c\in E(G)$, the vertex $v_{\ell_1+1}$ is also a low vertex and has no neighbors in $V(C)\cup \{u_1,\ldots,u_{\ell_1-1}\}$; in particular, $u_{\ell_1+1}$ and $u_s$ are adjacent since they are both low. Repeating this argument, we get that all vertices $u_{\ell_1+1},u_{\ell_1+2},\ldots,u_s$ are low and have no neighbors in $V(C)\cup \{u_1,\ldots,u_{\ell_1-1}\}$. This contradicts the $2$-connectedness of $G$ and proves the claim.
\end{proof}

Let $u_1,\ldots,u_s$ be a longest path in $H$. %, then $d_C(u)$.  
 By Claim~\ref{cl: s}, $s\leq \frac{n-2}{3}$. So, 
\begin{equation}\label{sn}
\parbox{14cm}{\em if a normal vertex
$u$ is an end of a longest path $u_1,\ldots,u_s$ in $H$, then \\
$d_C(u)\geq d(u)-s+1\geq  \frac{n-1}{3}-\frac{n-2}{3}+1>1$.}
\end{equation}

Now we can prove the following claim as intended.
\begin{claim}\label{cl: complete}
Every component $H$ of $G-C$ is a complete graph.	
\end{claim}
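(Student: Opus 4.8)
The plan is to suppose some component $H$ of $G-C$ is not complete, so $h:=|H|\ge 3$ (a connected graph on at most two vertices is complete), and, since $H$ is connected, it has a path on $3$ vertices. First I would force $H$ to be Hamiltonian using Claims~\ref{app1} and~\ref{cl: s} together with the $\sigma_2$ bound, and only then dispose of the Hamiltonian-but-not-complete case.

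First I would fix, by Claim~\ref{cl: s}, a longest path $P=u_1,\dots,u_s$ of $H$ whose endpoint pair $(u_1,u_s)$ is $C$-usable; then $s\le\frac{n-2}{3}$, and by Claim~\ref{app1}\ref{A1}, $d_C(u_1)+d_C(u_s)\le\frac{2n+8}{3}-2s$. The standard rotation along $P$ (for an index $i$ with $u_i\in N_H(u_1)$ and $u_{i-1}\in N_H(u_s)$ one obtains the cycle $u_1u_iu_{i+1}\dots u_su_{i-1}\dots u_1$) shows: if $u_1u_s\in E(H)$, or $d_H(u_1)+d_H(u_s)\ge s$, then $H$ has a cycle through all of $V(P)$; when $s<h$ this cycle extends, via connectedness of $H$, to a path longer than $P$, contradicting maximality. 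Hence if $s<h$ then $u_1u_s\notin E(G)$ and $d_H(u_1)+d_H(u_s)\le s-1$, so by Claim~\ref{app1}\ref{A1},
\[
d(u_1)+d(u_s)\le (s-1)+\tfrac{2n+8}{3}-2s=\tfrac{2n+5}{3}-s\le\tfrac{2n-4}{3}<\tfrac{2n-3}{3}\le\sigma_2(G),
\]
where the middle inequality uses $s\ge 3$; this contradicts $u_1u_s\notin E(G)$. Thus $s=h$, i.e.\ $P$ is a Hamilton path of $H$. The same dichotomy rules out $H$ being non-Hamiltonian: then a cycle through $V(P)=V(H)$ would be a Hamilton cycle, so rotation forces $u_1u_s\notin E(H)$ and $d_H(u_1)+d_H(u_s)\le h-1$, and the displayed computation again gives a contradiction. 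So $H$ has a Hamilton cycle $D$, is not complete, and $h\ge 4$ (as $K_3$ is complete).

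It remains to handle $H$ Hamiltonian and not complete. The structural point I would isolate is that such $H$ either is isomorphic to $C_h$, or has a Hamilton path between two vertices non-adjacent in $H$ (a one-step rotation on $D=w_1\dots w_h$: if $w_1w_j$ is a chord, then $w_{j-1}w_{j-2}\dots w_1w_jw_{j+1}\dots w_h$ is a Hamilton path whose endpoints $w_{j-1},w_h$ are not $D$-adjacent, and one iterates with another chord if those two happen to be joined). If $H$ has a Hamilton path $Q$ from $w$ to $w'$ with $ww'\notin E(H)$ and $(w,w')$ is $C$-usable, then applying Claim~\ref{app1}\ref{A1} to $Q$ and using $d_H(w),d_H(w')\le h-2$ gives $d(w)+d(w')\le 2(h-2)+\frac{2n+8}{3}-2h=\frac{2n-4}{3}<\sigma_2(G)$, contradicting $ww'\notin E(G)$. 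Hence either $H\cong C_h$, or every Hamilton path between a non-adjacent pair has that pair failing $C$-usability; in the latter case some $H$-vertex has no neighbour on $C$ (and is then low, since its degree is at most $h-1\le\frac{n-5}{3}$), or two such endpoints share a unique $C$-neighbour, and either way the $\sigma_2$ bound pins the parameters down: a vertex of $H$ with no $C$-neighbour forces its non-neighbours to have large $C$-degree, and in $H\cong C_h$ every vertex has $H$-degree $2$, so $d_C(w)+d_C(w')\ge\sigma_2(G)-4$ for non-adjacent $w,w'$, forcing $H$ to be joined to $C$ at linearly many points of $C$ (unless $n$ is tiny, which is checked directly). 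I would then finish by observing that either some gap between consecutive attachment points on $C$ is shorter than a sub-path of $H$ joining the corresponding $H$-vertices — yielding a cycle longer than $C$ — or the only improvement available swaps a vertex of $V(C)\setminus L$ off $C$ for a low vertex of $H$, giving a cycle of length $|C|$ with more vertices of $L$; both contradict the choice of $C$ as an $L$-maximal cycle.

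The reduction to the Hamiltonian case is routine given Claims~\ref{app1} and~\ref{cl: s}. The main obstacle is the last case: once $H$ is Hamiltonian and non-complete, ruling out $H\cong C_h$ and the configurations where the Hamilton path between a non-adjacent pair is not $C$-usable requires exploiting $\sigma_2(G)\ge\frac{2n-3}{3}$ to force either a tiny value of $n$ (handled by hand) or many, well-spread attachments of $H$ to $C$, and then explicitly exhibiting either a longer cycle (by absorbing a long sub-path of $H$ through a short gap of $C$) or a same-length cycle capturing more low vertices.
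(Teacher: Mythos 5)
Your reduction to the case where $H$ is Hamiltonian is correct, and is in fact cleaner than the paper's proof of its intermediate statement \eqref{eq: long path close}: you apply the Ore rotation only to the single longest path supplied by Claim~\ref{cl: s}, whose endpoints are guaranteed to be $C$-usable, so Claim~\ref{app1}~\ref{A1} combined with $d_H(u_1)+d_H(u_s)\le s-1$ immediately contradicts $\sigma_2(G)\ge\frac{2n-3}{3}$. Up to that point the argument is sound.

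The second half has a genuine gap, in two places. First, the structural dichotomy you rely on --- that a Hamiltonian, non-complete graph other than $C_h$ always admits a Hamilton path between two non-adjacent vertices --- is false: in $K_{k,k}$ (Hamiltonian, non-complete, and not a cycle for $k\ge 3$) every Hamilton path must alternate sides and hence has adjacent endpoints, and the iterated rotation you describe never escapes this. So your case analysis does not cover all Hamiltonian non-complete components; a roughly $(h/2)$-regular, bipartite-like $H$ is exactly the configuration one must worry about, and the paper eliminates it by first proving $H$ is $r$-regular and then that $r\ge h-\frac53>h/2$, after which hamiltonian-connectedness (Theorem~\ref{thm: ham conn}) finishes. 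Second, even in the cases you do identify ($H\cong C_h$, or every non-adjacent endpoint pair of a Hamilton path failing $C$-usability), no contradiction is actually derived: phrases like ``the $\sigma_2$ bound pins the parameters down'' and ``either some gap \dots or the only improvement available \dots'' stand in for the argument rather than giving it. The non-$C$-usable situation --- one endpoint with no neighbour on $C$, or both endpoints meeting $C$ only in a single common vertex --- is precisely where the paper's proof does its real work (the chain of low vertices with no neighbours on $C$ contradicting $2$-connectedness, together with the rotations through $u_i$ and $u_{s-1}$), and your sketch contains no substitute for that analysis. As written, the proposal establishes Hamiltonicity of $H$ but not completeness.
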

\begin{proof}
Suppose that $H$ has $h$ vertices and $H\neq K_h$.
We first show that
\begin{equation}\label{eq: long path close}
	\textit{ every longest path $P=u_1,\dots, u_s$ in $H$ satisfies $u_1u_s\in E(G)$ and $s=h$.}
\end{equation}
If not, consider a longest path $P=u_1,\dots, u_s$ with $u_1u_s\notin E(G)$ with  $d(u_1)\leq d(u_s)$.
 Furthermore, among all such paths, choose $P$ so that $u_1$ has a neighbor $u_{\ell}$ in $P$ with maximum possible $\ell< s$. 

Since $d(u_1)\leq d(u_s)$ and $u_1u_s \notin E(G)$, $u_s$ is a normal vertex.
As $u_1u_s\notin E(G)$ and $P$ is a maximal path in $H$, each of $u_1,u_s$ has at most $s-2$ neighbors in $H$. Therefore 
\begin{equation}\label{nsn}
d_C(u_1)+ d_C(u_s)\geq \sigma_2(G)-2(s-2)\geq \frac{2n+9}{3}-2s,
\end{equation}
and Claim~\ref{app1} implies that $(u_1,u_s)$ is not $C$-usable. On the other hand, by~\eqref{sn}, $u_s$ has at least two neighbors in $C$.
Since $(u_1,u_s)$ is not $C$-usable, $d_C(u_1)=0$ and hence $u_1$ is a low vertex. Then $u_s$ is high.

Suppose $i\geq 2$, $u_1u_{i+1}\in E(G)$ and $u_i$ has a neighbor in $C$.
 Then the path $P_i=u_i u_{i-1}\dots u_{1}u_{i+1}\dots u_s$ has length $s$ and pair $(u_i,u_s)$ is $C$-usable. If $u_i$ has a non-neighbor in $P$, then~\eqref{nsn}  with $u_i$ in place of $u_1$ holds, which contradicts Claim~\ref{app1}. Otherwise, $d(u_i)\geq d(u_1)+1$, and instead of~\eqref{nsn} we have
 \begin{equation}\label{nsn5}
d_C(u_i)+ d_C(u_s)\geq 1+d(u_1)-(s-1)+d(u_s)-(s-2)\geq \sigma_2(G)-2s+4\geq \frac{2n+9}{3}-2s,
\end{equation}
again contradicting Claim~\ref{app1}. Thus for $2\leq i\leq \ell-1$,
\begin{equation}\label{nsn4}
\mbox{\em if   $u_1u_{i+1}\in E(G)$, then $u_i$ has no neighbor in $C$;  so it is low and $u_1u_i\in E(G)$.}
\end{equation}
By~\eqref{nsn4}, for each $1\leq i\leq \ell-1$, $u_i$ is low and has no neighbors in $C$. 

Since $u_\ell$ is not a cut vertex in $G$, there are $2\leq i\leq \ell-1$ and $\ell+1\leq j\leq s$ such that
$u_iu_j\in E(G)$. If $j<s$, then path $P_i$ would contradict the maximality of $\ell$; so $j=s$.
 Consider path $P_{s-1}=u_{s-1},u_{s-2},\ldots,u_{i+1},u_1,u_2,\ldots,u_i,u_s$. If $u_{s-1}$ is normal, then 
 by~\eqref{sn} it has a neighbor in $C$. Furthermore,
 since $u_s$ is high, $d(u_{s-1})+d(u_s)\geq\frac{2n}{3}$, and
 $d_C(u_{s-1})+d_C(u_s)\geq \frac{2n}{3}-(s-1)-(s-2)=\frac{2n+9}{3}-2s$, contradicting Claim~\ref{app1}.  Otherwise, 
 $u_{s-1}$ is low and hence $\ell=s-1$. Now, since $u_s$ is not a cut vertex in $G$, $u_{s-1}$ has a neighbor in $C$, and $(u_{s-1},u_s)$ is a $C$-usable pair. Then we get a contradiction to Claim~\ref{app1}
as in~\eqref{nsn5} with $u_{s-1}$ in place of $u_i$.
% The choice of $P$ ensures that no vertex in $\{u_1,\dots, u_{\ell-1}\}$ has a neighbor in $C$. Moreover, However, this means $G-v_{\ell}$ is disconnected, contradicting that $G$ is $2$-connected.
This proves the first part of \eqref{eq: long path close}. The second part follows from this and the connectivity of $H$.

Next we claim that $H$ is a regular graph. Indeed, take a spanning cycle $u_1,\dots, u_h,u_1$ from \eqref{eq: long path close} and take two vertices $u_i,u_{i+1}$ with different degrees. Without loss of generality, we obtain a path 
$P'=w_1,\dots, w_h= u_{i+1},u_{i+2},\dots ,u_{h},u_{1},\dots, u_i$ with $d_H(w_1)< d_H(w_h)$. 
As $d_H(w_1)< d_H(w_h)$, there exists $j$ satisfying $w_j\in N_H(w_h)\setminus \{j: w_{j+1}\in N_H(w_1)\}$. Then the new path $w_1,\dots, w_j, w_h, w_{h-1},\dots, w_{j+1}$ contradicts \eqref{eq: long path close}.

Equation \eqref{eq: long path close} implies that $H$ is complete if $h\leq 3$. Thus we may assume that is an $r$-regular graph for some $r$ and $h\geq 4$. We claim that $r> h/2$. 
To show this, take a spanning cycle $u_1,\dots, u_h,u_1$. If two consecutive vertices are normal, then we obtain a hamiltonian  path between two normal vertices $w,w'$ in $H$. Then both $w$ and $w'$ has at least $\frac{n-1}{3} - (h-1) \geq 2$ as $h=s\leq \frac{n-2}{3}$. Hence, the pair $(w,w')$ is $C$-usable and Claim~\ref{app1} yield that 
$$ 2(\frac{n-1}{3}- r)\leq  d_C(w)+d_C(w')\leq \frac{2n+8}{3} -2h,$$
implying $r\geq h-\frac{5}{3} > h/2$.
Therefore, $H$ is an $r$-regular graph with $r>h/2$, implying that $H$ is hamiltonian-connected by Theorem~\ref{thm: ham conn}. That is, there exists a hamiltonian  path between any non-adjacent vertices. This together with~\eqref{eq: long path close} implies that $H$ is a complete graph.
\end{proof}

%By this claim, every vertex outside $C$ is an end vertex of a hamiltonian path in its component of $G-C$, Claim~\ref{cl: s} implies the following.
%\begin{align}\label{obs: normal two nbrs}
%	\textit{ If $u$ is a normal vertex outside $C$, then $d_C(u)\geq \frac{n-1}{3} - (n-c-1) \geq 2$.}
%\end{align}

\subsection{$G-C$ has only one component}

In the pursuit of showing $C$ is a $2$-frame, we will now show that $G-C$ has only one component.
\begin{claim}
If $G-C$ contains a component with at least 2 vertices, then it does not contain any other component.
\end{claim}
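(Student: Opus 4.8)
The plan is to assume, for contradiction, that $G-C$ has a component $H$ with $h:=|H|\ge 2$ together with a second component $H'$ with $h':=|H'|\ge 1$. By Claim~\ref{cl: complete} both are complete, so $d(u)=d_C(u)+(h-1)$ for every $u\in H$ and $d(x)=d_C(x)+(h'-1)$ for every $x\in H'$. Since $|C|\ge \sigma_2(G)\ge\frac{2n-3}{3}$, we also have $h+h'\le n-|C|\le\frac{n+3}{3}$, so $h,h'\le\frac{n-3}{3}$. Fix, via Claim~\ref{cl: s}, a $C$-usable Hamilton path $P=u_1u_2\ldots u_h$ of $H$, chosen so that $P$ meets $L$ whenever $H\cap L\ne\emptyset$ (possible, since $P$ covers all of $V(H)$); put $A=N_C(u_1)$ and $B=N_C(u_h)$.

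The first step is a degree estimate feeding into a semi-$2$-frame reduction. Fix $x'\in V(H')$; it is nonadjacent to both $u_1$ and $u_h$, so $d(u_1)+d(x')\ge\sigma_2(G)$ and $d(u_h)+d(x')\ge\sigma_2(G)$, which after substituting $d(u_i)=d_C(u_i)+(h-1)$ give $d_C(u_1),d_C(u_h)\ge\frac{2n}{3}-h-d(x')$. If $x'$ is low, then $d(x')\le\frac{n-2}{3}$, hence $\min\{d_C(u_1),d_C(u_h)\}\ge\frac{n+2}{3}-h=\frac{n-1}{3}-(h-1)$, and Claim~\ref{app1}\ref{A3} yields $N_C(u_1)=N_C(u_h)$ and, since $P\cap L\ne\emptyset$ when $H\cap L\ne\emptyset$, that $C$ is a semi-$2$-frame --- impossible, because a semi-$2$-frame leaves only one component off $C$. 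Thus if $H$ contains a low vertex then every vertex of $H'$ is normal; and in all remaining cases we may assume the endpoints of $P$ are normal (otherwise we are in the situation just handled, after swapping the roles of $H$ and $H'$ when $h'\ge 2$).

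With $u_1,u_h$ normal we get $d_C(u_1),d_C(u_h)\ge\frac{n-1}{3}-(h-1)$, so Claim~\ref{app1}\ref{A3} again forces $N_C(u_1)=N_C(u_h)=:S$, with $\frac{n+2}{3}-h\le|S|\le\frac{n+4}{3}-h$, the upper bound coming from Claim~\ref{app1}\ref{A1}. Since the neighbourhoods on $C$ of $u_1$ and $u_h$ both equal $S$, \eqref{eq: replace} shows the vertices of $S$ lie pairwise at $C$-distance at least $h+1$, so Lemma~\ref{neighbor2}\ref{N1} gives $|C|\ge(h+1)|S|\ge(h+1)\bigl(\tfrac{n+2}{3}-h\bigr)$. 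Combined with $|C|\le n-h-h'\le n-h-1$ this severely restricts $h$, $|S|$ and $|C|$, and the symmetric statement for $H'$ when $h'\ge 2$ --- or, when $h'=1$, the bound $d_C(x')=d(x')\ge\frac{n-4}{3}$ with $N_C(x')$ independent in $C$ --- leaves only a handful of tight configurations. Each is killed by an explicit count on $C$: the set $S$ already consumes $(h+1)|S|$ positions with fixed small gaps, and the neighbourhood of $H'$ on $C$ cannot fit into the remaining room without violating $|C|\le n-h-h'$; a potential overlap $N_C(H)\cap N_C(H')\ne\emptyset$ is ruled out (or exploited) by observing that a common neighbour $v\in C$ permits re-routing $C$ into a longer cycle, or swapping a normal vertex of $C$ for a low vertex of a component, contradicting the $L$-maximality of $C$.

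The main obstacle is precisely this endgame in the all-normal case: $\sigma_2(G)$ gives no usable upper bound on the degrees of vertices off $C$, so the contradiction must be wrung out of the geometry on $C$ alone --- two components force large, widely spaced neighbourhoods onto a cycle that is simply too short to carry them --- and in the extremal configurations (notably $h=2$ with $h'\in\{1,2\}$, and small $n$) one has to track the gap structure down to the last unit, which is where discarding the overlap $N_C(H)\cap N_C(H')$ and appealing to the $L$-maximality of $C$ carry the argument.
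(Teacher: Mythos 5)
There are two genuine gaps. First, your reduction to ``both endpoints of $P$ are normal'' does not go through. In the step where $x'\in V(H')$ is low, you invoke the ``moreover'' clause of Claim~\ref{app1}\ref{A3} to conclude that $C$ is a semi-$2$-frame; but that clause requires $P\cap L\neq\emptyset$, and since all low vertices form a clique, a low vertex in $H'$ forces $H\cap L=\emptyset$, so the clause is unavailable and you get no contradiction (only $N_C(u_1)=N_C(u_h)$, which is what you derive anyway). Conversely, when an endpoint of $P$ is low you cannot ``swap roles'': the degree bound $d_C(u_1)\geq\frac{2n}{3}-h-d(x')$ only reaches the threshold $\frac{n-1}{3}-(h-1)$ needed for \ref{A3} when $d(x')\leq\frac{n-2}{3}$, i.e.\ when $x'$ is low, and that cannot happen if $H$ already contains a low vertex; moreover when $h'=1$ there is no nontrivial path in $H'$ to swap to. This is exactly the configuration (one endpoint low, $s'=1$) that the paper must treat as a separate case in its Subclaim, where it is dispatched with Lemma~\ref{neighbor2}\ref{N3} and \eqref{+2d} rather than with \ref{A3}.

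Second, the ``endgame'' in the all-normal case is asserted rather than proved, and the numbers you do write down are not sufficient. From $|C|\geq(h+1)|S|\geq(h+1)\bigl(\frac{n+2}{3}-h\bigr)$ and $|C|\leq n-h-1$ one gets, for $h=2$, only $n-4\leq n-3$ --- no contradiction --- and for $h$ close to $\frac{n}{3}$ the lower bound $(h+1)|S|$ collapses entirely, so no ``handful of tight configurations'' is actually isolated. The real content of the claim is the interaction between $N_C(H)$ and $N_C(H')$: the paper first proves (its Subclaim) that some vertex of $N_C(u_1)$ and some vertex of $N_C(w_1)$ must lie within distance $2$ on $C$, and then runs a segment count via Lemma~\ref{pathbound} with the indicator corrections $I_{P_2\cap L}$, $\mathbbm{1}_{v_{c-\gamma}\notin A}$, $\mathbbm{1}_{A\cap B=\emptyset}$. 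Notably, in the boundary case $s=2$, $B\subseteq A$ the correct conclusion is not a counting contradiction at all but the explicit construction of a spanning jellyfish; a purely quantitative ``the neighbourhoods cannot fit'' argument of the kind you sketch cannot reach that outcome, so this part of your plan would fail even if executed in full detail.
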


\begin{proof}
%Assume there are at least two components $H$ and $H'$ in $G-C$ with $|V(H)| + |V(H')|\geq 3$.
%By Claim~\ref{cl: s} we can choose hamiltonian  paths
%$P_1 = u_1,\dots, u_s $ in $H$ { and } $P_2 = w_1,\dots, w_{s'}$ in $H'$
%so that the  pairs of their end vertices are $C$-usable unless $|P_i|=1$. Since the low vertices in $G$ form a clique, at most one of $H$ and $H'$ contains low vertices.

Suppose $G-C$ contains at least two components, and at least one of these components is not a singleton. Since the low vertices in $G$ form a clique, at most one  component of $G-C$ contains low vertices.

%$P_1 = u_1,\dots, u_s $ in $H$ { and } $P_2 = w_1,\dots, w_{s'}$ in $H'$
%so that the  pairs of their end vertices are $C$-usable unless $|P_i|=1$. Since the low vertices in $G$ form a clique, at most one of $H$ and $H'$ contains low vertices.

\begin{subclaim}\label{sub1}
	There exist components $H$ and $H'$ in $G - C$ with $|V(H)| + |V(H')| \geq 3$ that satisfy both of the following. 
	
	(a) $H$ and $H'$ have hamiltonian paths $P_1 = u_1, \ldots, u_s$ and  $P_2=w_1, \ldots, w_{s'}$ respectively with  $d(u_1) \geq d(u_s)$, $d(w_1) \geq d(w_s)$ such that the endpoints of each $P_i$ are $C$-usable unless $|V(P_i)| = 1$.
	
	(b) There exist two distinct vertices, one from each of $N_C(u_1)$ and $N_C(w_1)$ such that the distance on $C$ between them is at most two.
\end{subclaim}
\begin{proof}
Suppose that the subclaim is not true. Let $H$ and $H'$ be two components in $G-C$ with $|V(H)| + |V(H')| \geq 3$. By Claim~\ref{cl: s}, we can choose paths $P_1$ and $P_2$ satisfying (a). 
Let $A=N_C(u_1)$ and $B=N_C(w_1)$. Then by our assumption, any distinct vertices $v_i\in A$ and $v_{j}\in B$ satisfy $|i-j|\geq 3$. 
We consider two cases. As the roles of $P_1$ and $P_2$ are symmetric, these two cases cover all possibilities.

{\bf Case 1.} The two ends  $u_1, u_s$ of $P_1$ are distinct normal vertices.
In this case, Claim~\ref{app1}~\ref{A3} implies that $N_C(u_s)=A$. Then Lemma~\ref{neighbor2} together with \eqref{eq: replace} implies 
\begin{align}\label{eq: (s+1)A}
c\geq  (s+1)(\frac{n-1}{3}-s+1).
\end{align}

First suppose $s\geq 3$. Equation \eqref{eq: replace} and the assumption that (b) does not hold imply that the four sets $ A^{-}, A^{+2}, B, B^{+}$ are pairwise disjoint. Then we have 
$$n-s-s' \geq |A^{-}\cup A^{+2}\cup B\cup B^{+}| \geq 2|A|+ 2|B| \geq  2( \frac{2n-3}{3} - s-s'+2),$$
yielding $s+s' \geq \frac{n}{3}+2$.
However, this is a contradiction to $s+s'\leq n-|C| \leq \frac{n}{3} +1$.

Next suppose $ s=2$. Equation~\eqref{eq: (s+1)A} yields $c\geq n-4$, thus $G - C - H$ has  at most $2$ vertices.
In this case, the set $A$ partitions $E(C)$ into segments such that each segment has three edges, possibly except one segment having four edges. Indeed, otherwise we would have $c\geq 3(\frac{n-1}{3}-1)+2=n-2$, contradicting~\eqref{eq: (s+1)A}. If $w_1$ has a neighbor in $V(C) - A$, then (b) holds. 
%we can contract at least two edges in $C$ and still apply Lemma~\ref{neighbor2} to subtract two from the left hand side of \eqref{eq: (s+1)A} and derive a contradiction. 
Hence we may assume $B \subseteq A$. If $V(G)=V(C)\cup \{u_1,u_2,w_1\}$ then we have a spanning jellyfish with the cycle $C$ and a branch vertex in $B$. So suppose there is $x\in V(G)-V(C)- \{u_1,u_2,w_1\}$. Then $|A|\leq \frac{n-4}{3}$ and hence as $u_1, u_2$ are normal vertices, $d(u_1)=d(u_2)=\frac{n-1}{3}$. Now, since $w_1$ and $x$  are not adjacent to $u_1$,
it follows from $\sigma_2(G)\geq \frac{2n-3}{3}$ that they also are normal. To have $B \subseteq A$ and the vertices $w_1, x$  normal at the same time, we
need $w_1x\in E(G)$ and $A=B=N_C(x)$. So, we again have a spanning jellyfish.

% and $G-C- H$ contains another vertex $x$, otherwise we can find a spanning jellyfish. If $N_C(x) \subseteq A$ as well, then $N_C(x) \cap B = \emptyset$ and $d_C(x) + d_C(w_1) \leq |A| \leq (n-4)/3$. Without loss of generality, we may assume $d_C(x) \leq d_C(w_1)$ (since $x$ may play the role of $w_1$ if they are both in singleton components).
%Then \[d(u_1) + d(x) \leq \frac{n-4}{3} + (s-1) + \frac{n-4}{6} + 1 = \frac{2n-8 + n-4 + 12}{6} = \frac{n}{2} < \sigma_2(G).\]

%We must therefore have $N_C(x)$ contains a vertex in $v_i \in V(C) - A$, and $v_i$ has distance at most $2$ from a vertex in $A$. Again since $x$ may play the role of $w_1$ if they belong to different components, $x = w_2$. If $B= A$, then we can find a longer cycle than $C$ containing the component $H'$. Therefore $|B| \leq |A| - 1$, and
%\[d(u_1) + d(w_1) \leq |A| + 1 + (|A| - 1) + 1 = 2|A| + 1 \leq \frac{2(n-4)}{3} + 1 < \sigma_2(G).\]
%As consecutive vertices in $A$ are distance at most four apart
%
%
%the subclaim holds if $B\setminus A\neq \emptyset$.
%Thus assume $B\subseteq A$. Then any vertex in $B$ which is adjacent to $w_{s'}$ yields a jellyfish as $s'\leq 2$.
%Because $B$ is not empty, this implies $s'=2$ and $N_C(w_1)\cap N_C(w_2)=\emptyset$, thus
%$d_C(w_1)+d_C(w_2)\leq |A|\leq \frac{n-s-s'}{3}\leq \frac{n-4}{3}$.
%However, this yields 
%$$d(u_1)+ \min\{ d(w_1), d(w_2)\}\leq |A|+(s-1) + \frac{n-4}{6} \leq \frac{n-2}{2}< \frac{2n-3}{3}=\sigma_2(G),$$ 
%a contradiction.

%If $s=3$, then \eqref{eq: (s+1)A} yields $n-s'\geq \frac{4n-13}{3}$, a contradiction as $n\geq 13$. 

 {\bf Case 2.} $u_1$ is a low vertex and $s'=1$.
 In this case, $w_{1}$ is a high vertex.
Then we have $|A|<|B|$, thus 
Lemma~\ref{neighbor2}~\ref{N3} and~\eqref{+2d} imply
$$ n-s-1\geq c \geq |A|+|B|+|B|+1 
\geq (n-1) -(s-1) + 1 = n-s + 1,$$
a contradiction.
This proves the subclaim.
\end{proof}
%
%Similarly, Subclaim~\ref{sub1} also holds for $N_C(u_s)$ and $N_C(w_{s'})$. 
%Without loss of generality
Let $H$ and $H'$ be two components in $G-C$ satisfying Subclaim 1. By symmetry, we may assume that $H$ contains no low vertices. Let $P_1$ and $P_2$ be their associated hamiltonian paths of $H$ and $H'$ respectively given by Subclaim 1. 
 Suppose also that we chose $P_2$ so that if possible, $w_{s'}$ is low (recall $d(w_1) \geq d(w_{s'})$). 
Without loss of generality, let 
$v_1\in N_C(u_1)$ and $v_{c-\gamma}\in N_C(w_1)$ for some $\gamma \in \{0,1\}$ be two vertices with distance at most $2$ in $C$. Note that if $\gamma = 1$, then $v_c$ is not in the neighborhood of any vertex in $\{u_1, u_s, w_1, w_{s'}\}$.

If $v_i\in N(u_s)$ and $v_j\in N(w_{s'})$ with $j<i\leq j+s+s'-\gamma+I_{P_2\cap L}$, then choose such $i,j$ with minimum $i-j$. The cycle $ u_1, \dots, u_s, v_i, v_{i+1},\dots, v_{c-\gamma}, w_1,\dots, w_{s'}, v_{j}, v_{j-1},\dots, v_1 $
contains $c-\gamma +s+s' - (i-j-1)\geq c+1-I_{P_2\cap L}$ vertices.
If $I_{P_2\cap L}=0$, then it is a contradiction as $C$ is a longest cycle. If $I_{P_2\cap L}=1$, then it yields a contradiction to the $L$-maximality of $C$ as no vertices in $\{v_{j+1},\dots, v_{i-1}\}$ are low otherwise they would be adjacent to $w_{s'}$ (recall $w_{s'}$ was chosen to be low, if possible). Hence we have $i-j \geq s+s'+1-\gamma + I_{P_2\cap L}$.

{\bf Case 1.} All $v_i\in N(u_s)$ and $v_j\in N(w_{s'})$ satisfy $i\leq j$.
In this case, $N(u_s)$ and $N(w_{s'})$ share at most one vertex and each set does not contain any consecutive vertices.
Recall that either $s=1$ or $(u_1,u_s)$ is $C$-usable. By~\eqref{eq: replace}, $N_C(u_s)-\{v_1\}$ is nonempty and contained in $\{v_{s+2}, \ldots, v_{c-\gamma}\}$. By the case, $w_{s'}$ then has no neighbors in $\{v_1,\dots, v_{s+1}\}$.
Similarly, either $s'=1$ or $(w_1, w_{s'})$ is $C$-usable. We obtain $N_C(w_{s'})-\{v_{c-\gamma}\} \subseteq \{v_{1},\dots,v_{c-s'-\gamma-1}\}$. Therefore $u_s$ has no neighbors in $\{v_{c-s'-\gamma},\dots, v_{c-\gamma}\}$.

We apply Lemma~\ref{pathbound}(a) to $v_{s+2}, \ldots, v_{c-s'-\gamma-1}$ with sets $N_C(u_s)$ and $N_C(w_{s'})$ to obtain 

%Using $|N_C(u_s)\cap N_C(w_{s'})|\leq 1$, we count the number of vertices in $\{v_{s+2},\dots, v_{c+s'-\gamma-1}\}$, 
$$ c- \gamma - s-s'-2
\geq 2(\sigma_2(G)-(s-1)-(s'-1) - 1) -1 \geq 2(\sigma(G) - s - s')+1.$$
As $n-s-s'\geq c$, we have 
$$n-2s-2s'-\gamma-2 \geq \frac{4n-6}{3} - 2s - 2s' + 1,$$
a contradiction. 

{\bf Case 2.} There exists $i'>j'$ with $v_{i'}\in N(u_s)$ and $v_{j'}\in N(w_{s'})$. We apply Lemma~\ref{pathbound}(b) with $Q = v_1, \ldots, v_{c-\gamma}$, $A = N_C(u_s), B = N_C(w_{s'})$, and $t = s + s' - \gamma + I_{P_2 \cap L} \geq 2$. 

%If $H'$ contains low vertices, then all vertices in $H$ are high. Therefore $2 d(u_s) + d(w_{s'}) \geq \sigma_2(G) + d(u_s) \geq (2n-3)/3 + n/3 \geq (3n-3)/3$. Otherwise if all vertices in $H$ and $H'$ are normal, $2d(u_s) + d(w_{s'}) \geq 3(n-1)/3$. 

 Lemma~\ref{pathbound} together with~\eqref{+2d} implies 
\[n-s-s' - \gamma \geq c - \gamma \geq (n-1) - 2(s-1) -(s'-1) + s + s' - \gamma + I_{P_2 \cap L} - 4 + \mathbbm{1}_{w_{c-\gamma} \notin A} + \mathbbm{1}_{A \cap B = \emptyset},\]
which yields $n - s' \geq n-2 + I_{P_2 \cap L}+ \mathbbm{1}_{v_{c-\gamma} \notin A} + \mathbbm{1}_{A \cap B = \emptyset}$. So we must have
\begin{equation}\label{eq: Isets}I_{P_2 \cap L}+ \mathbbm{1}_{v_{c-\gamma} \notin A} + \mathbbm{1}_{A \cap B = \emptyset} \leq 1 \text{ with equality only if } s' =1 \text{ and } G - C = H \cup H'.
\end{equation}

We first consider the case that $I_{P_2 \cap L}+ \mathbbm{1}_{v_{c-\gamma} \notin A} + \mathbbm{1}_{A \cap B = \emptyset} =0$. Then both $H$ and $H'$ contain only normal vertices and $A \cap B \neq \emptyset$. By Claim~\ref{app1}~\ref{A3} applied to both $H$ and $H'$, $N_C(u) = A$ and $N_C(u') = B$ for all $u \in V(H), u' \in V(H')$. If $H$ and $H'$ are the only components of $G-C$, then $G$ contains a spanning jellyfish. Otherwise $n-s-s' -1 \geq c$, and  Lemma~\ref{pathbound} implies $n-s' - 1 \geq  n-2 + 0 + 0 + 0$. So $s' = 1$. But since $H$ and $H'$ both contain only normal vertices, we may reverse the roles of $A$ and $B$ and obtain also $s=1$, contradicting $s+s' \geq 3$. 

Now we consider the case $I_{P_2 \cap L}+ \mathbbm{1}_{v_{c-\gamma} \notin A} + \mathbbm{1}_{A \cap B = \emptyset} =1$. Then  $s' = 1$ and therefore $s \geq 2$. By~\eqref{eq: replace}, $v_{c-\gamma} \notin A$. We must have $A \cap B\neq \emptyset$, otherwise we violate~\eqref{eq: Isets}. Since we assume $H$ contains only normal vertices, Claim~\ref{app1}~\ref{A3} again implies $N_C(u) = A$ for all $u \in V(H)$. We obtain a spanning jellyfish using any vertex in $A \cap B$. 
\end{proof}

\subsection{$C$ is a $2$-frame}
Now we know that $G-C$ contains only one component $H$ which is a complete graph. 
In this subsection, we prove that $C$ must be a $2$-frame.

If $H$ contains no low vertices, then taking hamiltonian  paths between every pair of vertices in $H$ and applying Claim~\ref{app1}~\ref{A3} to them implies that all vertices in $H$ have the same neighborhood in $C$. This yields a spanning jellyfish.
Thus we assume that $H$ contains a low vertex.

If $H$ contains at least two normal vertices, then take a hamiltonian  path between them and apply Claim~\ref{app1}~\ref{A3}. Using Claim~\ref{cl: complete}, we conclude that $C$ is a $2$-frame. So we assume that $H$ has at most one normal vertex.
As $G-C$ is a complete graph $H$, the following claim is useful to analyze the structure of $G$.
%between $C$ and $H$.

\begin{claim}\label{cl: indep edge}
If the component $H$ with $|H|\geq 2$ in $G-C$ contains a low vertex and $G$ has two disjoint edges $v_iu$, $v_{i'}u'$ between $C$ and $H$, then $N(v_{i+1})\cap \{v_{i'+1},\dots, v_{i'+s+1}\}=\emptyset$.
\end{claim}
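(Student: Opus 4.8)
The plan is to argue by contradiction: suppose there is a vertex $v_{i'+k} \in N(v_{i+1})$ with $1 \le k \le s+1$, and from this build either a cycle longer than $C$ or a cycle of the same length containing strictly more low vertices, contradicting the $L$-maximality of $C$. The basic move is a ``detour'' construction. Since $H$ is a complete graph containing a low vertex, and $u, u'$ are two (possibly equal --- but here distinct, as $v_iu$ and $v_{i'}u'$ are disjoint edges) vertices of $H$, we have a hamiltonian path of $H$ from $u$ to $u'$ whenever $|H| \ge 2$; more relevantly, for the shortest useful case we just need the edge $uu'$ or a short $u$--$u'$ path inside $H$. The idea: start from the cycle $C$, remove the arc of $C$ strictly between $v_{i+1}$ and (roughly) $v_{i'+k}$, and reroute through $v_{i+1}v_{i'+k}$ on one side and through $v_i, u, \dots, u', v_{i'}$ on the other side, so that the vertices $u$ and $u'$ (and, if we route through a hamiltonian path of $H$, all of $H$) get absorbed into the cycle while only a bounded-length arc of $C$ is deleted. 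Counting lengths, the new closed walk should be a cycle on at least $c$ vertices, and since it contains the low vertex or vertices of $H$ that $C$ does not, it beats $C$ under \ref{L2} (or under \ref{L1} if it is genuinely longer).

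Concretely, I would set up the rerouting as follows. Assume without loss of generality $i=0$, so $v_iu = v_cu$ with $u_0 := v_c$ adjacent to $u$, and index $C$ as $v_1, \dots, v_c$. The edge $v_1v_{i'+k}$ (the hypothetical edge $v_{i+1}v_{i'+k}$) lets us replace the arc $v_1 v_2 \cdots v_{i'+k}$ of $C$ by the single edge $v_1 v_{i'+k}$, at a cost of deleting $v_2, \dots, v_{i'+k-1}$; but we then reinsert a path $v_c, u, (\text{path in } H), u', v_{i'}$ between the ``broken ends,'' provided the combinatorics of where $v_{i'}$ sits relative to the deleted arc works out. The constraint $k \le s+1$ is exactly what guarantees that the arc we delete is short enough that the $\ge 2$ extra vertices of $H$ (at least $u$ and $u'$, or all $s' \ge 2$ of a path in $H$ through the low vertex) more than compensate --- here $s$ should denote $|H|$ or the length of the longest path in $H$ as used in the surrounding argument, so the bound $|N(v_{i+1})\cap\{v_{i'+1},\dots,v_{i'+s+1}\}|=\emptyset$ is the precise threshold where the trade is favorable. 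I would do the index bookkeeping carefully by cases according to whether $v_{i'+k}$ lies on the arc from $v_{i+1}$ toward $v_i$ or the other way, and according to the relative position of $v_{i'}$, using \eqref{eq: replace} to control which configurations are already excluded.

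The main obstacle I anticipate is the index/orientation bookkeeping: one must verify in every subcase that the rerouted closed walk is actually a \emph{cycle} (no repeated vertices) and that it picks up a net gain either in length or in number of low vertices. In particular, one has to handle the possibility that $v_{i'}$ or $v_{i'+1}$ already lies inside the arc one wants to delete, and the possibility that $v_{i+1} = v_{i'}$ or that the two edges $v_iu$, $v_{i'}u'$ interleave awkwardly around $C$. I expect the cleanest way to organize this is to reduce to the ``worst'' case $k = s+1$ with the $u$--$u'$ detour passing through a longest path of $H$ (which contains the low vertex, by the standing assumption), invoke \eqref{eq: replace} and Claim~\ref{cl: s} to bound $s$, and then a single length inequality of the form ``(new cycle length) $\ge c - (s+1-1) + (\text{number of } H\text{-vertices inserted}) + 1 > c$ or ties $c$ with more low vertices'' closes the contradiction. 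Once the contradiction is reached for all placements of $v_{i'+k}$ with $1 \le k \le s+1$, the claim follows.
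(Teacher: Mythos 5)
Your core construction is the right one and is essentially the paper's: assuming $v_{i'+\alpha}\in N(v_{i+1})$ for some $1\le\alpha\le s+1$, take a hamiltonian path $P'$ of $H$ from $u$ to $u'$ and form the cycle
$v_{i+1},\, v_{i'+\alpha},\, v_{i'+\alpha+1},\dots, v_i,\, u,\, P',\, u',\, v_{i'},\, v_{i'-1},\dots, v_{i+1}$,
which trades the $\alpha-1\le s$ vertices $v_{i'+1},\dots,v_{i'+\alpha-1}$ for the $s$ vertices of $H$ and hence has length at least $c$. Your worry about degenerate placements is also resolvable: by \eqref{eq: replace} applied to $P'$ (which contains a low vertex), $v_i$ and $v_{i'}$ are at distance at least $s+2$ on $C$, so $v_{i+1}$ cannot lie in the deleted arc nor coincide with $v_{i'+\alpha}$.

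The genuine gap is in how you close the contradiction. You assert the new cycle ``beats $C$ under \ref{L2}'' because it contains the low vertices of $H$ that $C$ misses. This only works when the deleted arc $v_{i'+1},\dots,v_{i'+\alpha-1}$ contains no low vertices; if it does contain one, the new cycle may have length exactly $c$ (when $\alpha=s+1$) and no more low vertices than $C$, so neither \ref{L1} nor \ref{L2} is violated and no contradiction results. The paper spends half its proof on precisely this case, and the fix requires an idea absent from your proposal: since the low vertices of $G$ form a clique, a low vertex $v_{i'+\beta}$ with $\beta<\alpha$ in the deleted arc is adjacent to the low vertex $u^*$ of $H$, so one reroutes using the edges $v_iu$ and $v_{i'+\beta}u^*$ (or, when $u^*=u$, replaces the segment $v_{i'+1}\dots v_{i'+\beta-1}$ by a hamiltonian $u^*$--$u'$ path of $H$); the deleted arc then has at most $\alpha-1-\beta\le s-1$ vertices and the resulting cycle is strictly longer than $C$, contradicting \ref{L1}. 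Without this case analysis the argument is incomplete.
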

\begin{proof}
Let $u^*$ be a low vertex in $H$.
Assume $v_{i'+\alpha}\in N(v_{i+1})$ for some $1\leq \alpha \leq s+1$.
If $\{v_{i'+1},\dots, v_{i'+s+1}\}$ contains no low vertices, then the cycle
$$ v_{i+1}, v_{i'+\alpha}, v_{i'+\alpha+1}, \dots, v_{i}, u, P', u', v_{i'}, v_{i'-1},\dots, v_{i+1}$$
yields a cycle contradicting the $L$-maximality of $C$, where $P'$ is a hamiltonian  path in $H$ from $u$ to $u'$. If $v_{i'+\beta}$ is a low vertex with $1\leq \beta<\alpha$, then it is adjacent to $u^*$.
If $u^*\neq u$, then repeating the above argument with the edges $v_i u, v_{i'+\beta} u^*$ yields a cycle longer than $C$, a contradiction.
If $u^*= u$, then replacing the segment $v_{i'+1}\dots v_{i'+\beta-1}$ with a hamiltonian  path in $H$ from $u^*$ to $u'$ yields a cycle  longer than $C$, a contradiction.
\end{proof}

We introduce some definitions. 
\begin{definition}
	A vertex in $N_C(H)$ is a {\bf  private neighbor} of a vertex $u \in V(H)$ if its only neighbor in $H$ is $u$. The vertices in $N_C(H)$ partition $E(C)$ into segments. A {\bf simple segment} of $C$ is a segment $S= v_i, v_{i+1}, \ldots, v_j$ of $C$ such that $\{v_i, v_j\} \subseteq N(H)$ and no interior vertices of $S$ belong to $N(H)$. We say $S$ is a {\bf private} segment if there exists some $u \in V(H)$ such that $v_i$ and $v_j$ are private neighbors of $u$. Otherwise we say $S$ is a {\bf shared} segment. 
%	Given a cyclic orientation of $C$, for some $v_i \in N_C(H)$, we denote by $S_{i}$ the simple segment of $C$ that starts with $v_i$.
\end{definition}

 Every  segment has at least one interior vertex. Moreover, as $H$ is complete, \eqref{eq: replace} implies that every shared segment has at least $s+1$ interior vertices.

\begin{claim}
	If $H$ contains at most one normal vertex  and $s=|H|\geq 2$, then $C$ is a $2$-frame.
\end{claim}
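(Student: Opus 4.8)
The plan is to show that, under the stated hypotheses, the unique component $H$ of $G-C$ satisfies $|N_C(H)|=2$ and $|V(H)|=\lceil\tfrac n3-2\rceil$; together with the already-established facts that $H=K_s$ and $H$ contains a low vertex, this makes $C$ a $2$-frame. For the preliminaries: since $H$ is complete, every hamiltonian path of $H$ is a longest path, and Claim~\ref{cl: s} supplies one, $P$, whose ends form a $C$-usable pair; as $P$ spans $H$ and $H\cap L\ne\emptyset$ we have $I_{P\cap L}=1$, so Claim~\ref{app1}\ref{A2} gives $s\le\tfrac{n-4}{3}$, hence $|V(H)|\le\lceil\tfrac n3-2\rceil$ already, and only equality plus $|N_C(H)|=2$ remain. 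Two consequences of ``$G$ has no spanning jellyfish'' will be used repeatedly: $(\star)$ no vertex of $C$ is adjacent to all of $V(H)$, since $C$ with such a vertex as centre would be a spanning jellyfish; and $C$ cannot be rerouted through a hamiltonian path of $H$ joining the $H$-neighbours of two consecutive cycle vertices, nor can a vertex of $H$ be inserted between two of its consecutive $C$-neighbours, as this would yield a cycle longer than $C$. In particular no two members of $N_C(H)$ are consecutive on $C$; calling two members of $N_C(H)$ \emph{close} if they lie at $C$-distance at most $s+1$, \eqref{eq: replace} shows that two non-close members are at distance at least $s+2$ and that all members of one closeness class (a \emph{cluster}) are private neighbours of a common vertex of $H$.

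Next I would prove $|N_C(H)|=2$. Consider the simple segments of $C$ determined by $N_C(H)$, say $p$ private and $q$ shared. They are not all private: the common endpoint of two consecutive private segments is a private neighbour of both centres, so around a run of private segments all centres coincide, which in turn would make a lone shared segment private --- hence $q\ge 2$; also $2$-connectedness forces at least two clusters (a single cluster would make its centre in $V(H)$, or its unique cycle-vertex, a cut vertex), so $|N_C(H)|\ge 2$. By \eqref{eq: replace}, using $s\le\tfrac{n-4}{3}$ to rule out the wrap-around alternative in the distance bound, every shared segment has at least $s+1$ interior vertices, so counting the vertices of $C$ segment by segment gives $c\ge 2|N_C(H)|+qs\ge 2|N_C(H)|+2s$; with $c=n-s$ this forces $s\le\tfrac{n-6}{3}$ once $|N_C(H)|\ge 3$. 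To turn this into a contradiction I would use degrees. Fixing a low $u^\ast\in V(H)$, since $u^\ast$ is adjacent to all of $H-u^\ast$ we get $d_C(u^\ast)=d(u^\ast)-(s-1)\le\tfrac{n+1}{3}-s$, so at least $c-d_C(u^\ast)\ge\tfrac{2n-1}{3}$ vertices of $C$ are non-adjacent to $u^\ast$, hence high, of degree at least $\lceil n/3\rceil$. On the other hand, the vertex $v_{b+1}$ following any attachment vertex $v_b$ is not itself an attachment vertex, so all of its neighbours lie on $C$; yet Claim~\ref{cl: indep edge} forbids $v_{b+1}$ from the length-$(s+1)$ arc of $C$ following any attachment vertex whose recorded $H$-neighbour differs from that of $v_b$, and when $|N_C(H)|\ge 3$ these forbidden arcs (spread at distance $\ge s+2$ by the cluster bound) leave too little room for a high $v_{b+1}$ to place its $\ge\lceil n/3\rceil$ neighbours on $C$. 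This contradiction yields $|N_C(H)|=2$.

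Finally, with $N_C(H)=\{v_a,v_b\}$ every $u\in V(H)$ has $d_C(u)=|N_C(u)\cap\{v_a,v_b\}|\le 2$, so $d(u)\le s+1$. If $H$ has a normal vertex $u$ then $\tfrac{n-1}{3}\le d(u)\le s+1$, whence $s\ge\tfrac{n-4}{3}$ and, with $s\le\tfrac{n-4}{3}$, $|V(H)|=s=\tfrac{n-4}{3}=\lceil\tfrac n3-2\rceil$. If $H$ has no normal vertex, then $2$-connectedness produces disjoint edges $v_az$, $v_bz'$ with $z\ne z'$, and routing a hamiltonian path of $H$ from $z$ to $z'$ between $v_a$ and $v_b$, closed along the longer of the two $\{v_a,v_b\}$-arcs, gives a cycle through all of $V(H)$ whose complement is a single sub-path of $C$; if $|V(H)|$ were strictly smaller than $\lceil\tfrac n3-2\rceil$ --- i.e.\ $c=n-s$ too large --- I would extract either a cycle longer than $C$ (by a more economical rerouting) or a spanning jellyfish (using a high vertex of the long arc as star-centre on a suitable cycle), contradicting the standing hypotheses; hence $|V(H)|=\lceil\tfrac n3-2\rceil$ and $C$ is a $2$-frame.

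The step I expect to be the main obstacle is excluding $|N_C(H)|\ge 3$: the metric count alone gives only $s\le\tfrac{n-6}{3}$, which is still compatible with the size requirement of a $2$-frame when $3\mid n$, so one must genuinely exploit the abundance of high vertices on $C$ together with Claim~\ref{cl: indep edge}, and the bookkeeping of the forbidden arcs (organised by the number and type of clusters) is the heaviest part; pinning down $|V(H)|=\lceil\tfrac n3-2\rceil$ in the all-low case is a second, similar pressure point, where a spanning jellyfish has to be squeezed out.
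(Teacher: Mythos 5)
You have the right target --- show $|N_C(H)|=2$ and $s=\lceil n/3-2\rceil$ --- and your preliminaries (the cluster structure of $N_C(H)$ via \eqref{eq: replace}, the existence of at least two shared segments and two clusters, $s\le\frac{n-4}{3}$ from Claim~\ref{app1}\ref{A2}) are correct. But both load-bearing steps are missing. To exclude $|N_C(H)|\ge3$ you count, for the successor $v_{b+1}$ of an attachment vertex, the arcs forbidden by Claim~\ref{cl: indep edge}; this only yields an upper bound of the shape $d(v_{b+1})\le c-(m-1)(s+1)-O(1)$, i.e.\ $m(s+1)\lesssim\frac{2n}{3}$ with $m$ the number of clusters. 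Since at this point you have no lower bound on $s$ beyond $s\ge2$, this is perfectly consistent with $|N_C(H)|\ge3$ (e.g.\ small $s$ with few clusters), so no contradiction follows; you acknowledge this is the ``main obstacle'' but do not resolve it. The paper's proof gets the missing lower bound by injecting the degree of a vertex $u_1\in H$ into the count: it fixes one shared segment $v_1\dots v_k$, proves the five shifted sets $X_1^-,X_2^+,Y_1^{+2},(Y_2-X_2)^+,W$ pairwise disjoint (Subclaim~\ref{subcl1}), and uses $d(v_2)+d(u_1)\ge\sigma_2(G)$ together with $d_C(u_1)\ge d(u_1)-s$, so that three degrees totalling roughly $n$ are packed into $|C|=n-s$; handling $|N_C(H)|\ge3$ then still requires the extra disjoint blocks $Z_1,Z_2$ and the private-neighbour statement \eqref{eq: assumption} before it collapses. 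None of this is replaced by your forbidden-arc count.

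The second gap is the all-low case once $N_C(H)=\{v_1,v_k\}$. Your normal-vertex subcase is fine ($d_C(u)\le2$ forces $s\ge\frac{n-4}{3}$), but when every vertex of $H$ is low, $H$ itself gives no lower bound on $s$, and ``extract either a longer cycle by a more economical rerouting or a spanning jellyfish'' is a hope, not an argument: nothing in your setup produces either object when $s$ is small. The paper again obtains $s\ge\lceil n/3-2\rceil$ from the degrees of the high vertices $v_2,v_{k+1}$ that follow the attachment vertices: $d(u_1)\le s$ forces $d(v_{k+1})\ge\sigma_2(G)-s$, and the disjointness count gives $n-s\ge\frac{4n-6}{3}-2s$. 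Some such degree-sum computation is unavoidable here; a purely metric rerouting argument will not supply it.
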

\begin{proof}
As $G$ is $2$-connected, there exists a simple shared segment $S$, say $S=v_1,\dots, v_k$, and distinct $u,u' \in V(H)$ such that $v_1u, v_ku'\in E(G)$. Let us suppose we chose $S$ and $u, u' \in V(H)$  such that if possible, $uv_k \notin E(G)$.  Let $P = u_1, \ldots, u_s$ be a spanning path of $H$ with $u_1 = u$ and $u_s = u'$. 
By the maximality of $C$, $v_2, v_{k+1}$ have no neighbors in $H$ and therefore are high vertices.
Define
\begin{align*}
X_1 &= \{v_i: 3\leq i\leq k, v_i\in N(v_2)\}, &X_2 &= \{v_i: i>k, v_i\in N(v_2)\}, &  \\
Y_1&=N_C(u_1)-\{v_1,v_k\}, & Y_2&= N_C(u_s)-\{v_1,v_k\}  \enspace  \text{ and } 
& W =N(v_{k+1})\cup \{v_{k+1}\}.
\end{align*}
\begin{subclaim}\label{subcl1}
The sets $X_1^-, X_2^+, Y_1^{+2}, (Y_2-X_2)^+$ and $W$ are pairwise disjoint.
Moreover, $\bigcup_{t=1}^{s+1} X_2^{+t}\cap W=\emptyset$.
\end{subclaim}
\begin{proof}
By construction, $X_1^-\cap X_2^+=\emptyset$. Also $X_1^-\cap (Y_1^{+2}\cup Y_2^{+})=\emptyset$ as $S$ is a simple segment.
If $v_i\in X_1^{-}\cap W$, then the cycle 
$v_1,u_1,\dots, u_s, v_k, v_{k-1},\dots, v_{i+1}, v_2 ,v_3,\dots, v_{i}, v_{k+1}, v_{k+2},\dots, v_c, v_1$ is longer than $C$, a contradiction. Thus $X_1^{-}$ is disjoint from all other four sets.

If $v_i\in X_{2}^{+}\cap Y_1^{+2}$, then the cycle $v_{i-1}, v_{i}, \dots, v_c, v_1, u_1, v_{i-2}, v_{i-3},\dots, v_{2}, v_{i-1}$ is a cycle containing one more vertex than $C$, a contradiction.
%Similarly if $v_i\in X_{2}^+\cap Y_2^{+2}$, then applying Claim~\ref{cl: indep edge} to edges $v_1u_1, v_{i-2}u_{s}$ and $v_2 u_{i-1}$ yields a contradiction.
 Clearly $X_2^+\cap (Y_2-X_2)^{+}=\emptyset$. 
If $v_i\in X_2^{+t}\cap W$ for some $1 \leq t\leq s+1$, then $v_1, v_c, v_{c-1},\dots, v_{i}, v_{k+1}, v_{k+2},\dots, v_{i-t}, v_2, v_3,\dots, v_{k}, u_s, u_{s-1},\dots, u_1, v_1$ yields a better cycle than $C$ unless $t = s+1$, and $\{v_{i-t+1}, \ldots, v_{i-1}\}$ contains at least as many low vertices as $P$ does.
 In this case, let $1 \leq \beta\leq s$ be the largest index such that $v_{i-(s+1)+\beta}$ is low. Since $H$ contains at most one normal vertex, $u_1$ or $u_s$ is low, and moreover $\beta \in \{s, s-1\}$. If $u_1$ is low, then the cycle $v_{k+1}, v_{k+2}, \ldots, v_{i-(s+1) + \beta}, u_1, \ldots, u_s, v_k, v_{k-1}, \ldots, v_i, v_{k+1}$ is longer than $C$. If $u_s$ is low, we take the cycle $v_{k+1}, v_{k+2}, \ldots, v_{i-(s+1) + \beta}, u_s, v_k, v_{k-1}, \ldots, v_i, v_{k+1}$ which is longer than $C'$ unless $\beta = s-1$. By the choice of $\beta$,  $v_{i+\beta + 1} =  v_{i+s}$ is normal. Therefore this cycle has the same length as $C$ but one more low vertex $(u_s)$.
 This confirms the ``moreover" part of the statement. Thus $X_{2}^+$ is disjoint from the last three sets.

If $v_i\in Y_1^{+2}\cap Y_2^{+}$, then \eqref{eq: replace} yields a contradiction.
If $v_i\in Y_1^{+2}\cap W$, then applying Claim~\ref{cl: indep edge} with three edges $v_k u_s, v_{i-2} u_1$ and $v_{k+1} v_{i}$ yields a contradiction. Finally, if $v_i\in Y_2^+\cap W$, then 
$v_{k+1}, v_{k+2},\dots, v_{i-1}, u_s, v_{k}, v_{k-1},\dots, v_1, v_c, \dots, v_{i}, v_{k+1}$ yields a cycle longer than $C$, a contradiction. This proves the subclaim.
\end{proof}
%From this subclaim, we can make the following observation.
Now we prove that
\begin{align}\label{eq: nonneighbor obs}
\textit{every vertex in $H$ has a neighbor in $C$.}	
\end{align}
Indeed, as $H$ is a clique, if \eqref{eq: nonneighbor obs} is not true, then there is a vertex  $w$ with degree exactly $s-1 \leq \frac{n-7}{3}$ by Claim~\ref{cl: s}. Then each of $v_2$ and $v_{k+1}$ has degree at least $\sigma_2(G)-\frac{n-7}{3}\geq \frac{n+4}{3}$.
As $u_1v_1\in E(G)$, $d(u_1)\geq s$, thus $d(u_1)+ d(v_2)\geq d(w)+ d(v_2)+1 \geq \frac{2n-3}{3}+1$.
By Subclaim~\ref{subcl1}, we have
\begin{align*}
\begin{split}
n-s &\geq |X_1|+|X_2|+|Y_1|+|Y_2\setminus X_2| + |W| \geq (d(v_2)-1) + (d(u_1)-2-(s-1)) + 0 + d(v_{k+1})+ 1 \\
& \geq  \frac{2n-3}{3}+1-(s-1) + \frac{n+4}{3}-2 \geq n - s +\frac{1}{3}.
\end{split}
\end{align*}
 This contradiction proves \eqref{eq: nonneighbor obs}.

On the other hand, as $G$ does not contain a spanning jellyfish, every vertex in $C$ has a non-neighbor in $H$. This together with \eqref{eq: nonneighbor obs} implies that by the choice of $(S, u, u')$, $v_k \notin N(u_1)$. Then $|Y_1| = d(u_1)-(s-1)-1$, and we instead obtain
\begin{align}\label{eq: sum size 2}
n-s \geq |X_1|+|X_2|+|Y_1|+|Y_2 - X_2| + |W| \geq d(v_2)+ d(u_1)+ d(v_{k+1})+ |Y_2-X_2| -s.
\end{align}

Our goal is to find another vertex set disjoint from $X_1^-,X_2^+, Y_1^{+2}, Y_2^{+}$ and $W$ to improve \eqref{eq: sum size 2} in several situations.

By the $2$-connectedness of $G$, $|N_C(H)|\geq 2$. 
First assume that $N_C(H)=\{v_1,v_k\}$.
So $d(u_1) \leq 1 + (s-1) = s$ and therefore $d(v_{k+1}) \geq \sigma_2(G)-s$. Equation
\eqref{eq: sum size 2} yields
$n-s \geq \sigma_2(G) + (\sigma_2(G)-s) +0-s \geq \frac{4n-6}{3} -2s$, implying $s\geq \lceil \frac{n}{3}-2\rceil$. 
Using Claim~\ref{cl: complete}, $C$ is a $2$-frame.

Hence assume that $|N_C(H)|\geq 3$. Then there are at least three simple segments, so choose $S'=v_{i}\dots v_{i+t}$ with $i\notin \{1,k\}$.
We now verify the following.
\begin{align}\label{eq: assumption}
\begin{split}
&\hbox{The vertex $v_i$ is a private neighbor of $u_1$ or $u_s$.}
\end{split}
\end{align}
Indeed, if not, then let $u_{j'}\neq u_1$ and $u_{j''}\neq u_s$ be neighbors of $v_i$ (possibly $u_{j'} = u_{j''}$).
Then we claim that $Z_1=\{v_{i+3},\dots v_{i+s+1}\}$ does not intersect with $X_1^{-}, X_2^+, Y_1^{+2}, (Y_2-X_2)^+$ or $W$.
It is trivial that it is disjoint from $X_1^{-}$.
If it intersects $X_2^+$, then Claim~\ref{cl: indep edge} with two edges $u_1v_1, u_{j'} v_i$ yields a contradiction.

If it intersects $Y_1^+$ or $(Y_2-X_2)^+$, 
applying \eqref{eq: replace} to a sub-segment of $v_{i}\dots v_{i+s}$ with a hamiltonian  path in $P$ either between $u_1$ and $u_{j'}$ or between $u_s$ and $u_{j''}$ yields a contradiction.
If it intersects $W$, then Claim~\ref{cl: indep edge} with two edges $u_sv_k, u_{j''} v_i$ yields a contradiction.
With this additional segment disjoint from those sets, we can add $|Z_1|=s-1$ to the right hand side of \eqref{eq: sum size 2}, thus
$$n-s \geq d(v_2)+ d(u_1) + d(v_{k+1})-s + (s-1) \geq \frac{2n-3}{3} + \frac{n}{3} -1 \geq n-2.$$
This yields $s=2$ and the assumption that $v_i$ is not a private neighbor of $u_1$ or $u_s$ implies that $v_i$ is adjacent to both $u_1$ and $u_s$, yielding a jellyfish, a contradiction. Thus this confirms \eqref{eq: assumption}.

Applying \eqref{eq: assumption} to all segments, we obtain %the following.  
 \begin{equation}\label{int}\hbox{$N_C(w) \subseteq\{v_1,v_k\}$ for all $w \in V(H) - \{u_1, u_s\}$, and $N_C(u_1) \cap N_C(u_s) \subseteq \{v_1\}$.} \end{equation}

Furthermore, we claim that $X_2\cap Y_2=\emptyset$. Otherwise, there exists $v_j \in N(v_2)\cap N(u_s)\setminus\{v_1,v_k\}$.
Consider $Z_2=\{v_{j+2},\dots, v_{j+s+1}\}$. By definition, it is disjoint from $X_1^{-}$.  As $v_j\in X_2$, Subclaim~\ref{subcl1} implies that $Z_2$ is disjoint from $W$.
As $v_j\in Y_2$, if $Z_2\cap Y_1^{+2}\neq \emptyset$, then applying \eqref{eq: replace} to a simple shared segment of length at most $s-1$ yields a contradiction. 
If $Z_2\cap X_2^{+}\neq \emptyset$, then Claim~\ref{cl: indep edge} with the two edges $v_1u_1, v_j u_s$ yields a contradiction.
Thus we have
\begin{align*}
n-s &\geq |X_1^{-}|+|X_2^+|+|Y_1^{+2}| + |W|+ |Z_2| \\
&\geq 
(d(v_2)-1)+(d(u_1)-s)+d(v_{k+1})+1 + s
\geq \frac{2n-3}{3}+ \frac{n}{3} \geq n-1,	
\end{align*}
so $s=1$, a contradiction. We conclude that $Y_2 = Y_2 -X_2$.
% so $s=2$. 
% However if $s=2$, then consider $v_{j+3}$ in addition. It does not belong to $X_1^{-}$ by definition, and does not belong to $X_2^{+}$, as otherwise Claim~\ref{cl: indep edge} with three edges $u_1v_1, u_sv_j, v_2 v_{j+2}$ again yields a contradiction. If it belongs to $Y_1^{+2}$, then applying \eqref{eq: replace} to the segment $v_{j}v_{j+1}$ with a hamiltonian  path in $H$ yields a contradiction. If it belongs to $W$, then the cycle $C'= v_{k+1}v_{j+3} v_{j+4}\dots v_c v_1 u_1 \dots u_s v_{k} v_{k-1}\dots v_2 v_{j} v_{j-1}\dots v_{k+1}$ yields a cycle with the same length with $C$. If $\{v_{j+1},v_{j+2}\}$ does not contain a low vertex, then this contains more low vertices than $C$ as $H$ contains a low vertex. 
% If one of $v_{j+1},v_{j+2}$ is a low vertex, then it is adjacent to a low vertex in $H$. Whether $u_1$ or $u_2$ is low, we can alway apply \eqref{eq: replace} to obtain a contradiction. Thus the left hand side of the above display inequality can be replaced with $|C\setminus\{v_{j+3}\}| = n-s-1=n-3$, thus we obtain $n-3\geq n-2$, a contradiction. 
%From this contradiction, $X_2\cap Y_2=\emptyset$ as desired and we
%conclude $Y_2=Y_2\setminus X_2$.

With this, \eqref{eq: sum size 2} becomes 
$n-s \geq n-1+|Y_2|-s$, so   $|Y_2|\leq 1$.

Furthermore, we claim that either $v_1\notin N(u_s)$ or $N_C(u_1)=\{v_1\}$ holds.
Indeed, suppose $u_1$ has a neighbor in $C$ other than $v_1$. Then there must be another shared segment $S' = v_a,\dots,v_b$ such that $v_a\in N(u_s)$ and $v_b\in N(u_1) - N(u_s)$ with $v_1\notin \{v_a,v_b\}$.
Equation \eqref{int} similarly applies to $S'$ with the roles of $u_1$ and $u_s$ reversed. Therefore $N_C(u_1)\cap N(u_s) \subseteq \{v_a\}\neq \{v_1\}$, so $v_1\notin N(u_s)$ as desired. 

So we now have two remaining cases.
If $N_C(u_1)=\{v_1\}$, as we assumed $|N_C(H)|\geq 3$, $|Y_2|\geq 1$ must hold.
Then $d(v_{k+1})\geq \sigma_2(G) - d(u_1) \geq \frac{2n-3}{3}-s$, and
\eqref{eq: sum size 2} becomes
$$n-s \geq \frac{2n-3}{3} + \frac{2n-3}{3} -s + |Y_2|-s \geq \frac{4n}{3} -2s-1,$$
implying $s\geq \frac{n}{3}-1$, a contradiction to Claim~\ref{app1}~\ref{A2}.

 If $v_1\notin N(u_s)$, then as 
 $d(v_2), d(v_{k+1}) \geq \frac{2n-3}{3} -\delta(G) \geq \frac{2n-3}{3} - d(u_s) \geq \frac{2n-3}{3}-(1+|Y_2|+(s-1))$, 
  \eqref{eq: sum size 2} becomes
 $$n-s \geq  \left\lceil \frac{2n-3}{3} + \frac{2n-3}{3} - |Y_2| -s  + |Y_2| -s \right \rceil,$$
 yielding $s\geq \lceil \frac{n-6}{3}\rceil$ 
with equality only when $d(u_1) = d(u_s) = \delta(G)$ and $\delta(G)=s+|Y_2|$.
As a strict inequality yields $s\geq \frac{n-3}{3}$, we must have equality. If $|Y_2| = 0$, then $d_C(u_1) = d_C(u_s) = 1$ and therefore $N_C(u_1) = \{v_1\}$, a contradiction. So suppose $|Y_2| \geq 1$. 
Because $s\geq \frac{n}{3}-2>2$ as $n\geq 13$, there exists $w\in V(H)\setminus\{u_1,u_s\}$ with $d(w)\geq \delta(G) = s+1$.
This together with \eqref{int} implies that $N_C(w)=\{v_1,v_k\}$. However, we can consider a new hamiltonian  path $P'$ from $u_1$ to $w$. We apply~\eqref{int} to the segment $S$ with $P'$, letting $w$ play the role of $u_s$. Then $N(u_s)\subseteq \{v_1,v_k\}$, a contradiction that $|Y_2|\geq 1$. This contradiction proves the claim.
\end{proof}

\subsection{Proof of Lemma~\ref{mainlem1}}
By the final claim in the above section, now we know that if $G-C$ contains an edge, then $C$ is a $2$-frame.
We finish the proof of Lemma~\ref{mainlem1} by showing every $2$-frame has a spanning jellyfish.

\begin{proof}[Proof of Lemma~\ref{mainlem1}]

Assume that $G-C$ contains at least one edge.
By previous claims, $C$ is a $2$-frame.
By Claim~\ref{cl: s} $H=G-C$ has a hamiltonian  path $P=u_1,\dots, u_s$  such that $(u_1,u_s)$ is $C$-usable.
Let $N_C(H)=\{v_1, v_k\}$. Let $S_1=v_1, \ldots, v_k, S_2=v_k, v_{k+1}, \ldots, v_1$ be two shared segments, and let $I_1, I_2$ the sets of their interior vertices, respectively. Then \eqref{eq: replace} implies that  each $S_i$ contains at least $s+1$ interior vertices. By the definition of $2$-frame, we have $(n-6)/3\leq s\leq (n-4)/3$.
 Thus assuming $|I_2|\geq |I_1|$, by~\eqref{eq: replace} we have 
$$(|I_1|,|I_2|)= \left\{\begin{array}{ll}
(s+1,s+1) & \text{ if } s=(n-4)/3, \\
(s+1,s+2) & \text{ if } s=(n-5)/3, \\
(s+1,s+3) \text{ or } (s+2,s+2) & \text{ if } s=(n-6)/3.
 \end{array}\right.
 $$
 As $C$ is a $2$-frame, $H$ contains a low vertex and all vertices in $V(C)-\{v_1,v_k\}$ are high vertices. Moreover, in each of $I_1$, $I_2$, and $P$ there exists a vertex with a non-neighbor in $\{v_1,v_k\}$, otherwise we could find a spanning jellyfish. In particular, $\delta(G) \leq s$, and each high vertex has degree at least $\lceil \sigma_2(G) - s \rceil$ which is at least $s+2, s+3, s+3$ when $s = (n-4)/3, (n-5)/3, (n-6)/3$, respectively. 
We claim that
\begin{align}\label{eq: int c}
\textit{$N_{I_1}(v_c)\subseteq \{v_2\}$. If $|I_1|=s+1$, then $N_{I_1}(v_c)=\emptyset$}	
\end{align}
Indeed, if $v_c$ is adjacent to some $v_i\in I_1$, then 
$v_c,v_i, v_{i-1},\dots, v_1, u_1, u_2,\dots, u_s, v_k, v_{k+1,}\dots, v_c$
has length $|C|-|I_1|+(i-1)+s\geq |C|$.
If $i\neq 2$ or $|I_1|=s+1$, then this yields either a longer cycle or cycle with more low vertices than $C$, a contradiction to the $L$-maximality of $C$, confirming \eqref{eq: int c}.

Let $v_i \in I_1$ be a vertex with a non-neighbor in $\{v_1, v_k\}$. Then $d_{S_1}(v_i) \leq |S_1| -1 -1 \leq |I_1|$ which is at most $d(v_i) -2$ if $(|I_1|, |I_2|) = (s+1, s+3)$, and at most $d(v_i) - 1$ otherwise. So there exists $v_j \in N_{I_2}(v_i)$. 

Suppose $|I_1| = s+1$. If $v_cv_{j-1} \in E(G)$, then $C' =v_1, \ldots, v_i, v_{i+1}, \ldots, v_c, v_{i-1}, \ldots, v_k, u_s, \ldots, u_1, v_1$ contradicts the $L$-maximality of $C$. Thus $d(v_c) = d_{S_2}(v_c) \leq |V(S_2) - \{v_c\}| - 1 = |I_2|$ which is less than $d(v_c)$ unless $|I_2| = s+3$. But in this case, $v_i$ must have another neighbor $v_{j'} \in I_2$ for which $v_{j'-1} \notin N(v_c)$. Instead we get $d(v_c) \leq |I_2|-1 < d(v_c)$. 

Finally we consider the case $(|I_1|, |I_2|) = (s+2, s+2)$. Symmetrically to~\eqref{eq: int c}, $N_{I_1}(v_{k+1}) \subseteq \{v_{k-1}\}$, and since $|I_1| = |I_2|$, $N_{I_2}(v_2) \subseteq \{v_c\}$. If both $v_cv_2$ and $v_{k-1}v_{k+1}$ are edges, then $v_1, v_2, v_c, v_{c-1}, \ldots, v_{k+1}, v_{k-1}, v_k, u_s, \ldots, u_1, v_1$ contradicts the $L$-maximality of $C$. Without loss of generality, suppose $v_cv_2 \notin E(G)$. Then $d(v_c) \leq d_{S_2}(v_c) \leq |V(S_2)|  -1 = s+3$ which implies $N(v_c) = V(S_2) - \{v_c\}$. Similarly,
% for $v_2$ and $S_1$. In particular, 
 $v_{j-1} \in N(v_c)$ and $v_i \neq v_2$. The cycle $C'$ from above again contradicts the $L$-maximality of $C$. 
\end{proof}

\section{Proof of Theorem~\ref{main}}\label{secb}
In this section, assume that $G$ is a $2$-connected graph with $\sigma_2(G) \geq (2n-3)/3$ containing no spanning jellyfish. By Lemma~\ref{mainlem1}, $G-C$ contains only isolated vertices for every $L$-maximum cycle $C$ of $G$. 
We will need a new version of the Hopping Lemma. This lemma was proved by Woodall~\cite{W} to attack problems on hamiltonian cycles. Jackson~\cite{BJ} refined it to prove that every 2-connected, $k$-regular graph on at most $3k$ vertices is hamiltonian. Another modification was proved by van den Heuvel~\cite{H}
and used in~\cite{CFHJL} to prove Theorem~\ref{Chen2}.

\subsection{Modified Hopping Lemma}

In this subsection, we develop a version of the Hopping Lemma in a more general form than we need in this paper, because we think that this form may be of interest by itself.
  In the next section we will apply it to our setting to prove Theorem~\ref{main}.

Assume $C$ is a longest cycle in a $2$-connected graph $G$ such that $G-C$ contains only singleton components. % (for instance, any $L$-maximal cycle satisfies this by the Lemma~\ref{mainlem1}). 
Then for all $u \in V(G-C)$, we have $N(u) = N_C(u)$, and $u$ does not have consecutive neighbors in $C$. 
Let $C = v_1, v_2, \ldots, v_c, v_1$.

\bigskip
 We iteratively define the sets $Y_0 (=Y_0(C)), Y_1, Y_2, \ldots$ and $X_1, X_2, \ldots$ as follows.

\begin{enumerate}[label={\rm (X\arabic*)}]
\item \label{XY1} $Y_0 = V(G-C)$,
\item\label{XY2} For $i \geq 1,$ $X_i = N_C(Y_{i-1})$, and
\item\label{XY3} For $i \geq 1$, $Y_i =\{v_i \in V(C): v_{i-1}, v_{i+1} \in X_i\}\cup Y_{i-1}$.  
\end{enumerate}
Then $Y_0 \subseteq Y_1 \subseteq Y_2 \subseteq \ldots$ and $X_1 \subseteq X_2 \subseteq \ldots$. As $n$ is finite, we may define $X (= X(C)) = \lim_{i \to \infty} X_i$ and $Y (=Y(C)) = \lim_{i\to \infty} Y_i$. For $W\in \{X,Y\}$, if $u\in W$, then
 the {\bf $W$-height} of $u$ is defined as $h_W(u) = \min_{i \in \mathbb N} \{i: u \in X_{i}\}$.
If $u \notin W$, then we set $h_W(u) = \infty$.

The {\bf height} of an $x,x'$-path $P$ is defined to be $h(P) =\max\{h_X(x),h_X(x')\}$.

\begin{definition}
A path $P=w_1\dots w_c$ is a {\bf $C$-hopping path} if the following hold.
\begin{enumerate}[label={\rm (H\arabic*)}]
\item \label{HP1} $w_1, w_c \in X$, 
\item \label{HP2}$P$ does not contain any consecutive vertices in $X_1$, 
\item \label{HP3}$V(P) =V(C)$, and
\item \label{HP4}if $j < h(P)$ and $w_s \in Y_j - \{w_1,w_c\}$, then $w_{s-1}, w_{s+1} \in X_j$. 
\end{enumerate}
\end{definition}
As $C$ is clear from the context, we often omit $C$ and just write a hopping path.

\medskip

\begin{claim}\label{hopreduction}
If $G$ contains a $C$-hopping path and no vertices in $X_1$ are consecutive in $C$, then it contains a $C$-hopping path of height 1. 
\end{claim}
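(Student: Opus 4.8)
\textbf{Proof plan for Claim~\ref{hopreduction}.}

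The plan is to take a $C$-hopping path $P = w_1 \dots w_c$ of minimum height $h = h(P)$ and show that if $h \geq 2$ we can produce a hopping path of strictly smaller height, contradicting minimality. By definition of height, one of the endpoints, say $w_1$, satisfies $h_X(w_1) = h$. So $w_1 \in X_h \setminus X_{h-1}$, which means $w_1$ has a neighbor $u \in Y_{h-1}$ (by \ref{XY2}), and by \ref{XY3} the height-$(h-1)$ structure tells us that $u$ is either in $Y_0 = V(G-C)$ or is a vertex $v$ of $C$ whose two cyclic neighbors lie in $X_{h-1}$. The key move is a ``rotation'': we want to re-route $P$ so that the new path uses the edge $w_1 u$ and has an endpoint of height at most $h-1$.

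First I would handle the structural setup: locate $u$ as a neighbor of $w_1$ with $h_Y(u) \leq h-1$ (such $u$ exists precisely because $w_1$ entered $X$ at stage $h$), and locate $u$ as an interior vertex of the path $P$ — write $u = w_s$ for some $1 < s < c$ (it cannot be $w_c$ unless we are already done, and if $u \notin V(C)$ we use \ref{HP3} which says $V(P) = V(C) \supseteq$ everything, so $u$ is on $P$; if $u \in V(G-C)$ this needs a small separate argument since then $u \notin V(C) = V(P)$ — but in that case $u$ being a singleton component outside $C$ with $w_1 \in N_C(u)$ lets us directly shorten/extend, contradicting that $P$ spans $C$, so in fact $u \in V(C)$). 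Then the new path candidate is $P' = w_{s-1}, w_{s-2}, \dots, w_1, w_s, w_{s+1}, \dots, w_c$, obtained by flipping the initial segment and attaching via edge $w_1 w_s$. Its endpoints are $w_{s-1}$ and $w_c$. I then need: (i) $w_{s-1} \in X$ and in fact $h_X(w_{s-1}) \leq h-1$, which should follow from property \ref{HP4} applied to $w_s = u \in Y_{h-1} - \{w_1, w_c\}$ at level $j = h-1 < h = h(P)$, giving $w_{s-1}, w_{s+1} \in X_{h-1}$; (ii) $P'$ still spans $C$ — immediate since it uses the same vertex set; (iii) $P'$ has no two consecutive $X_1$-vertices — here I use the hypothesis that no two vertices of $X_1$ are consecutive \emph{on $C$}, hence $w_1$ has no $X_1$-neighbor among $w_{s-1}, w_{s+1}$ issues... actually I must check the only new adjacency is $w_1 w_s$, and rule out both being in $X_1$; and (iv) $P'$ satisfies \ref{HP4} with respect to its own height $h(P') \leq h - 1$, which follows because the monotonicity $X_j \subseteq X_{j+1}$ and the fact that $P$ satisfied \ref{HP4} for all $j < h$ means in particular for all $j < h(P')$, and the segment-reversal does not affect whether a vertex's two path-neighbors lie in $X_j$ (reversal preserves the unordered pair of neighbors) except at the single splice point, which we check by hand.

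The main obstacle I anticipate is verifying condition \ref{HP4} for $P'$ cleanly, together with the ``no consecutive $X_1$ vertices on $P'$'' condition \ref{HP2}: the rotation changes the path-neighbor of exactly one or two vertices (namely $w_1$, which gains $w_s$ as a neighbor and loses $w_0$... there is no $w_0$, so $w_1$ was an endpoint and simply gains the neighbor $w_s$; and $w_s = u$ which now has path-neighbors $w_1$ and $w_{s+1}$ instead of $w_{s-1}$ and $w_{s+1}$), so the bookkeeping is local but must be done carefully against the layered definition of the $Y_j$'s. A secondary subtlety is the degenerate case analysis: $s = 2$ (then reversing the segment $w_1$ alone is trivial), $u$ adjacent to $w_c$ as well (may finish immediately), and the possibility $u \in V(G-C)$ which I dispose of using \ref{HP3}. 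Once these are cleared, minimality of $h$ forces $h = 1$, proving the claim.
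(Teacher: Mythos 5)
There is a genuine gap: your single-rotation scheme does not produce a contradiction when \emph{both} endpoints of the minimal-height hopping path have height $h$. You rotate at the endpoint $w_1$ with $h_X(w_1)=h$ and obtain a path $P'$ with endpoints $w_{s-1}$ and $w_c$, where $h_X(w_{s-1})\le h-1$ by \ref{HP4}. But $h(P')=\max\{h_X(w_{s-1}),h_X(w_c)\}$, and if $h_X(w_c)=h$ as well, then $h(P')=h$ and minimality of $h$ is not violated. You cannot simply "repeat at the other end," because after the first rotation $w_c$'s relevant neighbor in $Y_{h-1}$ may now sit on the wrong side of the splice, and all of \ref{HP2}--\ref{HP4} would have to be re-verified for a path that is no longer $C$ read in order. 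The paper resolves exactly this: it adds a \emph{secondary} minimization (of $h_X(x)+h_X(x')$ subject to minimal height), and in the equal-heights case performs a simultaneous double rotation $w_{j-1}\dots w_1,\,w_j\dots w_{j'},\,w_c\dots w_{j'+1}$ that replaces both endpoints at once. Your proposal contains neither ingredient.

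A second, related gap is your treatment of the degenerate case $u=w_c$, which you dismiss with "unless we are already done." If the $Y_{h-1}$-neighbor $u$ of $w_1$ equals the other endpoint $w_c$, then \ref{HP4} does not apply to it (it is excluded as an endpoint), so you cannot conclude $w_{c-1}\in X_{h-1}$, and your rotation collapses. The paper excludes this via the invariant it calls \eqref{hopeq} ($h_Y(x)\ge i$ and no $w_s$ lies in $X_j\cap Y_j$ for $j<i$), which is itself proved by a cyclic rotation of $C$ and relies on the secondary minimization. This invariant is also used later to verify \ref{HP4} for the rotated path, so it is not optional bookkeeping. (Minor point: your handling of $u\in V(G-C)$ is more convoluted than needed; since $h\ge 2$, the neighbor witnessing $w_1\in X_h\setminus X_{h-1}$ lies in $Y_{h-1}\setminus Y_{h-2}\subseteq V(C)$ directly by \ref{XY3}.) The local checks of \ref{HP2} and the spanning condition that you outline are fine and match the paper's, but without the equal-heights case and the \eqref{hopeq}-type invariant the argument does not close.
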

\begin{proof}
Among all  hopping paths, choose $P = w_1, \ldots, w_c$ with $x = w_1, x' = w_c$ such that $h(P)$ is minimized, and subject to this $h_X(x) +h_X(x')$ is minimized. Without loss of generality, suppose $1 \leq h_X(x) \leq h_X(x')$ and let $i= h_X(x')>1$.

If $x\in Y$ and $h_Y(x) =j < i$, say $x = v_s$ in $C$, then by the definition of $Y_j$, $v_{s-1} \in X_j$, and by the assumption that $C$ does not contain two consecutive vertices in $X_1$, 
 the path $P' = v_s, v_{s+1}, \ldots, v_c, v_1, \ldots, v_{s-1}$ is a hopping path such that either $h(P') < h(P)$ or $h(P') = h(P)$ and $h_X(v_s) + h_X(v_{s-1}) \leq h_X(v_s) + j < h_X(x) + h_X(x')$, contradicting the choice of $P$. Similarly if some $v_s \in V(C)$ belongs to $Y_j \cap X_j$ with $j < i$, then the path $P'$ above is a hopping path of height at most $j < h(P)$. Therefore
\begin{equation}\label{hopeq}
\hbox{$h_Y(x) \geq i$ ~ and ~for all $j < i$ and $s \in [c]$, we have $w_s \notin X_j \cap Y_j$.}
\end{equation}

{\bf Case 1}: $h_X(x) < h_X(x')$.
Say $x' \in X_i- X_{i-1}$. By definition, $x'$ has a neighbor $w_j \in Y_{i-1} - Y_{i-2}$, and $w_j \neq x$ by~\eqref{hopeq}. Then $w_{j+1} \in X_{i-1}$ by (H4). Consider the path $P' = w_{j+1} w_{j+2} \dots w_c w_j w_{j-1} \ldots w_1$ which has height $h(P') = i-1$. \ref{HP1} and \ref{HP3} hold for $P'$. To check \ref{HP2}, it suffices to check it for the vertices $x'$ and $w_j$ since the $P'$-neighbors for all other vertices are the same as their $P$-neighbors. By assumption $h_X(x')> 1$, so $x' \notin X_1$. Therefore \ref{HP2} holds. Similarly, for \ref{HP4} it suffices to check $x'$ and $w_j$. In fact, since $h_Y(w_j) = i-1 = h(P')$, we need only check $x'$. If $x' \in Y_q$ and $q < i-1$, then $w_j \in X_{q+1} \cap Y_{i-1} \subseteq X_{i-1} \cap Y_{i-1}$, contradicting~\eqref{hopeq}.

{\bf Case 2}: $h_X(x) = h_X(x')=i >1$. There exists $w_j \in Y_{i-1}$ and $w_{j'} \in Y_{i-1}$ such that $h_Y(w_j) = h_Y(w_{j'}) = i-1$,  $xw_j \in E(G)$, and $x'w_{j'} \in E(G)$. By~\eqref{hopeq}, $w_j \neq x'$ and $w_{j'}\neq x$. 
 If $j \leq j'$ then set $P' = w_{j-1} w_{j-2} \ldots w_1, w_j \ldots w_{j'}, w_c w_{c-1} \dots w_{j'+1}$. Then $h(P') \leq \max\{h_X(w_{j-1}), h_X(w_{j'+1})\} \leq i-1$, and \ref{HP1}--\ref{HP3} hold. To verify \ref{HP4}, it suffices to check for $x, w_j, x', w_{j'}$. Symmetrically, we will just check $x$ and $w_j$. Indeed, $h_Y(w_j) \geq h(P')$ and by~\eqref{hopeq}, $h_Y(x) \geq i = h(P) > h(P')$, so \ref{HP4} holds. 
If $j > j'$, we instead take $P'' = w_{j'+1} \dots w_j w_1 w_1 \dots w_{j'} w_c w_{c-1} \ldots w_{j+1}$. A similar argument shows that $P''$ is a hopping path with $h(P'') \leq i-1$. This proves the claim. 
\end{proof}

We say a path $P =x_1, x_2, \ldots, x_p$ is a {\bf good path} if $N(x_1), N(x_p) \subseteq V(P)$ and each of $N(x_1)$ and $N(x_p)$ does not contain consecutive vertices in $P$.

\begin{claim}\label{cl: no hopping}
Suppose a longest good path in $G$ contains at most $c+1$ vertices. Then $C$ does not contain consecutive vertices in $X_1$, and $G$ has no $C$-hopping path,
%
%
%If $G$ does not contain a good path with $c+2$ vertices, then \textcolor{red}{no two consecutive vertices in $C$ both belong to $X_1$ and }
%$G$ contains no $C$-hopping path.
\end{claim}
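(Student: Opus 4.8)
The plan is to prove the contrapositive in spirit: assuming a longest good path has at most $c+1$ vertices, I will show first that $X_1$ contains no two consecutive vertices of $C$, and then that the existence of a $C$-hopping path would yield a good path on more than $c+1$ vertices, contradicting the hypothesis.

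\textbf{Step 1: No consecutive vertices in $X_1$.} Suppose $v_j, v_{j+1} \in X_1 = N_C(Y_0) = N_C(V(G-C))$. Since $G-C$ has only singleton components, there are (not necessarily distinct) vertices $u, u' \in V(G-C)$ with $u v_j, u' v_{j+1} \in E(G)$. If $u \neq u'$, then $u v_j, v_j v_{j+1}, v_{j+1} u'$ extends the cycle $C$ by re-routing: delete the edge $v_j v_{j+1}$ from $C$ and splice in the detour through $u$ and $u'$, producing a cycle longer than $C$ (using that $u, u'$ are nonadjacent to $V(C)$ issues do not arise because $u,u'\notin V(C)$), contradicting maximality of $C$. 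If $u = u'$, then $u$ has consecutive neighbors $v_j, v_{j+1}$ on $C$; but a singleton component $u$ cannot have consecutive neighbors on a longest cycle (otherwise inserting $u$ between $v_j$ and $v_{j+1}$ lengthens $C$). Either way we contradict the maximality of $C$. Hence no two vertices of $X_1$ are consecutive on $C$.

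\textbf{Step 2: No $C$-hopping path.} Suppose for contradiction that $G$ has a $C$-hopping path. By Step 1 and Claim~\ref{hopreduction}, $G$ has a $C$-hopping path $P = w_1, \ldots, w_c$ of height $1$, so $w_1, w_c \in X_1$. By definition of $X_1$, there exist $u, u' \in V(G-C) = Y_0$ (possibly equal) with $u w_1, u' w_c \in E(G)$. If $u \neq u'$, then $u, w_1, w_2, \ldots, w_c, u'$ is a path on $c+2$ vertices; I must check it is a good path, i.e.\ $N(u)$ and $N(u')$ lie in the path and contain no consecutive vertices of the path. Since $u, u' \in V(G-C)$ and $V(P) = V(C)$, we have $N(u) = N_C(u) \subseteq V(C) \subseteq V(u,w_1,\ldots,w_c,u')$, and likewise for $u'$; and $u, u'$ have no consecutive neighbors in $C$ since they are singletons on a longest cycle. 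But this good path has $c+2 > c+1$ vertices, a contradiction. If $u = u'$, a little more care is needed: $u$ is adjacent to both $w_1$ and $w_c$; then $w_1, w_2, \ldots, w_c, u$ is a path on $c+1$ vertices whose endpoint $u$ has $N(u) = N_C(u) \subseteq V(C) = \{w_1,\ldots,w_c\}$ with no two consecutive, but I also need the other endpoint $w_1$ to have all neighbors in the path and no two consecutive; this is where I will want to use the $2$-connectedness of $G$ to find a third attachment point: since $u$ is not a cut vertex, there must be some other vertex of $G-C$ attached at a different place, or $w_1$ itself has a neighbor outside $\{w_1,\ldots,w_c\}$ which can only be $u$ again — and then $u w_1$, $u w_c$ plus $w_1 \cdots w_c$ forms a cycle on $c+1$ vertices, contradicting that $C$ is longest. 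So the $u=u'$ case actually directly contradicts the maximality of $C$.

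\textbf{Main obstacle.} The routine part is verifying the good-path conditions (all neighbors inside, no consecutive neighbors), which follows immediately from $V(P) = V(C)$, from $N(u) = N_C(u)$ for $u \in V(G-C)$, and from the ``no consecutive neighbors on a longest cycle'' observation recorded just before the claim. The genuinely delicate point is the degenerate case where the two attachment vertices $u, u'$ coincide (or more generally where the hopping path's endpoints attach back to the same singleton), since then one only gets a path/cycle on $c+1$ vertices rather than $c+2$; I expect to resolve this by arguing that it forces a cycle on $c+1$ vertices through $u$, $w_1$, $w_c$, contradicting that $C$ is a longest cycle — this is the step where maximality of $C$ (rather than the good-path hypothesis) does the work, and it is worth stating carefully.
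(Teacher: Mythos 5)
Your overall strategy is the same as the paper's---reduce to a height-$1$ hopping path via Claim~\ref{hopreduction} and then manufacture either a $(c+1)$-cycle or a good path on $c+2$ vertices---but two of your individual steps are wrong as written. In Step 1, for the case $u\neq u'$ you claim that deleting the edge $v_jv_{j+1}$ and ``splicing in the detour through $u$ and $u'$'' produces a cycle longer than $C$. It does not: in this section $G-C$ consists of singleton components, so $u$ and $u'$ are nonadjacent and there is no $u,u'$-path avoiding $C$, hence no cycle is formed. The correct contradiction here is not with the maximality of $C$ but with the good-path hypothesis: $u, v_j, v_{j-1},\ldots, v_{j+1}, u'$ (traversing $C$ the long way around) is a good path on $c+2$ vertices, since $N(u)=N_C(u)$ and $N(u')=N_C(u')$ lie in $V(C)$ and contain no two consecutive vertices of $C$, and consecutiveness on this path coincides with consecutiveness on $C$.

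In Step 2, for the case $u\neq u'$ you verify the ``no consecutive neighbours'' condition for the path $u,w_1,\ldots,w_c,u'$ by saying that $u$ and $u'$ have no consecutive neighbours \emph{in $C$}. That is the wrong ordering: the good-path condition must be checked with respect to the hopping path $w_1,\ldots,w_c$, whose vertex order is in general different from that of $C$. The property you actually need is \ref{HP2}: since $N(u),N(u')\subseteq X_1$ and a hopping path contains no two consecutive vertices of $X_1$, neither $N(u)$ nor $N(u')$ contains two consecutive vertices of the path. This is precisely why \ref{HP2} is built into the definition of a hopping path, and omitting it leaves a real gap. Your handling of the degenerate case $u=u'$ (a $(c+1)$-cycle through $u$, contradicting the maximality of $C$) is correct and matches the paper, though the detour through $2$-connectedness before you reach that conclusion is unnecessary.
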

\begin{proof}
 Assume that $G$ does not contain a good path with $c+2$ vertices. If $C$ contains two consecutive vertices $v_{c}, v_{c+1}$ in $X_1$, then there exists $y,y'$ in $Y_0$ that are neighbors of $v_{c}, v_{c+1}$, respectively. If $y=y'$, then $G$ contains a cycle of length $c+1$, a contradiction. If $y\neq y$, then $P=y v_{c} v_{c-1},\dots, v_{c+1} y'$ is a good path with $c+2$ vertices, as each of $y,y'$ does not contain consecutive neighbors in $C$ and therefore in $P$. Hence, we conclude that $C$ does not contain any consecutive vertices in $X_1$.
 
If $G$ has a hopping path, then by Claim~\ref{hopreduction} and the above conclusion we may assume $G$ contains a hopping path $P=w_1\dots w_c$ of height $1$. Let $x, x'$ be the endpoints of $P$.  By the definition of $X_1$, there exists $y, y'$ in $Y_0 = V(G-C)$ that are neighbors of $x$ and $x'$ respectively. If $y = y'$, then $G$ contains a cycle of length $c+1$, contradicting the choice of $C$. If $y \neq y'$, then by (H2), $y$ and $y'$ do not have any consecutive neighbors in $P$. Thus $y,P,y'$ is a good path with $c+2$ vertices.
\end{proof}

\begin{lemma}[Modified Hopping Lemma]\label{hopping}
Suppose $G$ is a graph with no good path with more than $c+1$ vertices.
Then the following hold where $X=X(C)$ and $Y=Y(C)$ as defined above. 
\begin{enumerate}[label={\rm (M\arabic*)}]
\item \label{MH1} $X$ does not contain consecutive vertices in $C$,
\item\label{MH2} $X\cap Y=\emptyset$ and $N(Y) \subseteq X$, and
\item\label{MH3} $Y$ is an independent set.
\end{enumerate}
\end{lemma}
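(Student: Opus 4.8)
The plan is to establish the three properties \ref{MH1}--\ref{MH3} essentially by combining Claim~\ref{cl: no hopping} with a careful induction on the $X$- and $Y$-heights, reusing the exchange/hopping arguments that appear in the proof of Claim~\ref{hopreduction}. First I would invoke Claim~\ref{cl: no hopping}: since $G$ has no good path with more than $c+1$ vertices, $C$ contains no consecutive vertices in $X_1$ and $G$ has no $C$-hopping path. The absence of consecutive vertices in $X_1$ is the base case of \ref{MH1}; to upgrade it to all of $X$, I would argue by contradiction. Suppose $v_j, v_{j+1} \in X$ with, say, $h_X(v_j) + h_X(v_{j+1})$ minimized among such consecutive pairs, and not both equal to $1$. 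Then one of them, say $v_{j+1}$, has height $i \geq 2$, so it has a neighbor $u \in Y_{i-1} \setminus Y_{i-2}$; tracing along $C$ and swapping at $u$ exactly as in Case~1 of Claim~\ref{hopreduction}, one produces either a longer cycle (contradicting maximality of $C$) or a new cycle $C'$ of the same length whose corresponding $X_1$-set, or a lower-height consecutive pair, violates the minimality — this requires checking that the rotation does not create consecutive $Y_0$-neighbors, which is where the ``no consecutive vertices of $X_1$'' hypothesis gets used repeatedly.

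For \ref{MH2}, I would prove $X \cap Y = \emptyset$ first. Suppose some $v_s \in X_j \cap Y_j$ with $j$ minimal. Since $v_s \in Y_j$, either $v_s \in Y_0 = V(G-C)$ — but $V(G-C)$ is disjoint from $V(C) \supseteq X$, contradiction — or $j \geq 1$ and $v_{s-1}, v_{s+1} \in X_j$. In the latter case $v_s \in X_j$ means $v_s$ has a neighbor in $Y_{j-1}$; now the rotation $P' = v_s, v_{s+1}, \ldots, v_c, v_1, \ldots, v_{s-1}$ (which uses $v_{s-1} \in X_j$, $v_{s+1} \in X_j$ and the no-consecutive-$X_1$ property to make it a legitimate hopping path of height $< j$, or actually of height depending on $h_X(v_{s-1})$) is a $C$-hopping path, contradicting Claim~\ref{cl: no hopping}. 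One has to be slightly careful: the cleanest route is to show directly that $X \cap Y \ne \emptyset$ would yield a $C$-hopping path (take the rotation of $C$ starting at such a $v_s$), and then $N(Y) \subseteq X$ is immediate from the definition $X_{i} = N_C(Y_{i-1})$ once one checks $N(Y) = N_C(Y)$, which follows because vertices of $Y_0 = V(G-C)$ have all neighbors on $C$ (singleton components) and vertices of $Y \cap V(C)$ have all their neighbors on $C$ trivially.

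Finally, for \ref{MH3}, suppose $y, y' \in Y$ are adjacent; pick such a pair minimizing $\max\{h_Y(y), h_Y(y')\}$. If both lie in $Y_0 = V(G-C)$ they are non-adjacent since $G-C$ has only singleton components, so at least one, say $y' = v_s$, is a cycle vertex with $h_Y(v_s) = i \geq 1$, giving $v_{s-1}, v_{s+1} \in X_i$. Then the rotation of $C$ at $v_s$ together with the edge $yv_s$ can be spliced into a cycle longer than $C$ (if $y \in Y_0$) or into a lower-height hopping path (if $y$ is also a cycle vertex), contradicting either maximality of $C$ or Claim~\ref{cl: no hopping}; again the no-consecutive-$X_1$ property and \ref{MH1}, \ref{MH2} are used to make these rotations valid. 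The main obstacle I anticipate is bookkeeping: making each rotation argument genuinely produce a \emph{hopping} path (verifying \ref{HP1}--\ref{HP4}, especially \ref{HP4} at the two newly-created endpoints) rather than just an ordinary spanning path, and correctly tracking which height parameter decreases so that the minimal-counterexample choices are contradicted. This is exactly the kind of verification already done in Claim~\ref{hopreduction}, so I would lean heavily on that claim and its proof technique rather than redoing it from scratch.
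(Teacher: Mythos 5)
Your overall skeleton (invoke Claim~\ref{cl: no hopping}, then derive \ref{MH1}--\ref{MH3}) matches the paper, but the execution of \ref{MH1} has a genuine gap, and it is the load-bearing step. The paper's argument is one line: if $v_j,v_{j+1}\in X$, then the cyclic rotation $v_{j+1},v_{j+2},\dots,v_c,v_1,\dots,v_j$ is \emph{itself} a $C$-hopping path --- \ref{HP1} holds since both endpoints are in $X$, \ref{HP2} holds because its consecutive pairs are exactly the consecutive pairs of $C$ other than $\{v_j,v_{j+1}\}$ and Claim~\ref{cl: no hopping} forbids consecutive $X_1$-vertices on $C$, and \ref{HP4} holds automatically because every interior vertex keeps its two cycle-neighbors, so $v_s\in Y_j$ forces $v_{s-1},v_{s+1}\in X_j$ by \ref{XY3} --- contradicting Claim~\ref{cl: no hopping}. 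Your proposed substitute, a minimal-counterexample induction that ``swaps at $u$'' to produce ``either a longer cycle or a new cycle $C'$ of the same length,'' does not match the machinery: the vertex $u\in Y_{i-1}\setminus Y_{i-2}$ with $i\ge 2$ is a \emph{cycle} vertex, the height-reduction in Claim~\ref{hopreduction} reroutes \emph{paths} (not cycles), and nothing in the definitions lets you conclude that a rotation yields a longer cycle or a cycle with a different $X_1$-set. As written, this step does not close, and the missing observation is precisely that a consecutive pair in $X$ already constitutes a hopping path.

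Two smaller points. For \ref{MH2}, your hopping-path rotation for $X\cap Y=\emptyset$ is fine (and equivalent to the paper's appeal to \ref{MH1}), but your claim that vertices of $Y\cap V(C)$ ``have all their neighbors on $C$ trivially'' is not trivial and not quite right as stated: one must argue that if $v_i\in Y$ had a neighbor in $Y_0=V(G-C)$, then $v_i\in N_C(Y_0)=X_1\subseteq X$, contradicting $X\cap Y=\emptyset$; that disjointness is exactly what makes $N(Y)=N_C(Y)=X$. For \ref{MH3}, your rotation-and-splice argument is unnecessary: once \ref{MH2} is in hand, adjacent $y,y'\in Y$ give $y\in N(Y)\subseteq X$, so $y\in X\cap Y=\emptyset$, an immediate contradiction. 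So the later parts are salvageable, but \ref{MH1} needs to be rewritten along the lines above.
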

\begin{proof}
By Claim~\ref{cl: no hopping}, $G$ contains no $C$-hopping path and $C$ does not contain two consecutive vertices in $X_1$. First observe that if $C$ contains some consecutive vertices $v_i$ and $v_{i+1}$ in $X$, then the path of length $c-1$ between them yields a hopping path, a contradiction. This proves \ref{MH1}.

%By definition, $N_C(Y) = X$. 
If $v_i\in Y_j$, then $v_{i+1}\in X_{j}$ and \ref{MH1} implies $v_i\notin X$, so $X\cap Y=\emptyset$.
On the other hand, by definition $N_C(Y)=X$. If $v_i\in Y_j$ has an additional neighbor $w$ outside $C$, then $v_i\in N(Y_0) \subseteq X$, a contradiction as $X\cap Y=\emptyset$. This proves \ref{MH2}.

Finally suppose $v_i, v_j \in Y$ are neighbors. Then $v_i \in N(v_j) \subseteq X$, but also $v_{i+1} \in X$, contradicting \ref{MH1}. This proves \ref{MH3}.
\end{proof}

{\bf Remark 2.} The differences between our setup and Woodall's are the following. In our definition of $Y_0$, we consider all vertices outside of $C$ at once while Woodall considered individual vertices. Then we add (H2) in the definition of hopping paths and require $G$ to have no long good paths.  As we will show in the next subsection, the condition of having no long good paths is implied by our bound on $\sigma_2$.

\subsection{Proof of Theorem~\ref{main}}

Let $n\geq 13$ and let $G$ be an $n$-vertex $2$-connected graph with $\sigma_2(G)\geq \frac{2n-3}{3}$  having no spanning jellyfish as a subgraph. 

\begin{proof}[Proof of Theorem~\ref{main}]Fix an $L$-maximum cycle $C=v_1,\dots, v_c, v_1$. By Lemma~\ref{mainlem1}, $G-C$ contains only singleton components.

\begin{claim}\label{n-2}
$c \leq n-3$.
\end{claim}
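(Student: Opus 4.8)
\textbf{Proof proposal for Claim~\ref{n-2}.}

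The plan is to argue by contradiction. Suppose $c \geq n-2$, so $G-C$ has at most two vertices, all of them isolated in $G-C$ by Lemma~\ref{mainlem1}. I would first dispose of the easy sub-case $c = n$: then $C$ is a hamiltonian cycle, which is in particular a spanning jellyfish (with an empty set of pendant vertices), contradicting our assumption. So $c \in \{n-1, n-2\}$.

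Next I would handle $c = n-1$. Here $G-C$ consists of a single vertex $u$ with $N(u) = N_C(u)$, and since $G$ is $2$-connected, $d(u) \geq 2$ and $u$ has no two consecutive neighbors on $C$. If $u$ has a neighbor $v_i$ such that the ``next'' vertex $v_{i+1}$ is a normal vertex, or more generally if we can find a chord-type structure around $v_i$, we would rotate: the path $u, v_i, v_{i-1}, \ldots, v_{i+1}$ together with the remaining structure should let us either form a longer cycle (impossible, $C$ is longest) or exhibit a spanning jellyfish. The cleanest route: since $d(u) \leq \frac{n-2}{3}$ forces $u$ to be low (else $u$'s degree would be fine but $C \cup \{u\}$ would still need care), and since all low vertices form a clique disjoint from $\ldots$ — actually the key pressure point is that $u$ has $\sigma_2$-many non-neighbors' degrees to absorb: for each $v_j \notin N(u)$ we get $d(u) + d(v_j) \geq \frac{2n-3}{3}$, so $d(v_j) \geq \frac{2n-3}{3} - d(u)$. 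Counting edges between $u$'s non-neighbors on $C$ and using that $C$ is longest (so none of the rotation-created cycles beat $C$, which restricts where second neighbors of the $v_{i\pm1}$ can land, exactly as in Claim~\ref{cl: indep edge}-style arguments) should force $c < n-1$, a contradiction.

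For $c = n-2$ the same philosophy applies with two non-cycle vertices $u, u'$, each with $N = N_C$ and no two consecutive neighbors. If $N_C(u) = N_C(u')$ and $|N_C(u)| \geq 2$ then $C$ together with $u, u'$ attached at a common vertex — wait, they need a \emph{common} cycle vertex, which we get since $u, u'$ share their whole (nonempty, size $\geq 2$) neighborhood, giving a spanning jellyfish directly. So $N_C(u) \neq N_C(u')$, and then $(u,u')$ is a $C$-usable pair (each has a neighbor in $C$ by $2$-connectivity, and $|N_C(u) \cup N_C(u')| \geq 2$); but $u,u'$ is \emph{not} an edge, so it's the trivial path on which Claim~\ref{app1} does not directly apply — instead I would apply Lemma~\ref{neighbor2} to $C$ with $A = N_C(u)$, $B = N_C(u')$, which are nonempty sets of nonconsecutive vertices at pairwise distance $\geq 2$ (by maximality of $C$, via \eqref{eq: replace} with $s=1$, pairs from $A$ and $B$ are at distance $\geq 2$). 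With $q = 2$, Lemma~\ref{neighbor2}\ref{N3} gives $c \geq |A| + 2|B| + 1$ (choosing notation so $|B| \leq |A|$ or symmetrically), hence $n - 2 = c \geq |A| + 2|B| + 1$. Meanwhile $d(u) = |A| \geq 2$ and $d(u') = |B| \geq 2$, and if $u, u'$ are non-adjacent then $|A| + |B| = d(u) + d(u') \geq \frac{2n-3}{3}$; combining with $c \leq n-2$ and the symmetric bound $c \geq 2|A| + |B| + 1$ yields $3(|A|+|B|) \leq 2(c-1) \leq 2n - 6$, i.e.\ $|A| + |B| \leq \frac{2n-6}{3} < \frac{2n-3}{3}$, a contradiction.

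\textbf{Main obstacle.} The delicate case is $c = n-1$: with only one vertex $u$ outside the cycle we cannot invoke the two-vertex Lemma~\ref{neighbor2} trick, and we must instead squeeze a contradiction out of rotations of $C$ together with the single constraint $d(u) + d(v) \geq \sigma_2(G)$ for non-neighbors $v$ of $u$ on the cycle. I expect this to require an argument in the spirit of the classical Hopping Lemma / Bondy–Chvátal rotation: examine the successors $N_C(u)^{+}$ and predecessors $N_C(u)^{-}$, show (via maximality of $C$) that they contain no neighbors of each other nor of $u$'s non-neighbors in certain positions, and then a counting estimate on $|N_C(u)| + |N_C(u)^{+}| + (\text{non-neighbor degrees})$ overflows $c = n-1$. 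Making the bookkeeping tight enough for all $n \geq 13$ is where the real work lies; everything else is routine.
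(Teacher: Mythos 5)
There are two genuine gaps. First, the case $c=n-1$, which you flag as ``the delicate case'' and leave unfinished, is in fact immediate: by the paper's definition a jellyfish is a cycle $C$ together with a (possibly empty) set $X\subseteq V(G)-V(C)$ of vertices all adjacent to one common vertex of $C$, so when $c=n-1$ the cycle $C$ together with $X=\{u\}$ (where $u$ is the unique vertex off $C$, which has a neighbor on $C$ by connectivity) is already a spanning jellyfish; the same observation with $X=\emptyset$ disposes of $c=n$. No rotation or Hopping-Lemma machinery is needed, and as written your proof of this case does not exist --- you explicitly defer ``the real work.'' The paper dismisses $c\geq n-1$ in one line for exactly this reason and only has to treat $c=n-2$.

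Second, in the case $c=n-2$ your application of Lemma~\ref{neighbor2}\ref{N3} is not justified. That lemma with $q=2$ requires every pair $v_i\in A=N(z)$, $v_j\in B=N(z')$ to be at distance $0$ or at least $3$ on $C$. But since $zz'\notin E(G)$, a configuration such as $v_i\in N(z)$, $v_{i+1}\in N(z')$ does not by itself close up into a longer cycle, so the maximality of $C$ does not forbid $A$--$B$ pairs at distance $1$ or $2$; it only forbids such close pairs \emph{in one rotational direction} once a consecutive pair $v_1\in N(z)$, $v_c\in N(z')$ has been anchored (then $v_j\in N(z')$, $v_i\in N(z)$ with $j<i\leq j+2$ yields the longer cycle $z,v_1,\dots,v_j,z',v_c,\dots,v_i,z$). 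Even the weaker ``pairwise distance $\geq 2$'' you assert is false in general, and would in any case not meet the hypothesis of \ref{N3}. Consequently neither $c\geq |A|+2|B|+1$ nor its symmetric counterpart is available, and your final count collapses. The paper's fix is to cut the cycle at the anchored consecutive pair and apply the \emph{path} version, Lemma~\ref{pathbound}, to $Q=v_1,\dots,v_c$ with $t=2$, using $\mathbbm{1}_{A\cap B=\emptyset}$ (nonempty intersection gives a jellyfish) and \eqref{+2d} to reach $c\geq n-1$, a contradiction; your argument needs to be rerouted through that lemma. Your reduction to $A\cap B=\emptyset$ (a common neighbor of $z,z'$ on $C$ yields a spanning jellyfish) is correct and matches the paper.
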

\begin{proof} Suppose $c\geq n-2$. Since $G$ is connected and does not contain a spanning jellyfish,
we have $c=n-2$. Let $\{z, z'\} = V(G - C)$. Then $d(z) + d(z') \geq \sigma_2(G)$, and say $d(z) \geq d(z')$. 
 If $N(z) \cup N(z')$ contains no consecutive vertices, then $c \geq 2\sigma_2(G) > n$.  Therefore by symmetry we may assume $v_1 \in N(z)$ and $v_c \in N(z')$. 
If there exists $v_i \in N(z), v_j \in N(z')$ with $j < i \leq j+2$,  then $z, v_1, \ldots, v_j, z', v_c, \ldots, v_i, z$ is a  cycle longer than $C$. 
 We apply Lemma~\ref{pathbound} to $Q = v_1, \ldots, v_c, A = N(z), B = N(z'), t = 2$ and use~\eqref{+2d} to bound $2|A|+|B|$. If $A \cap B \neq \emptyset$ then we can find a spanning jellyfish. Thus we obtain using $n \geq 10$,
\[n-2 = c \geq \min\{2(|A| + |B|) - 3, 2|A| + |B| + t-4 + 1 + 1\} \geq \min\{\lceil \frac{4n-6}{3} - 3 \rceil, n-1\} = n-1.\] \end{proof}

\begin{claim}\label{goodpath}
Every good path in $G$ has at most $c+1$ vertices.
\end{claim}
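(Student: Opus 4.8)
\textbf{Proof proposal for Claim~\ref{goodpath}.}

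The plan is to argue by contradiction: suppose $G$ contains a good path $P = x_1, x_2, \ldots, x_p$ with $p \geq c+2$, where $N(x_1), N(x_p) \subseteq V(P)$ and neither $N(x_1)$ nor $N(x_p)$ contains two consecutive vertices of $P$. First I would dispose of the trivial case $x_1 x_p \in E(G)$: then $P$ together with the edge $x_1 x_p$ is a cycle on $p \geq c+2 > c$ vertices, contradicting that $C$ is a longest cycle. So assume $x_1 x_p \notin E(G)$, whence $d(x_1) + d(x_p) \geq \sigma_2(G) \geq \tfrac{2n-3}{3}$. Since each of $N(x_1), N(x_p)$ lies inside $V(P)$ and contains no two consecutive vertices of $P$, I want to set up a counting argument on the segments of $P$ analogous to Lemma~\ref{neighbor2}/Lemma~\ref{pathbound}: the neighbors of $x_1$ and $x_p$ on $P$ partition (most of) $P$ into intervals, and any interval joining a neighbor of $x_1$ to a neighbor of $x_p$ on the ``wrong side'' would create a cycle longer than $C$ by the standard rotation-type exchange (re-route through $x_1 \cdots x_p$). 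This forces large gaps and hence forces $p$ to be small, contradicting $p \geq c+2$ once we recall $c \geq \sigma_2(G) \geq \tfrac{2n-3}{3}$ from Theorem~\ref{thm: long cycle Ore} (and $c \geq n-3$ fails, but $c$ is still linear in $n$).

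More concretely, the key steps in order would be: (1) reduce to $x_1 x_p \notin E(G)$ as above; (2) observe that since $C$ is a \emph{longest} cycle and $p \geq c+2$, the path $P$ cannot be ``closed up'' even after one rotation — in particular if $x_i \in N(x_1)$ and $x_j \in N(x_p)$ with $i > j$ then (tracing the cycle $x_1, \ldots, x_j, x_p, x_{p-1}, \ldots, x_i, x_1$) we get a cycle on $p - (i - j - 1) \geq c+2 - (i-j-1)$ vertices, so $i - j \geq 3$, exactly the hypothesis needed to invoke Lemma~\ref{pathbound}; (3) apply Lemma~\ref{pathbound} with $Q = P$, $A = N(x_1)$, $B = N(x_p)$ (after possibly relabeling so that $x_{p-1} \notin B$, using that $x_p x_{p-1}$ being present would make $x_{p-1}$ a neighbor, handled by the non-consecutive condition / endpoint structure) and $t = 2$, together with~\eqref{+2d} to bound $2|A| + |B|$, obtaining a lower bound $p \geq$ (something like) $\tfrac{4n-6}{3} - O(1)$; (4) combine with the upper bound $p \leq n$ (trivially, $P$ has at most $n$ vertices) — wait, that is too weak, so instead I should push harder: $P$ omits at least $n - p$ vertices, but also I should use that $A \cap B \neq \emptyset$ would yield a spanning-jellyfish-like structure or a longer cycle, and that $A \cap B = \emptyset$ plus the endpoint conditions give the extra $+1$'s in Lemma~\ref{pathbound}(b); the resulting inequality $n \geq p \geq 2|A| + |B| + t - 4 + \cdots$ combined with $2|A| + |B| \geq n - 1$ from~\eqref{+2d} should be contradictory, just as in the proof of Claim~\ref{n-2}.

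The main obstacle I anticipate is handling the endpoint bookkeeping cleanly: Lemma~\ref{pathbound} is stated with the asymmetric hypothesis $w_{q-1} \notin B$ and with indicator bonuses $\mathbbm{1}_{w_q \notin A}$ and $\mathbbm{1}_{A \cap B = \emptyset}$, so I need to orient $P$ correctly (choosing which endpoint plays the role of $w_1$ vs $w_q$) and verify that the ``mixed'' configuration $i' > j'$ of part (b) actually occurs — if instead \emph{all} neighbors of $x_1$ precede all neighbors of $x_p$ on $P$, then part (a) gives $p \geq 2(|A| + |B| - 1) - 1 \geq 2\sigma_2(G) - 3 > n$ directly (since $d(x_1) + d(x_p) \geq \sigma_2(G)$ and each neighbor set lives on $P$), which is an even cleaner contradiction. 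The other subtlety is that $A, B$ might not be subsets of $V(Q)$ for the sub-path $Q$ I actually feed to the lemma (I may need to delete $x_1, x_p$ themselves and the first/last couple of vertices), so I must track these constant-size losses carefully; but since all the degree bounds are linear in $n$ and the losses are $O(1)$, with $n \geq 13$ the arithmetic closes, exactly as in Claims~\ref{n-2} and~\ref{app1}.
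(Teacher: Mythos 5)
Your proposal is correct and takes essentially the same approach as the paper: reduce to $x_1x_p \notin E(G)$, derive the gap condition $i-j\geq 3$ by the cycle-exchange argument, and apply Lemma~\ref{pathbound} to $Q=x_2,\ldots,x_{p-1}$ with $A=N(x_1)$ (oriented so that $x_1$ is the normal, larger-degree endpoint), $B=N(x_p)$, $t=2$, together with~\eqref{+2d}. The one loose end you flag (the tight case with $A\cap B\neq\emptyset$) is closed in the paper not by extracting a jellyfish but by observing that equality throughout the chain forces $c=n-2$, contradicting Claim~\ref{n-2}.
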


\begin{proof}
Suppose $P = x_1, \ldots, x_p$ is a good path with $p \geq c+2$. So $x_p, x_{p-1} \notin N(x_1)$ and $x_2\notin N(x_p)$. 
As $d(x_1) + d(x_p) \geq (2n-3)/3$, we may assume $x_1$ is normal. 
If there exists $x_i \in N(x_1)$ and $x_j \in N(x_p)$ with $i > j$, then as in the previous proof, $j - i \geq 3$. 
Apply Lemma~\ref{pathbound} with $Q = x_2, \ldots, x_{p-1}$, $A = N(x_1), B = N(x_p)$, $t =2$ and use~\eqref{+2d}  to obtain
\[n-2 \geq p-2 \geq c \geq \min\{2(|A|+|B|) - 3, 2|A| + |B| + t-4 + \mathbbm{1}_{x_{p-1} \notin A} + \mathbbm{1}_{A \cap B = \emptyset}\} \]\[\geq \max\{\lceil\frac{4n-6}{3} - 3\rceil,  n-1 - 2 + 1 +0\} = n-2,\]
which holds only if $n-2 =c$, contradicting Claim~\ref{n-2}.
\end{proof}

\begin{claim}\label{nolow}
$L \subseteq V(C)$.
\end{claim}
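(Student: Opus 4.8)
The plan is to show that no low vertex can lie outside the fixed $L$-maximum cycle $C$. Suppose for contradiction that some low vertex $u$ lies in $V(G-C)$. By Lemma~\ref{mainlem1} we know $G-C$ consists only of singleton components, so $N(u)=N_C(u)$ and $u$ has no two consecutive neighbors on $C$. First I would feed the structure of $C$ into the Modified Hopping Lemma: by Claim~\ref{goodpath}, $G$ has no good path with more than $c+1$ vertices, so Lemma~\ref{hopping} applies and gives us the sets $X=X(C)$, $Y=Y(C)$ with $X$ containing no two consecutive vertices of $C$, $X\cap Y=\emptyset$, $N(Y)\subseteq X$, and $Y$ independent. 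Since $u\in Y_0\subseteq Y$, the whole set $Y$ is nonempty and independent, and every vertex of $Y$ has all its neighbors in $X$.

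The core of the argument is then a counting/partition estimate on $C$, exactly in the style of the proofs of Claims~\ref{n-2} and~\ref{goodpath}. The key point is that $Y$ is a large independent set all of whose neighborhoods sit inside $X$, while $X$ omits consecutive vertices of $C$; moreover $Y\cap V(C)$ together with $V(G-C)\subseteq Y$ accounts for many vertices. I would take a lowest-height vertex of $Y$ (or just $u$ itself) together with another vertex of $Y$ — using that $|Y|\geq |V(G-C)|\geq 1$ and the $2$-connectivity of $G$ to guarantee $|X|\geq 2$ and that $Y$ is genuinely spread around $C$ — and apply Lemma~\ref{neighbor2} or Lemma~\ref{pathbound} to the cycle $C$ with the sets $A,B$ being neighborhoods of two vertices of $Y$. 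Because $u$ is low, the replacement observation~\eqref{eq: replace} gives the \emph{extra} distance gain $+I_{P\cap L}$ along any segment between consecutive elements of $N(Y)$ that contains a low vertex, which is what pushes $c$ above $n$. Concretely, I expect to derive $c\geq |A|+|B|+(\text{segments})$ and then use $d(u)+2d(v)\geq n-1$ from~\eqref{+2d} (with $v$ a high non-neighbor of $u$, which exists unless $u$ is adjacent to everything, and then $G$ is easily seen to contain a spanning jellyfish since $u$ would dominate $C$) to conclude $c>n$, contradicting $c\leq n$. If instead $A\cap B\neq\emptyset$, then two vertices of $Y$ share a common neighbor on $C$, and together with the cycle $C$ this common neighbor is the center of a spanning jellyfish whose legs are exactly the vertices of $V(G-C)$ — unless not all of $V(G-C)$ attaches there, in which case I fall back on the counting bound with the remaining vertices contributing an extra $+1$.

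The subtlety I would watch for is the boundary case where $V(G-C)$ is a single vertex $u$: then $Y$ might still be large (it can grow by absorbing vertices of $C$), but I must make sure there really are two distinct elements of $Y$ with the required neighborhood-distance properties, and that their neighborhoods are each of size at least $\lceil n/3\rceil$ — here the point is that a low $u$ has a high non-neighbor $v$ with $d(v)\geq n/3$, and every vertex of $Y$ reached from $u$ inherits a comparably large degree because $N(Y)\subseteq X$ forces those vertices to have many neighbors on $C$; combined with~\eqref{+2d} this gives the needed bound $2|A|+|B|\geq n-1$ (or $2|B|\geq\ldots$ after relabelling so the low vertex plays the role of $A$). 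I expect the main obstacle to be exactly this: carefully setting up which two vertices of $Y$ to compare and verifying that the ``$+1$'' gain from a low vertex in an intermediate segment is available, i.e. that some segment of $C$ determined by $N(Y)$ genuinely contains a low interior vertex — which I would argue by noting that all low vertices of $G$ outside $C$ lie in $Y$ (being neighbors only of high vertices that land in $X$) while by the definition of $L$-maximality any low vertex of $C$ adjacent to $u$ could be swapped, forcing such vertices to sit in the interiors of these segments. Once that gain is secured, the arithmetic is the same $\frac{4n}{3}$-versus-$n$ comparison that appears throughout Section~\ref{secb}, yielding the contradiction and hence $L\subseteq V(C)$.
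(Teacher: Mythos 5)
Your overall strategy---pair the low vertex $y\in V(G-C)$ with a second vertex, split according to whether their neighborhoods on $C$ come close together, and contradict either the length of $C$ via Lemma~\ref{neighbor2}/\ref{pathbound} or the $L$-maximality of $C$---is the right skeleton, and it is what the paper does. But two of your specific choices create genuine gaps. First, the detour through the Hopping Lemma is both unnecessary and under-supported: you propose to take the second comparison vertex from $Y=Y(C)$, but if that vertex lies in $Y\setminus Y_0$ (i.e.\ on $C$), then the replacement arguments that give the distance-$\geq 3$ hypothesis of Lemma~\ref{neighbor2}\ref{N3} and Lemma~\ref{pathbound} are no longer the simple cycle extensions of~\eqref{eq: replace}; they would require explicit hopping-path constructions that you do not carry out, and your assertion that such vertices ``inherit a comparably large degree because $N(Y)\subseteq X$'' is a non sequitur (the correct reason at most one vertex of $Y$ is low is that $Y$ is independent while $L$ is a clique). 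The clean route, which the paper takes, is to note that Claim~\ref{n-2} gives $c\leq n-3$, hence $|V(G-C)|\geq 3$, so one can take the second vertex $z$ to be another singleton component of $G-C$; then $y,z$ are automatically nonadjacent with $N(y)=N_C(y)$, $N(z)=N_C(z)$, and everything is elementary. You only assert $|V(G-C)|\geq 1$ and then worry at length about a singleton case that cannot occur.

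Second, your handling of the ``close neighborhoods'' branch does not go through as described. Two vertices of $Y$ sharing a common neighbor on $C$ does \emph{not} produce a spanning jellyfish (that would require \emph{all} of $V(G-C)$ to attach to one vertex of $C$), and the fallback ``extra $+1$'' is not an argument. The paper instead proceeds as follows: if every pair $v_i\in N(z)$, $v_j\in N(y)$ is at distance at least $3$ on $C$, then Lemma~\ref{neighbor2}\ref{N3} plus~\eqref{+2d} gives $c\geq d(y)+2d(z)+1\geq n$, already a contradiction (note this is the $|A|+2|B|+1$ bound, not the $\tfrac{4n}{3}$-versus-$n$ comparison you anticipate). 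Otherwise one may assume $v_1\in N(z)$ and $v_{c-\gamma}\in N(y)$ with $\gamma\in\{0,1\}$; Claim~\ref{goodpath} forces $\gamma=1$ (else $z,v_1,\dots,v_c,y$ is a good path on $c+2$ vertices), an explicit cycle swap using $L$-maximality rules out $j<i\leq j+2$, and then Lemma~\ref{pathbound}(b) with $t=2$ applied to $v_1,\dots,v_{c-1}$ yields $n-4\geq c-1\geq n-3$. Your proposal gestures at these tools but never secures the distance hypothesis or the $\gamma=1$ step, which are exactly where the work lies.
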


\begin{proof}
Suppose $G-C$ contains a low vertex $y$, and let $z$ be any other vertex in $G-C$.  If for all distinct $v_i \in N(z)$ and $v_j \in N(y)$, $|i-j| \geq 3$, then by Claim~\ref{eq: replace} with $q = 2$,   $n-3 \geq c \geq  d(y) + 2d(z) + 1 \geq n-1 + 1 =n$, a contradiction.

So we may assume without loss of generality that $v_1 \in N(z)$ and $v_{c-\gamma} \in N(y)$ for some $\gamma \in \{0,1\}$. Let $Q= v_1, \ldots, v_{c-\gamma}$. In fact, we must have $\gamma = 1$ otherwise $z,Q,y$ is a good path with $c+2$ vertices contradicting Claim~\ref{goodpath}. If for some $v_i \in N(z), v_j \in N(y)$, $j < i \leq j+2$, then the cycle $C'=z, v_1, \ldots, v_j, y, v_{c-1}, \ldots, v_i, z$ has at least $c$ vertices with equality only if $i=j+2$. In this case, $v_{i-1}$ cannot be adjacent to $y$, so $C'$ has more low vertices than $C$ does, a contradiction. Applying Lemma~\ref{pathbound} with $t=2$ together with~\eqref{+2d} yields the contradiction
\[n-4 \geq |V(Q)| \geq \min\{2(d(z) + d(y)) - 3, 2d(z) + d(y) + 2-4\} = \{\frac{4n-6}{3} - 3, (n-1) -2\} = n-3.\]
\end{proof}
%
%In particular, we may apply Lemma~\ref{hopping} to $G$ and $C$. 
 
 For each $u \in V(G-C)$, let $R(u) =R_C(u):= \{v_i \in V(C): v_{i-1}, v_{i+1} \in N(u)\}$. For each $v_i\in R(u)$, we call the cycle $C'=v_1\dots v_{i-1} u v_{i+1}\dots v_c$ the cycle obtained by {\em swapping} $v_{i}$ in $C$ with $u$.

\begin{claim}\label{switch}
Let $u \in V(G-C)$ and $v\in R_C(u) \setminus L$. Let $C'$ be the cycle obtained by swapping $v$ in $C$ with $u$. Then $X(C) = X(C')$ and $Y(C) = Y(C')$.  
\end{claim}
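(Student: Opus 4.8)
The plan is to show that the two cycles $C$ and $C'$ generate the same iterated sequences $Y_0, X_1, Y_1, X_2, \ldots$, from which $X(C)=X(C')$ and $Y(C)=Y(C')$ follow immediately. The key observation is that $C$ and $C'$ share almost all of their structure: $V(C)=V(C') = V(C)\setminus\{v\}\cup\{u\}$, and as cyclic sequences they differ only in that the vertex $v$ has been replaced by $u$ at the same position, with the two $C$-neighbors of $v$ (call them $v^-$ and $v^+$) now being the $C'$-neighbors of $u$. In particular, for any vertex $w$ other than $v,u$, its two neighbors on $C$ and on $C'$ coincide; and for a set $W\subseteq V(G)$ with $v,u\notin W$, a vertex $w$ has both cyclic-neighbors in $W$ with respect to $C$ if and only if it does with respect to $C'$.

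First I would record that $Y_0 = V(G-C) = V(G-C')$ is literally unchanged, since removing $v$ and inserting $u$ does not change the vertex set outside the cycle as a set — wait, one must be careful: $v\notin V(G-C)$ but $v\in V(G-C')$, and $u\in V(G-C)$ but $u\notin V(G-C')$. So in fact $Y_0(C)$ and $Y_0(C')$ differ exactly by swapping $v$ and $u$. However $v\notin L$, so the hypothesis that $v\in R_C(u)$ means $v^-, v^+\in N(u)$, hence $v^-,v^+\in N_{C'}(u)$, and since $u\in Y_0(C')$ we get $v^-,v^+\in X_1(C')$ — and the analogous statement holds on the $C$ side with $u$ and $v$ swapped. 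I would prove by simultaneous induction on $i$ the statement $P(i)$: ``$X_i(C)\setminus\{v,u\} = X_i(C')\setminus\{v,u\}$, and $Y_i(C)\setminus\{v,u\} = Y_i(C')\setminus\{v,u\}$, and moreover $v\in Y_i(C)\iff u\in Y_i(C')$ at level appropriate to track membership, while $u\in X_i(C)\iff v\in X_i(C')$.'' Actually a cleaner formulation: define the bijection $\phi$ on $V(G)$ swapping $u$ and $v$ and fixing everything else; I would show $\phi(Y_i(C)) = Y_i(C')$ and $\phi(X_i(C)) = X_i(C')$ for all $i\ge 0$, by induction. The base case $\phi(Y_0(C)) = Y_0(C')$ is exactly the swap observation. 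For the inductive step on $X_i$: $X_i(C) = N_C(Y_{i-1}(C))$; apply $\phi$ and use that $\phi$ preserves adjacency (it's just a relabeling of two vertices, but adjacency of $u,v$ to others is what matters), together with the fact that $N_C$ and $N_{C'}$ agree after accounting for the swap — here one uses $v^-,v^+\in N(u)$ so that the "cycle-neighbor" relation transported by $\phi$ lands correctly. For $Y_i$: membership of $v_j$ in $Y_i$ depends only on whether its two cyclic-neighbors lie in $X_i$; transport this through $\phi$ using that $C'$'s cyclic adjacency is $\phi$ applied to $C$'s cyclic adjacency (this is precisely why $C'$ is "$C$ with $v$ swapped for $u$"), plus $Y_{i-1}\subseteq Y_i$.

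The main obstacle I expect is the bookkeeping around the two special vertices $u$ and $v$ at the positions where the cyclic structures differ — specifically verifying that $v_j$'s cycle-neighbor condition is correctly transported for $j$ equal to the two indices adjacent to the swapped slot, and confirming that $v$ (resp. $u$) enters $Y(C')$ (resp. $Y(C)$) at the right moment. This is where the hypothesis $v\notin L$ is used: it guarantees (via the definition of $R_C(u)$, which only requires $v^{-},v^{+}\in N(u)$) that both cycle-neighbors of the swapped slot are genuine neighbors of the inserted vertex, so the "$v_{i-1},v_{i+1}\in X_i$" clause behaves symmetrically under $\phi$. Once the inductive claim $\phi(X_i(C))=X_i(C')$, $\phi(Y_i(C))=Y_i(C')$ is established, taking limits gives $\phi(X(C)) = X(C')$ and $\phi(Y(C))=Y(C')$; but $\phi$ only moves $u$ and $v$, and by Lemma~\ref{hopping}\ref{MH2} we have $X\cap Y=\emptyset$, while $v$ is on both cycles' vertex complement structure in a controlled way — I would finish by checking $u\in X(C')$, $v\in X(C)$ directly (immediate, since $u\in Y_0(C')\subseteq X$-generating and symmetrically) so that in fact $\phi$ restricted to $X(C)$ and to $Y(C)$ is the identity, giving $X(C)=X(C')$ and $Y(C)=Y(C')$ exactly.
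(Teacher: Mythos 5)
Your overall strategy (compare the iterated sequences for $C$ and $C'$) is sound, but the inductive invariant you commit to --- $\phi(X_i(C))=X_i(C')$ and $\phi(Y_i(C))=Y_i(C')$ for every $i$ --- is false, and the step you sketch for it does not go through. The transposition $\phi$ is not an automorphism of $G$, so it does not ``preserve adjacency'': $u$ and $v$ may have entirely different neighborhoods. Concretely, $u\in Y_0(C)$, so every $C$-neighbor of $u$ lies already in $X_1(C)$; but on the other side $u$ is a cycle vertex of $C'$ that enters $Y(C')$ only at level $1$ (its $C'$-neighbors $v^{-},v^{+}$ lie in $N(v)\subseteq X_1(C')$ because $v\in Y_0(C')$), so $N(u)\cap V(C')$ is only guaranteed to appear in $X_2(C')$. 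If $u$ has a neighbor $w$ on $C$ adjacent to no vertex of $Y_0(C)\setminus\{u\}$ and not to $v$, then $w\in X_1(C)\setminus X_1(C')$ while $\phi(w)=w$, so your identity already fails at the first level. Symmetrically, $v\in Y_1(C)$ but $v\in Y_0(C')$, so the heights are shifted by one in the other direction as well. The statement that is actually true, and that the paper proves, is the one-sided \emph{shifted} containment $X_i(C')\subseteq X_{i+1}(C)$ and $Y_i(C')\subseteq Y_{i+1}(C)$ for all $i$ (starting from $X_1(C')=N_{C'}(Z')\subseteq N_C(Y_0(C))\cup N_C(Y_1(C))=X_2(C)$, using $v\in Y_1(C)$), which yields $X(C')\subseteq X(C)$ and $Y(C')\subseteq Y(C)$ in the limit; the reverse containments then follow from the symmetric relation $u\in R_{C'}(v)$. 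Equality holds only for the limit sets, not level by level, and your proof needs to be restructured around this index shift.

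Two further points. First, you misidentify the role of the hypothesis $v\notin L$: that both $C$-neighbors of $v$ are neighbors of $u$ is exactly the definition of $v\in R_C(u)$ and has nothing to do with $L$. What $v\notin L$ actually buys is that $C'$ is again an $L$-maximum cycle (same length, at least as many low vertices), so that Lemma~\ref{mainlem1} applies to $C'$, $V(G-C')$ is independent, and $X(C'),Y(C')$ are defined within the same framework. Second, your closing verification is garbled: you assert $u\in X(C')$ ``since $u\in Y_0(C')$,'' but $u\notin Y_0(C')$ (it lies on $C'$); in fact $u\in Y_1(C')$ and $v\in Y_1(C)$, so by \ref{MH2} both $u$ and $v$ lie in $Y$ and outside $X$ for both cycles --- that, rather than membership in $X$, is the correct reason $\phi$ acts as the identity on $X$ and maps $Y$ onto $Y$.
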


\begin{proof}
For simplicity, set $Z = V(G-C)$ and $Z' = V(G-C') = Z - \{u\} \cup \{v\}$. 
Then $Z$ and $Z'$ must both be independent sets, and $uv \notin E(G)$ 
  since $v \in R(u)$. 
Recall that $v \in Y_1(C)$. Then
\[X_1(C') = N_{C'}(Z') = N_C(Z'-\{v\}) \cup N_C(v) \subseteq N_C(Y_0(C)) \cup N_C(Y_1(C)) = X_2(C)\enspace \text{ and }\]
 \[Y_1(C') = \{v_i: v_{i-1}, v_{i+1} \in X_1(C')\} \subseteq  \{v_i: v_{i-1}, v_{i+1} \in X_2(C)\} = Y_2(C).\]

Inductively, we obtain that for all $i \geq 1$, $X_i(C') \subseteq X_{i+1}(C)$ and $Y_i(C') \subseteq Y_{i+1}(C)$. Therefore $X(C') \subseteq X(C)$ and $Y(C') \subseteq Y(C)$. But as $u \in R_{C'}(v)$, % (with respect to $C'$)
  we symmetrically obtain $X(C) \subseteq X(C')$ and $Y(C) \subseteq Y(C')$ and equality holds.
\end{proof}

%By Lemma~\ref{hopping}, three conditions \ref{MH1}--\ref{MH3} hold for $X=X(C)$ and $Y=Y(C)$.
 Recall that $Y_0 = V(G-C)$. By Claim~\ref{n-2}, $|Y_0| \geq 3$. 
Our next claim provides a lower bound on $|R(x)|$.
%\begin{claim}
%$|Y_0|\geq 3$.	
%\end{claim}
%\begin{proof}
%If $|Y_0|\leq 1$, then adding an edge between the vertex in $Y_0$ (if necessary) to the cycle $C$ yields a spanning jellyfish, a contradiction.
%If $|Y_0|=2$, then let $Y_0=\{u,v\}$.
%As an element in $N(u)\cap N(v)$ yields a spanning jellyfish, neighborhoods of $u$ and $v$ are disjoint. 
%Furthermore, definition of $X$ ensures $N(u)\cup N(v)\subseteq X$, and no two vertices of $X$ are consecutive in $C$ by \ref{MH1}. Hence we have
%$|C|\geq  2|X| \geq 2|N(u)\cup N(v)|  \geq \frac{4n-2}{3} > n,$
%a contradiction.
%\end{proof}

\begin{claim}
For each $x\in Y_0\setminus L$, we have 
$|R(x)|\geq  \frac{1}{3}(|Y| + |Y_0|-2).$
\end{claim}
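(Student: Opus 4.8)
The plan is to count the vertices of $C$ by grouping them according to the neighborhood $N(x)=N_C(x)$ of the fixed normal vertex $x$. Write $C=v_1,\dots,v_c,v_1$. Since $x\notin V(C)$ and $G-C$ has only singleton components, $N(x)$ contains no two consecutive vertices of $C$, so $N(x)$ partitions $E(C)$ into $d(x):=d_C(x)$ segments, each of length at least $2$. A segment of length exactly $2$, say $v_{i-1}v_iv_{i+1}$ with $v_{i-1},v_{i+1}\in N(x)$, contributes a vertex $v_i$ to $R(x)$; a segment of length $\ell\geq 3$ contributes $\ell-1$ interior vertices none of which (a priori) lies in $R(x)$. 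Thus if $R(x)$ is small, many segments are long, and $c$ is large — but $c\leq n-3$ by Claim~\ref{n-2}, while $|Y_0|+c=n$ since $G-C$ consists of isolated vertices, so $c=n-|Y_0|$. The first step is therefore to make this bookkeeping precise: letting $r=|R(x)|$ and noting there are $d(x)$ segments of which at most $r$ have length $2$, we get $c\geq 2r+3(d(x)-r)=3d(x)-r$, hence $r\geq 3d(x)-c=3d(x)-(n-|Y_0|)$. Since $x$ is normal, $d(x)\geq (n-1)/3$, giving $r\geq |Y_0|-1$ as a first, crude bound; the real work is to replace the $-1$ by the sharper $\frac13(|Y|+|Y_0|-2)$, i.e. to extract an extra $\tfrac13(|Y|+1-|Y_0|)$ worth of vertices.

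The key idea for the improvement should be that vertices of $Y$ force extra length into the segments of $C$ relative to $N(x)$. Recall $Y\supseteq Y_0$, and for $v_i\in Y$ with height $h_Y(v_i)=j$ we have $v_{i-1},v_{i+1}\in X_j$, i.e. each such $v_i$ "sits between" two vertices of $X$. I would run an induction on height, or more directly use a swapping argument via Claim~\ref{switch}: for $v\in R(x)\setminus L$ one may swap $v$ out of $C$ and $x$ in, and $X,Y$ are preserved; iterating shows that the interior vertices of the long segments of $C$ (those not in $R(x)$) must absorb all of $Y\setminus Y_0$ together with the rest. Concretely, I expect to show that each long segment either contains a vertex that cannot be swapped (because it is low, hence in $L\subseteq V(C)$ by Claim~\ref{nolow}, or because it lies in $Y$) or contributes additional length. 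The cleanest route is probably: count, for each segment $S$ of $C$ determined by $N(x)$, a lower bound on $|V(S)\cap V(C)|$ in terms of $\mathbbm{1}_{S\cap (Y\setminus Y_0)\neq\emptyset}$ and the number of interior vertices of $S$ in $R(x)$, then sum over all $d(x)$ segments and use $|Y\setminus Y_0|=|Y|-|Y_0|$, $d(x)\geq (n-1)/3$, $c=n-|Y_0|$.

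The main obstacle I anticipate is controlling how vertices of $Y\setminus Y_0$ are distributed among the segments of $C$ cut out by $N(x)$: a single long segment could in principle contain many $Y$-vertices, so a naive "one extra vertex per $Y$-vertex" count is too optimistic, and this is exactly why the bound has a factor $\tfrac13$. I would handle this by using the height stratification — a vertex $v_i\in Y_j\setminus Y_{j-1}$ has both neighbors in $X_j=N_C(Y_{j-1})$, so each new $Y$-vertex is "certified" by a distinct pair of $X$-vertices sitting two apart, and $X\cap Y=\emptyset$ forces these to lie in the interiors of the long segments; combined with $\delta$-type degree bounds on those $X$-vertices (they are high, since $x$ is non-adjacent to each of them, so by~\eqref{+2d}-type reasoning their degrees are large), one should be able to amortize the cost and recover precisely the $\tfrac13$ factor. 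If the swapping/height approach proves too delicate, a fallback is a direct double-counting of edges between $Y_0$ and $C$ using the $\sigma_2$ bound, which at minimum reproves $r\geq |Y_0|-1$ and, with the observation that $|Y|$-many vertices each need two $X$-neighbors, can be pushed to the stated inequality.
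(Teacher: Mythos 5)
Your first step is correct and matches the paper's bookkeeping: the segments of $C$ cut out by $N(x)$ give $c\geq 2|R(x)|+3\bigl(d(x)-|R(x)|\bigr)=3d(x)-|R(x)|$, hence $|R(x)|\geq 3d(x)-(n-|Y_0|)\geq |Y_0|-1$. But this crude bound only implies the claim when $|Y|\leq 2|Y_0|-1$, and $Y$ may be much larger than $Y_0$, so the entire content of the claim lives in the improvement step — and that step you do not actually carry out. You offer several candidate strategies (induction on height, swapping via Claim~\ref{switch}, a per-segment count, amortization using degree bounds on $X$-vertices, a fallback double count of $E(Y_0,C)$) without committing to or completing any of them. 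You correctly identify the obstacle (one long segment can contain many vertices of $Y\setminus Y_0$, so ``one extra vertex per $Y$-vertex'' overcounts), but your proposed resolution via degree bounds on the $X$-vertices is not what resolves it and it is unclear it can be made to work; the paper uses no degree information about $X$-vertices in this claim.

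The missing concrete idea is the following. A vertex $y\in Y\setminus Y_0$ lies in $R(x)$ iff both $y^-$ and $y^+$ are in $N(x)$, so setting $Y_\ast=\{y\in Y\setminus Y_0: y^\ast\notin N(x)\}$ for $\ast\in\{+,-\}$ gives $Y_+\cup Y_-=Y\setminus(Y_0\cup R(x))$; one takes the larger of the two, say $|Y_+|\geq\frac12|Y\setminus(Y_0\cup R(x))|$, and augments the segment count by the single set $D=\{y^{+2}:y\in Y_+\}$. Disjointness of $D$ from the $3d(x)-|R(x)|$ vertices already counted is exactly where the Modified Hopping Lemma enters: for $y\in Y_+$ one has $y^+\in X\setminus N(x)$, so \ref{MH1} forces $y,y^{+2}\notin X\supseteq N(x)$, placing $y^{+2}$ outside both the length-$2$ segments and the initial triples of the long segments. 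This charges only \emph{one} extra vertex per element of $Y_+$ (not per element of $Y\setminus Y_0$), which sidesteps your amortization worry at the cost of a factor $\frac12$, and the arithmetic $n-|Y_0|=c\geq 3d(x)-|R(x)|+\frac12\bigl(|Y|-|Y_0|-|R(x)|\bigr)$ with $d(x)\geq\frac{n-1}{3}$ then yields precisely $|R(x)|\geq\frac13(|Y|+|Y_0|-2)$. Without this (or an equivalent) construction, your argument establishes only the weaker bound $|R(x)|\geq|Y_0|-1$.
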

\begin{proof}
Fix $x\in Y_0$. 
%We write $v_i^{+2},v_i^+, v_i^-,v_i^{-2}$ to denote the vertices $v_{i+2}, v_{i+1}, v_{i-1}, v_{i-2}$, respectively. 
For $\ast\in \{+,-\}$, let $Y_\ast=Y_\ast(x) = \{ y \in Y\setminus Y_0 : y^\ast \notin N(x) \}.$

As $y\in Y\setminus Y_0$ is in $R(x)$ if and only if both $y^{-},y^{+}$ are in $N(x)$, we know that $Y_+\cup Y_- = Y\setminus(Y_0\cup R(x))$.
By symmetry, assume that $|Y_+|\geq \frac{1}{2}|Y\setminus(Y_0\cup R(x))|$.
Consider 
$$A=\{v,v^+ : v^+ \in R(x)\}, \enspace B=\{v,v^+,v^{+2} : v\in N_C(x), v^+ \notin R(x)\} \text{ and } D=\{ v^{+2}: v\in Y_+\}.$$
The definition of $R(x)$  implies that $A$ and $B$ are disjoint.
If $v\in Y_+$, then  $v^+\in X \setminus N(x)$ by definition. Hence by Lemma~\ref{hopping},  both $v$ and $v^{+2}$ are not in $X$, and none of $v,v^+,v^{+2}$ is in $N(x)$, implying that $D$ is disjoint from $A$ and $B$.
Thus we have 
\begin{align*}
	 |C|&\geq |A|+|B|+|D| \ge 2|R(x)| + 3(d(x)-|R(x)|) + |Y^+| \\
	 &\geq 2|R(x)| + 3(d(x)-|R(x)|) + \frac{1}{2}\left(|Y| - |Y_0| - |R(x)|\right).
\end{align*}

As $|C|=n-|Y_0|$ and $d(x)\geq \frac{n-1}{3}$, this implies 
$|R(x)| \geq \frac{2}{3}(3d(x) -n + \frac{1}{2} |Y| +\frac{1}{2}|Y_0| ) \geq \frac{1}{3}(|Y| +|Y_0| -2 )$
as desired.
\end{proof}

The following claim yields a vertex in $X$ with many neighbors in $Y\setminus L$.

\begin{claim}\label{inequality}
There is a vertex $x\in X$ such that $d_{Y\setminus L}(x)> \frac{2}{3} |Y\setminus L|.$	
\end{claim}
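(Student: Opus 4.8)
The plan is to use a counting / averaging argument over the edges between $X$ and $Y\setminus L$. First I would invoke the previous claim: for every $x\in Y_0\setminus L$ we have $|R(x)|\geq \tfrac13(|Y|+|Y_0|-2)$. Since each $v\in R(x)$ is a vertex of $Y\setminus Y_0$ both of whose cyclic neighbors lie in $N(x)$, and more importantly (via Claim~\ref{switch}) swapping $v$ with $x$ does not change $X$ or $Y$, the set $R(x)$ records edges inside the bipartite-type graph between $Y_0\setminus L$ and $Y\setminus Y_0$. So the plan is to double count pairs $(x,v)$ with $x\in Y_0\setminus L$, $v\in (Y\setminus Y_0)$, and $v\in R(x)$ — equivalently (after a swap) to range over all of $Y\setminus L$ symmetrically. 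Summing $|R(x)|$ over $x\in Y_0\setminus L$ gives at least $|Y_0\setminus L|\cdot\tfrac13(|Y|+|Y_0|-2)$ such pairs, and hence some vertex $v\in Y\setminus Y_0$ is hit by many $x$'s; but we want a vertex of $X$, so I would instead apply the bound with roles arranged so that the counted neighbor lies in $X$. Concretely: each pair $(x,v)$ with $v\in R(x)$ yields $v^-,v^+\in N(x)\cap X$; pushing the count onto these $X$-vertices and averaging should produce an $x\in X$ with $d_{Y\setminus L}(x)>\tfrac23|Y\setminus L|$.

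In more detail, the key steps in order are: (1) By Claim~\ref{switch}, for every $x\in Y_0\setminus L$ and every $v\in R(x)\setminus L$, the cycle $C'$ obtained by swapping is again an $L$-maximum cycle with $X(C')=X$, $Y(C')=Y$, and now $v\in Y_0(C')$ while $x\in Y\setminus Y_0(C')$. Hence the previous claim applies equally to $C'$ and to $v$: every vertex of $Y\setminus L$, when it is the ``outside'' vertex, has at least $\tfrac13(|Y|+|Y_0|-2)$ swap-partners. (2) Build an auxiliary graph $F$ on vertex set $Y\setminus L$ where we join $y$ to $y'$ whenever (with respect to some $L$-maximum cycle in the common orbit) $y'\in R(y)$; by step (1), $\delta(F)\geq \tfrac13(|Y|+|Y_0|-2)\geq \tfrac13|Y|$ roughly, so $F$ has at least $\tfrac16|Y|^2$ edges. (3) Each edge $yy'$ of $F$ certifies that the two cyclic neighbors $(y')^-,(y')^+$ (on the relevant cycle) lie in $N(y)\cap X$; charge the edge to, say, $(y')^+\in X$. (4) Average: the total charge is at least the number of edges, and it is distributed over $X$, but I must be careful that a vertex of $X$ can be charged by at most its $Y$-degree worth of edges, so $\sum_{x\in X} d_{Y\setminus L}(x) \geq$ (twice the edge count of $F$), and then an averaging over $X$ — using $|X|\le |C| = n-|Y_0|$ together with $|Y\setminus L|$ not too small — yields the existence of $x\in X$ with $d_{Y\setminus L}(x) > \tfrac23 |Y\setminus L|$.

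The precise bookkeeping of who-is-in-$Y_0$-after-a-swap is the main obstacle: the sets $R(x)$ are defined relative to a fixed cycle, and the vertices $v^-,v^+$ that witness edges of $F$ are the cyclic neighbors on that cycle, which changes under swaps. I would handle this by fixing one reference $L$-maximum cycle $C$ and observing that $v\in R_C(x)$ already gives $v^-,v^+\in N(x)$ directly (no swap needed for the charging step — the swap is only needed to transfer the lower bound on $|R(\cdot)|$ from $Y_0$-vertices to all $Y$-vertices). So the cleaner route: apply the previous claim only to $x\in Y_0\setminus L$, getting $\sum_{x\in Y_0\setminus L}|R_C(x)|\geq |Y_0\setminus L|\cdot \tfrac13(|Y|+|Y_0|-2)$; each term contributes edges landing in $Y\setminus Y_0$; some $v\in Y\setminus Y_0$ then lies in many $R_C(x)$; for that $v$, its two neighbors $v^-,v^+\in X$ are each adjacent (in $G$) to all those $x$'s, so one of $v^-,v^+$ has the desired large $Y\setminus L$-degree after comparing $|Y_0\setminus L|$ to $|Y\setminus L|$ and using $d(x)\ge (n-1)/3$. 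The arithmetic comparing these cardinalities — ensuring the ``$>\tfrac23$'' is strict — is the delicate part and will use $n\geq 13$, $c\le n-3$, and $\sigma_2(G)\ge (2n-3)/3$ together with the fact that low vertices of $G$ all lie on $C$ (Claim~\ref{nolow}), so $Y\setminus L = Y$ and the statement simplifies.
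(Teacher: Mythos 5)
Your approach has a genuine gap: the edge count you extract from the $R(\cdot)$ bound is too weak to give the conclusion. Following your ``cleaner route,'' you count only the incidences $(x,v)$ with $x\in Y_0$ and $v\in R_C(x)$, which yields about $\tfrac{1}{3}|Y_0|\bigl(|Y|+|Y_0|-2\bigr)$ pairs, i.e.\ $\sum_{x\in X}d_{Y\setminus L}(x)\gtrsim \tfrac16|Y|^2$ after charging to $X$. Averaging over $X$ (even with the correct bound $|X|\le\tfrac12|C|=\tfrac12(n-|Y_0|)$ from \ref{MH1}, rather than the $|X|\le|C|$ you wrote) gives a vertex of $X$ with $Y\setminus L$-degree at least $\tfrac{|Y|^2}{3(n-|Y_0|)}$, and for this to exceed $\tfrac23|Y\setminus L|$ you would need $|Y|>2|C|$, which is impossible since $Y\setminus Y_0\subseteq V(C)$ and $|Y_0|<|C|$. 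Equivalently, your pigeonhole version produces a vertex $v^+\in X$ with many neighbors in $Y_0$, but ``many'' is measured against $|Y_0|$ and $|Y\setminus Y_0|$, not against $|Y\setminus L|$; since $|Y_0|$ can be as small as $3$, this says essentially nothing. The quantity $|R(x)|$ simply does not control enough of the adjacencies between $Y$ and $X$, and no amount of bookkeeping about swaps fixes that.

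The paper's proof rests on two facts you never invoke: by \ref{MH2} \emph{every} neighbor of \emph{every} vertex of $Y$ lies in $X$, and every vertex of $Y\setminus L$ is normal, hence has degree at least $\tfrac{n-1}{3}$. Together these give $|E(X,Y\setminus L)|\ge\tfrac{n-1}{3}|Y\setminus L|$ --- a bound linear in $n$ per vertex of $Y\setminus L$, rather than quadratic in $|Y|$ overall --- and then \ref{MH1} gives $|X|\le\tfrac12(n-|Y_0|)$, so averaging yields $x\in X$ with $d_{Y\setminus L}(x)\ge\tfrac{2(n-1)}{3(n-|Y_0|)}|Y\setminus L|>\tfrac23|Y\setminus L|$, the strict inequality coming from $|Y_0|\ge 3$. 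No swapping, no Claim~\ref{switch}, and no lower bound on $|R(x)|$ is needed here. Two smaller slips in your write-up: Claim~\ref{nolow} gives $Y_0\cap L=\emptyset$ but not $Y\cap L=\emptyset$, since $Y\setminus Y_0$ lies on $C$ and may contain one low vertex, so ``$Y\setminus L=Y$'' does not follow; and the well-definedness of your auxiliary graph $F$ across different cycles in the ``orbit'' is asserted but never established.
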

\begin{proof}
Note that $Y$ is an independent set, so at most one vertex in $Y$ belongs to $L$.
By \ref{MH2} and  the definition of normal vertices, we have
$|E(X, Y\setminus L)| \geq \frac{n-1}{3}|Y\setminus L|.$
By \ref{MH1}, we have $|X|\leq \frac{1}{2}|C| \leq \frac{1}{2}(n-|Y_0|)$, so there exists a vertex $x\in X$ with 
$$d_{ Y\setminus L}(x) \geq \frac{(n-1)(|Y\setminus L|)}{3|X|} \geq \frac{2(n-1)|Y\setminus L|}{3(n-|Y_0|)} > \frac{2}{3}|Y\setminus L|.$$
\end{proof}

Now we prove the main theorem.
As all  claims above hold for an arbitrary $L$-maximum cycle $C$, we may choose an $L$-maximum cycle $C$ and a vertex $x\in X(C)$ satisfying 
\begin{enumerate}[label={\rm (Y\arabic*)}]
\item \label{Y1} $d_{Y(C)}(x)> \frac{2}{3}|Y\setminus L|$, and 
\item \label{Y2} $N_{Y_0(C)}(x)$ is maximal among the choices of $L$-maximum cycle $C$ and $x$ satisfying (Y1).

\end{enumerate}

If $N_{Y_0(C)}(x) = Y_0(C)$, then $G$ has a spanning jellyfish. So choose $y\in Y_0(C)$ that is not adjacent to $x$. By Claim~\ref{nolow}, $y \notin L$.
If there is a vertex $z\in R(y)\cap N(x) \setminus L$, then we can swap $y$ with $z$ to obtain an $L$-maximum cycle $C'$. (Note that $z\notin L$, so $C'$ is still $L$-maximum.)
 Then $C'$ satisfies \ref{Y1} as Claim~\ref{switch}
implies $d_{Y(C')}(x)=d_{Y(C)}(x)$. On the other hand, we have $|N_{Y_0(C')}(x)| = |N_{Y_0(C)}(x)|+1$, a contradiction to  \ref{Y2}.

Thus $R(y)$ and $N(x)$ are subsets of $Y$ which possibly intersect at one vertex in $Y\cap L$. 
Moreover, as $y\notin N(x)\cup L$, we have 
$|Y\setminus L|-1  \geq  (|R(y)|- |Y\cap L|) + |N_{Y\setminus L}(x)|$, implying 
$ |Y|-1 \geq |R(y)|+|N_{Y\setminus L}(x)|.$
Using \ref{Y1} and the previous two claims, we obtain
\begin{align*}
|Y|-1 & \geq  |R(y)| + |N_{Y\setminus L}(x)|> 	\frac{1}{3}(|Y|+|Y_0|-2) + \frac{2}{3}|Y\setminus L| \geq |Y| + \frac{1}{3} (|Y_0|-2) - \frac{2}{3}|Y\cap L|.
\end{align*}
However, as $|Y_0| \geq 3$ and $|Y\cap L|\leq 1$, the final term is at least $|Y|-\frac{1}{3}$, a contradiction. \end{proof}

	\section{Existence of spanning brooms}\label{secc}
In this section, we prove Theorem~\ref{broom}.
For this, the following lemma is useful.
\begin{lemma}\label{lem: voss}
If $G$ is a $2$-connected, $n$-vertex, nonhamiltonian graph with $\sigma_2(G) =n-1$, then $n$ is odd and $G$ contains a copy of $K_{k,k+1}$ where $n=2k+1$. 
\end{lemma}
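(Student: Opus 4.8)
\textbf{Proof plan for Lemma~\ref{lem: voss}.}
The plan is to invoke the classical structural characterization of $2$-connected nonhamiltonian graphs with large $\sigma_2$, which is essentially due to Bondy (and is the equality case of the Bondy--Chv\'atal / Erd\H{o}s--Gallai circumference bounds); the precise statement I would use is the Nash-Williams/Voss-type result that a $2$-connected graph $G$ on $n$ vertices with $c(G) < n$ satisfies $c(G) \geq \min\{2\delta(G), n\}$ with the extremal graphs being well understood, strengthened to the $\sigma_2$ setting via Bondy's theorem that a $2$-connected graph has a cycle of length at least $\min\{n, \sigma_2(G)\}$ (Theorem~\ref{thm: long cycle Ore}) and the study of when this is tight. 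Concretely: since $G$ is $2$-connected and nonhamiltonian, Theorem~\ref{thm: long cycle Ore} gives a cycle $C$ of length exactly $c := c(G)$ with $n-1 = \sigma_2(G) \le c \le n-1$, so $c = n-1$; let $z$ be the unique vertex outside $C$. The goal is to show $G - z$ is a union of two ``sides'' forming $K_{k,k+1}$, i.e.\ that $G$ is bipartite with parts of sizes $k$ and $k+1$.

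First I would set up the rotation/Hopping machinery that is already developed in this paper: since $G - C$ has a single singleton component $\{z\}$, Lemma~\ref{hopping} applies once we check there is no good path on more than $c+1 = n$ vertices. A good path on $n$ vertices would be a hamiltonian path whose endpoints have all neighbors inside the path and non-consecutively; combined with $\sigma_2(G) = n-1$ this is exactly the borderline case, and I would argue that such a path either closes up to a hamiltonian cycle (contradiction) or forces $G$ to already be the claimed $K_{k,k+1}$, so we may assume Lemma~\ref{hopping} is available. Then \ref{MH1}--\ref{MH3} give us an independent set $Y \supseteq \{z\}$ with $N(Y) \subseteq X$, $X \cap Y = \emptyset$, and $X$ containing no two consecutive vertices of $C$. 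The key degree count is: every vertex of $Y$ has all its neighbors in $X$, so $\sum_{y \in Y} d(y) \le |E(X,Y)|$, while $|X| \le \lfloor c/2 \rfloor = \lfloor (n-1)/2 \rfloor$. Since $z \in Y$ is nonadjacent to every vertex of $Y \setminus \{z\}$ (as $Y$ is independent), each such vertex $w$ satisfies $d(z) + d(w) \ge n-1$; pushing this together with $|X| \le (n-1)/2$ and $N(Y) \subseteq X$ forces, by a now-standard extremal argument, that $|X| = (n-1)/2$ exactly (so $n-1$ is even, $n$ is odd, $n = 2k+1$), that $Y = V(C) \setminus X \cup \{z\}$ has size $k+1$, that $X$ has size $k$, and that every vertex of $Y$ is adjacent to \emph{every} vertex of $X$ — i.e.\ $G \supseteq K_{k,k+1}$ with parts $X$ (size $k$) and $Y$ (size $k+1$).

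The last thing to pin down is that there are no extra edges beyond $K_{k,k+1}$, i.e.\ $X$ and $Y$ are each independent — but actually the lemma only asserts ``$G$ contains a copy of $K_{k,k+1}$,'' so once we have exhibited $X$ and $Y$ with all $k(k+1)$ cross-edges present, and $|X| + |Y| = n$, we are done; we do not need $G$ to equal $K_{k,k+1}$. So the argument really reduces to: (i) $c(G) = n-1$ from Theorem~\ref{thm: long cycle Ore}; (ii) running the Hopping Lemma to get $X, Y$; (iii) the inequality chain $|E(X,Y)| \ge \sum_{y\in Y} d(y) \ge d(z)|Y\setminus\{z\}|/\text{(something)} + \dots \ge$ a bound that saturates only when $n$ is odd and $X,Y$ have sizes $k, k+1$ with complete bipartite adjacency.

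\textbf{Main obstacle.} The delicate point will be step (ii)--(iii): making the degree/size inequalities actually saturate. A priori Lemma~\ref{hopping} only gives $Y \ni z$ nonempty with $N(Y) \subseteq X$ and $|X| \le (n-1)/2$; it does not immediately say $Y$ is large or that adjacency across $X \times Y$ is complete. I expect the real work is a careful accounting: for each $v_i \in X$, its two cycle-neighbors $v_{i-1}, v_{i+1}$ are \emph{not} in $X$ (by \ref{MH1}), and one must show they lie in $Y$ (or contribute to the count) so that $|Y|$ and $|X|$ add up to $n$ forcing $|X| = \lfloor (n-1)/2\rfloor$; then the bipartite-complete conclusion comes from the fact that $\sum_{y} d(y) = |E(X,Y)| \le |X|\cdot|Y|$ must meet the lower bound $\ge (n-1)|Y|/2 - O(1)$ coming from $\sigma_2$, and equality everywhere forces every $y \in Y$ to see all of $X$. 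I would need to be careful about the single possible low vertex and the $\pm 1$ slack from $z$ itself being counted, but no genuinely new idea beyond the Hopping Lemma framework already in the paper should be required.
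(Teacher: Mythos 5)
Your route through Lemma~\ref{hopping} is genuinely different from the paper's, which never invokes the Hopping Lemma here: the paper picks the $(n-1)$-cycle $C$ so that the outside vertex $v$ has maximum degree and then splits on $d(v)=(n-1)/2$ versus $d(v)<(n-1)/2$, handling the second case by a direct count. The main branch of your plan does work. Since $G$ is nonhamiltonian, Theorem~\ref{thm: long cycle Ore} gives $c=n-1$, so every path has at most $c+1=n$ vertices and the hypothesis of Lemma~\ref{hopping} is automatic; you obtain $X,Y$ with $z\in Y$. If $|Y|\geq 2$, then \ref{MH2}, \ref{MH3} and \ref{MH1} give $n-1=\sigma_2(G)\leq d(y)+d(y')\leq 2|X|\leq 2\lfloor (n-1)/2\rfloor$, forcing $n$ odd, $|X|=(n-1)/2$, and every $y\in Y$ complete to $X$; then $N(z)=X$ occupies every other vertex of the even cycle $C$, so $Y_1=(V(C)\setminus X)\cup\{z\}$ already has $(n+1)/2$ vertices and $K_{k,k+1}$ appears. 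This is a clean alternative to the paper's first case and arguably buys a more conceptual saturation argument.

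The genuine gap is the case $Y=\{z\}$, which your inequality chain cannot touch: summing degrees over a one-element independent set yields only the vacuous $d(z)\leq |X|=d(z)$. This case occurs exactly when $z$ has no two neighbors at cyclic distance $2$ on $C$, which together with the absence of consecutive neighbors gives only $d(z)\leq (n-1)/3$, and your hoped-for accounting that ``$|X|$ and $|Y|$ add up to $n$'' fails here --- the vertices $v_{i\pm 1}$ for $v_i\in X$ need never enter any $Y_j$, so $Y$ can stall at $\{z\}$. Ruling this out requires machinery outside the Hopping Lemma, and it is precisely what the paper's second paragraph supplies: (i) choose $C$ among all $(n-1)$-cycles so that $d(z)$ is maximum, so that swapping $z$ into the cycle in place of a vertex of strictly larger degree is forbidden (this is how one first shows no two neighbors of $z$ are at distance $2$ when $d(z)<(n-1)/2$, and it is also needed to kill the $(n-1)$-cycles produced below); and (ii) a direct count showing that if all neighbors of $z$ are pairwise at distance at least $3$ and $v_1\in N(z)$, then $v_2$ is nonadjacent to $v_{j+1}$ and $v_{j+2}$ for every other $v_j\in N(z)$, whence $d(v_2)+d(z)\leq n-d(z)<\sigma_2(G)$, a contradiction. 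Without adding both ingredients your proof is incomplete.
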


\begin{proof}
By Theorem~\ref{thm: long cycle Ore}, such a graph $G$ contains a cycle $C = v_1,\ldots, v_{n-1},v_1$ of length $n-1$. Choose such a cycle so that the vertex $v$ outside of $C$ has as large degree as possible. Since $G$ is nonhamiltonian, $v$ has no consecutive neighbors in $C$ and therefore $d(v) \leq (n-1)/2$. If $d(v) = (n-1)/2$, then $n$ is odd and by symmetry we may assume $N(v) = \{v_1, v_3, \ldots, v_{n-2}\}$. For each $1\leq k \leq (n-1)/2$, the vertex $v_{2k}$ can be swapped with $v$ to obtain a new cycle $C'$. Thus $v_{2k}$ has no consecutive neighbors in $C'$ and $d(v_{2k}) \geq \sigma_2(G) - d(v) = (n-1)/2$. Then $N(v_{2k}) = N(v)$, and $G$ contains a $K_{(n-1)/2, (n+1)/2}$ with parts $\{v_1, v_3, \ldots, v_{n-2}\} \cup \{v, v_2, v_4,\ldots, v_{n-1}\}$.

Now suppose $d(v) < (n-1)/2$. If $v_{i}, v_{i+2} \in N(v)$ for some $i$, then $v_{i+1} \notin N(v)$. But the choice of $v$ ensures $d(v)\geq d(v_{i+1})$, thus 
$d(v) + d(v_{i+1}) \leq 2d(v) < n-1=\sigma_2(G)$, a contradiction. So each pair of neighbors of $v$ have distance at least $3$ in $C$. Suppose $v_1 \in N(v)$. For all $v_i \in N(v)-\{v_1\}$, we have $v_{i+1},v_{i+2} \notin N(v_2)$, as otherwise we can find either a hamiltonian  cycle, or a $n-1$-cycle in $G-v_{i+1}$ with $d(v_{i+1})\geq \sigma_2(G)-d(v) > d(v)$, a contradiction. So $d(v_2) \leq |V(C)| -|\{v_{i+1},v_{i+2}: v_i \in N(v) - \{v_1\}\}| - |\{v_2\}|$, which implies $d(v_2) + d(v) \leq (n-1) -d(v) +1 <\sigma_2(G)$, a contradiction. This proves the lemma.
\end{proof}

\begin{proof}[Proof of Theorem~\ref{broom}] Let $G$ be a counter-example.
By Theorem~\ref{main}, $G$ has a cut-vertex, say $w$. Let $S_1,\ldots,S_t$ be the vertex sets of all components of $G-w$ and $s_j=|S_j|$ for $1\leq j\leq t$. %with $|W_1|\geq |W_2|\geq\ldots\geq |W_t$, and for $1\leq j\leq t$, let $G_j=G[W_j+w]$.
Then
%\begin{equation}\label{equ1} \mbox{\em 
$d_G(u) \leq s_j\;$ for each $1\leq j\leq t$ and  $u\in S_j$.%}
 %\end{equation}
 
 So, if $t\geq 4$ and $S_1,S_2$ are two smallest sets, then for each $u\in S_1$ and $u'\in S_2$, $d(u)+d(u')\leq  s_1+s_2\leq (n-1)/2<\sigma_2(G)$, a contradiction.
So we have 
\begin{align}\label{eq: three}
 d(x)+d(y)+d(z) \geq \frac{3}{2}\sigma_2(G)\geq n-\frac{3}{2} \enspace \textit{ for all pairwise nonadjacent vertices } x,y,z.
  \end{align}

{\bf Case 1:} $t=3$. 
Choose any $u_j\in S_j$ for each $j$, then they are pairwise nonadjacent, so 
$n-\frac{3}{2}\leq d(u_1)+d(u_2)+d(u_3)\leq s_1+s_2+s_3\leq n-1$. Thus each $u_i$ has degree $s_i$ and adjacent to $w$. Thus $w$ is adjacent to every vertex in $G$, which yields a spanning star.

{\bf Case 2:} $t=2$. If each of $G_1:=G[S_1]$ and  $G_2:=G[S_2]$ has a hamiltonian  cycle, then $G$ has a hamiltonian  path, which is a broom. Suppose $G_1$ has no hamiltonian  cycle. Then by Ore's Theorem, there are non-adjacent $u,u'\in S_1$ with $d_{G_1}(u)+d_{G_1}(u')\leq s_1-1$. So
$$\frac{2n}{3}-1\leq \sigma_2(G)\leq d_{G}(u)+d_{G}(u')\leq d_{G_1}(u)+d_{G_1}(u')+2\leq s_1+1.$$
Therefore, $s_1\geq \frac{2n}{3}-2$ and $s_2=n-1-s_1\leq \frac{n}{3}+1<\frac{2n}{3}-1$. So, $G_2$ does have a hamiltonian  cycle, thus $G$ has a path $P_2$ starting from $w$ and passing through all $S_2$.

Choose a vertex $x\in S_2$.
For every non-adjacent vertices $u,v\in S_1$, \eqref{eq: three} yields that 
\begin{align}\label{eq: sigma3}
d(u)+d(v)\geq \lceil n-\frac{3}{2}-d(x)\rceil  \geq s_1.	
\end{align}
Let $H$ be obtained from $G_1$ by adding $w$ adjacent to all vertices in $S_1$. Since $G_1$ is connected, $H$ is $2$-connected. As $\sigma_2(H)\geq s_1$ by \eqref{eq: sigma3}
, Lemma~\ref{lem: voss} yields that either  $H$ has a hamiltonian  cycle or $s_1=2s$ is even and $H$ contains $K_{s,s+1}$.

{\bf Case 2.1:} $H$ has a hamiltonian  cycle $C=u_1\dots u_{s_1}w u_1$. If one of the edges
$wu_1,wu_{s_1}$ is in $G$, then we have a path $P_1$ starting from $w$ and visiting all vertices in $S_1$. Together with $P_2$, it will form a hamiltonian  path in $G$, a contradiction. If $wu_1,wu_{s_1}\notin E(G)$, then $u_1u_{s_1}\notin E(G)$ as $G_1$ has no hamiltonian  cycles. Thus
 $d_{G}(u_1)+d_{G}(u_{s_1})\leq s_1-1$, a contradiction to \eqref{eq: sigma3}.
 
 {\bf Case 2.2:} $s_1=2s$ is even and $H$ contains $K_{s,s+1}$. Then $G_1$ contains either 
$K_{s,s}$ or $K_{s-1,s+1}$. Since $G_1$ is not hamiltonian, the latter is true. So, since  $w$ has a neighbor in $S_1$, the graph $G[S_1+w]$ contains a spanning broom whose long path starts from $w$. Together with $P_2$, this yields a spanning broom in $G$ as desired.
\end{proof}

\bigskip

{\bf Concluding remarks.}
The provided extremal example for Theorem~\ref{main} (see Fig. 2) is $2$-connected but not $3$-connected. Based on this fact, it is natural to further ask how higher connectivity affects the minimum degree threshold. In other words, what is the minimum $f_k(n)$ such that every $n$-vertex $k$-connected graph contains a spanning jellyfish? In particular, what is $f_3(n)$? Consider a graph $F$ with vertex partition $V_1\cup V_2\cup V_3\cup V_4$ of four sets of sizes $\lceil n/4\rceil \geq |V_1|\geq |V_4|\geq |V_2|\geq |V_3|\geq \lfloor n/4\rfloor$, where $F[V_{i},V_{i+1}]$ is a complete bipartite graph for each $i\in \{1,2,3\}$. As this graph $F$ lacks a spanning jellyfish, $f_k(n) \geq (n-3)/4$ for any $k\leq (n-3)/4$. Determining the precise values of $f_k(n)$ would be an interesting open problem.

Call a graph obtained from a spider by replacing a longest path starting from the branching vertex with a cycle an {\bf octopus}. In particular, an octopus is a subdivision of jellyfish.
Another interesting problem would be to find the minimum $g_k(n)$ such that each $n$-vertex $k$-connected graph $G$ with $\delta(G)\geq g_k(n)$ contains a spanning octopus. If in the previous construction $F$, we have $|V_1|, |V_4| > |V_2|, |V_3|$, then $F$ is also highly connected with minimum degree close to $n/4$ but lacks a spanning octopus. 
Determining $g_k(n)$ for $k \in \{1,2\}$ would already be interesting.

\paragraph{Acknowledgement.} We thank Alexander Sidorenko for attracting our attention to the results in~\cite{keyring1, keyring2, keyring3}.


\begin{thebibliography}{99}
\small


\bibitem{BT}
H. Broersma and H. Tuinstra, Independence trees and Hamilton cycles, Journal of Graph
Theory 29 (1998), 227--237.


\bibitem{CFHJL}
G. Chen, M. Ferrara, Z. Hu,  M. Jacobson and H. Liu, 
Degree conditions for spanning brooms
J. Graph Theory 77 (2014),  237--250.


\bibitem{CS}
M. Chimani and J. Spoerhase, Approximating spanning trees with few branches, Theory
of Computing Systems 56 (2015), 181--196.


\bibitem{D}
G. A. Dirac:
Some theorems on abstract graphs, \emph{Proc. London Math. Soc. (3)} \textbf{2}
(1952), 69--81.

\bibitem{FKKLR}
  E. Flandrin, T. Kaiser, R. Kužel, H. Li, and Z. Ryjáček, Neighborhood unions and extremal spanning trees, Discrete Math 308 (2008), 2343--2350. 
  
  \bibitem{GHSV1}
 L. Gargano, P. Hell, L. Stacho, and U. Vaccaro, Spanning trees with bounded number of branch vertices, Lect. Notes Comput Sci 2380 (2002), 355--365.
 
  \bibitem{GHSV2}
L. Gargano, M. Hammar, P. Hell, L. Stacho, and U. Vaccaro, Spanning spiders and light splitting switches, Discrete Math., 285 (2004), pp. 83--95. 
 
 \bibitem{H}
J. van den Heuvel, Long cycles in graphs with large degree sums and neighborhood unions, J Graph Theory 21 (1996), 87--102. 


\bibitem{keyring2} X. Hou and X. Xue, An Erd\H{o}s--Gallai-type theorem for keyrings with larger number of leaves,  {\tt arXiv:1910.02209},  7 p, 2019.


\bibitem{BJ}
B. Jackson, Hamilton Cycles in Regular 2-Connected Graphs, J Combin Theory Ser B, 29(1980), 27--46.


\bibitem{L}
N. Linial, A lower bound for the circumference of a graph, Discrete Math. 15 (1976) pp. 297--300.


\bibitem{keyring3}
Z. Ni, L. Kang, E. Shan, and H. Zhu, Extremal graphs for blow-ups of keyrings.
Graphs Combin. 36 (2020),  1827--1853.


\bibitem{O1}
O. Ore, A note on Hamilton circuits, Amer Math Monthly 67 (1960), 55.  

\bibitem{O2}
O. Ore, Hamiltonian-connected graphs, J Math Pures Appl 42 (1963), 21--27. 

\bibitem{OY}
K. Ozeki and T. Yamashita, Spanning trees: A survey, Graphs Combin. 27 (2011), 1--26.  


\bibitem{keyring1}
A. Sidorenko, An Erd\H{o}s--Gallai-type theorem for keyrings.
Graphs Combin. 34 (2018),  633--638.


\bibitem{Win}
S. Win, Existenz von Ger\" usten mit vorgeschriebenem Maximalgrad in Graphen, In Abh.
Math. Sem. Univ. Hamburg, 43, no. 1 (1975),  263--267.

\bibitem{W}
D. R. Woodall, The binding number of a graph and its Anderson number. J Combin Theory Ser B 15 (1973), 225--255.  
    
%
%\bibitem{Jung} H. A. Jung:
%On maximal circuits in finite graphs.
%\emph{Ann.~Discrete Math.} {\bf 3} (1978), 129--144.


%\bibitem{Posa}   L. P\' osa: A theorem concerning Hamilton lines, \emph{Magyar Tud. Akad. Mat. Kutat\'o Int. K\"ozl.} {\bf 7} (1962), 225--226.
%


\end{thebibliography}
  \end{document}